\theoremstyle{plain}
\newtheorem{thrm}{Theorem}[section]
\newtheorem*{thrm*}{Theorem}
\newtheorem{lemma}[thrm]{Lemma}
\newtheorem{prop}[thrm]{Proposition}
\newtheorem{cor}[thrm]{Corollary}
\theoremstyle{definition}
\newtheorem{dfn}[thrm]{Definition}
\theoremstyle{remark}
\newtheorem{rmrk}[thrm]{Remark}
\theoremstyle{example}
\numberwithin{equation}{section}
\begin{document}

\newcommand{\ddelta}{\delta}

\newcommand{\tx}{\tilde x}
\newcommand{\R}{\mathbb R}
\newcommand{\N}{\mathbb N}
\newcommand{\C}{\mathbb C}
\newcommand{\lie}{\mathcal G}
\newcommand{\hN}{\mathcal N}
\newcommand{\D}{\mathcal D}
\newcommand{\A}{\mathcal A}
\newcommand{\B}{\mathcal B}
\newcommand{\sL}{\mathcal L}
\newcommand{\sLi}{\mathcal L_{\infty}}

\newcommand{\G}{\Gamma}
\newcommand{\x}{\xi}

\newcommand{\eps}{\epsilon}
\newcommand{\al}{\alpha}
\newcommand{\be}{\beta}
\newcommand{\p}{\partial}  
\newcommand{\lig}{\mathfrak}

\def\dist{\mathop{\varrho}\nolimits}

\newcommand{\BCH}{\operatorname{BCH}\nolimits}
\newcommand{\Lip}{\operatorname{Lip}\nolimits}
\newcommand{\Hol}{C}                             
\newcommand{\lip}{\operatorname{lip}\nolimits}
\newcommand{\capQ}{\operatorname{Cap}\nolimits_Q}
\newcommand{\pCap}{\operatorname{Cap}\nolimits_p}
\newcommand{\Om}{\Omega}
\newcommand{\om}{\omega}
\newcommand{\half}{\frac{1}{2}}
\newcommand{\e}{\varepsilon}
\newcommand{\vn}{\vec{n}}
\newcommand{\X}{\Xi}
\newcommand{\tLip}{\tilde  Lip}

\newcommand{\Span}{\operatorname{span}}

\newcommand{\ad}{\operatorname{ad}}
\newcommand{\Hm}{\mathbb H^m}
\newcommand{\Hn}{\mathbb H^n}
\newcommand{\Hone}{\mathbb H^1}
\newcommand{\Lie}{\mathfrak}
\newcommand{\Layer}{V}
\newcommand{\hgrad}{\nabla_{\!H}}
\newcommand{\im}{\textbf{i}}
\newcommand{\nz}{\nabla_0}
\newcommand{\s}{\sigma}
\newcommand{\se}{\sigma_\e}

\newcommand{\ued}{u^{\e,\ddelta}}
\newcommand{\ueds}{u^{\e,\ddelta,\sigma}}
\newcommand{\tnabla}{\tilde{\nabla}}

\newcommand{\bx}{\bar x}
\newcommand{\by}{\bar y}
\newcommand{\bt}{\bar t}
\newcommand{\bs}{\bar s}
\newcommand{\bz}{\bar z}
\newcommand{\btau}{\bar \tau}

\newcommand{\LC}{\mbox{\boldmath $\nabla$}}
\newcommand{\Ne}{\mbox{\boldmath $n^\e$}}
\newcommand{\nuo}{\mbox{\boldmath $n^0$}}
\newcommand{\nuu}{\mbox{\boldmath $n^1$}}
\newcommand{\nue}{\mbox{\boldmath $n^\e$}}
\newcommand{\nuek}{\mbox{\boldmath $n^{\e_k}$}}
\newcommand{\dse}{\nabla^{H\Su, \e}}
\newcommand{\dso}{\nabla^{H\Su, 0}}
\newcommand{\tX}{\tilde X}

\newcommand\red{\textcolor{red}}
\newcommand\green{\textcolor{green}}

\newcommand{\Xie}{X^\epsilon_i}
\newcommand{\Xje}{X^\epsilon_j}
\newcommand{\Su}{\mathcal S}
\newcommand{\F}{\mathcal F}

\dedicatory{Questa nota \`e dedicata a Sandro Salsa, con grande affetto e ammirazione}

\title[Regularity for a class of quasilinear, etc.]{Regularity for a class of quasilinear degenerate parabolic equations in the Heisenberg group}

\author[L. Capogna]{L. Capogna}
\address{Luca Capogna\\Department of Mathematical Sciences, Worcester Polytechnic Institute, MA 01609\\
}

\author[G.  Citti]{G.  Citti}\address{Dipartimento di Matematica, Piazza Porta S. Donato 5, Universit\`a di Bologna, 
40126 Bologna, Italy}\email{giovanna.citti@unibo.it}

\author[N. Garofalo]{N. Garofalo}\address{Dipartimento di Ingegneria Civile e Ambientale (DICEA), Universit\`a di Padova, 35131 Padova, Italy}\email{rembrandt54@gmail.com}

\keywords{sub elliptic $p$-Laplacian, parabolic gradient estimates, Heisenberg group}

\thanks{
The first author was partially funded by NSF awards  DMS 1101478, and by a Simons collaboration grant for mathematicians 585688}

\thanks{The second author was partially funded by  
Horizon 2020 Project ref. 777822: GHAIA, and by 
PRIN 2015 Variational and perturbative aspects of nonlinear differential problems}

\thanks{The third author was supported in part by a Progetto SID (Investimento Strategico di Dipartimento) ``Non-local operators in geometry and in free boundary problems, and their connection with the applied sciences", University of Padova, 2017.}

\begin{abstract}
We extend to the parabolic setting 
some of the ideas originated with Xiao Zhong's proof in \cite{Zhong} of  the H\"older
regularity of $p-$harmonic functions in the Heisenberg group $\Hn$. Given a number $p\ge 2$, in this paper we establish the $C^{\infty}$ smoothness of weak solutions of quasilinear pde's in $\Hn$ modelled on the equation
$$\p_t u= \sum_{i=1}^{2n} X_i \bigg((1+|\nabla_0 u|^2)^{\frac{p-2}{2}} X_i u\bigg).$$
\end{abstract}

\maketitle


\section{Introduction}

In this paper we establish the  $C^\infty$ smoothness of solutions of a certain class of quasilinear parabolic equations in the Heisenberg group $\Hn$. In a cylinder $Q=\Omega\times (0,T)$, where $\Omega\subset\Hn$ is an open set and $T>0$, we consider the equation
\begin{equation}\label{maineq}
\p_t u = \sum_{i=1}^{2n} X_i A_i(x, \nabla_0 u) \quad \quad  \text{ in  \quad }Q=\Omega\times (0,T),
\end{equation}
modeled on the regularized parabolic $p$-Laplacian 
\begin{equation}\label{plap-para} \p_t u= \sum_{i=1}^{2n} X_i \bigg((1+|\nabla_0 u|^2)^{\frac{p-2}{2}} X_i u\bigg),\end{equation}
where $p\ge 2$. The term {\it regularized} here refers to the fact that the non-linearity $(1+|\nabla_0 u|^2)^{\frac{p-2}{2}}$  affects the ellipticity of the right hand side only when the gradient blows up, and not when it vanishes, thus presenting a weaker version of the singularity in the $p-$Laplacian.
 Here, we indicate with $x=(x_1,...,x_{2n},x_{2n+1})$ the variable point in $\Hn$. We alert the reader that, although it is customary to denote the variable $x_{2n+1}$ in the center of the group with the letter $t$, we will be using $z$ instead, since we have reserved the letter $t$ for the time variable. Consequently, we will indicate with $\p_i$ partial differentiation with respect to the variable $x_i$, $i=1,...,2n$, and use the notation $Z = \p_z$ for the partial derivative $\p_{x_{2n+1}}$. The notation $\nabla_0 u \cong (X_1 u,...,X_{2n} u)$ represents the so-called \emph{horizontal gradient} of the function $u$, where
$$X_i = \partial_i - \frac{x_{n+i}}2 \partial_{z},\ \ \ \ \ \   
X_{n+i} = \partial_{n+i} + \frac{x_{i}}{2} \partial_{z},\ \ \ \  i=1,...,n.$$
As it is well-known, the $2n+1$ vector fields $X_1,...,X_{2n}, Z$ are connected by the commutation relation $[X_i,X_j] = \delta_{ij} Z$, all other commutators being trivial. 

We now introduce the relevant structural assumptions on the vector-valued function $(x,\xi)\to A(x,\xi)=(A_1(x,\xi),...,A_{2n}(x,\xi))$: there exist $p\ge 2$, $\ddelta>0$ and  $0<\lambda\le \Lambda <\infty$ such that for a.e. $x\in\Omega, \xi\in \R^{2n}$ and for all $\eta\in \R^{2n}$, one has
\begin{equation}\label{structure}
\begin{cases}
 \lambda (\ddelta+|\xi|^2)^{\frac{p-2}{2}} |\eta|^2 \le  \p_{\xi_j} A_i(x,\xi) \eta_i \eta_j \le \Lambda (\ddelta+|\xi|^2)^{\frac{p-2}{2}} |\eta|^2,
 \\
|A_i(x,\xi)| +  |\p_{x_j} A_i(x,\xi)| \le  \Lambda (\ddelta+|\xi|^2)^{\frac{p-1}{2}}.
\end{cases}
\end{equation}
Given an open set $\Omega\subset \Hn$, we indicate with $W^{1,p}(\Om)$ the Sobolev space associated with the $p$-energy $\mathscr E_{\Om,p}(u) = \frac 1p \int_{\Omega}|\nabla_0 u|^p$, i.e., the space of all functions $u\in L^p(\Omega)$ such that their distributional derivatives $X_i u$, $i=1,...,2n,$ are also in $L^p(\Omega)$. The corresponding norm
is $||u||^p_{W^{1,p}(\Omega)} = ||u||_{L^p(\Omega)}+||\nabla_0 u||_{L^p(\Omega)}.$ We denote by $W^{1,p}_0(\Om)$ the completion of $C^\infty_0(\Om)$ with respect to such norm. A function $u\in L^p((0,T), W_0^{1,p}(\Om))$
is a weak solution of \eqref{maineq} if
\begin{equation}\label{weak}
\int_0^T \int_\Omega u \phi_t\  dx dt - \int_0^T \int_\Omega \sum_{i=1}^{2n} A_i(x,\nabla_0 u) X_i \phi \ dx dt =0,
\end{equation}
for every $\phi\in C^{\infty}_0(Q)$. 
Our main result is the following.
\begin{thrm}\label{main1}  
Let $A_i$ satisfy the structure conditions \eqref{structure} for some $p\ge 2$ and $\ddelta>0$. We also assume that \eqref{maineq} can be approximated as in \eqref{structure2}-\eqref{approx1} below.
Let $u\in L^p((0,T), W_0^{1,p}(\Om))$ be a weak solution of \eqref{maineq}  in $Q=\Om\times (0,T)$.
 For any open ball $B\subset \subset \Om$ and $T>t_2\ge t_1\ge 0$, 
there exist  constants $C=C(n, p, \lambda, \Lambda, d(B, \p\Om), T-t_2, \ddelta)>0$  and 
$\al=\al (n, p, \lambda, \Lambda, d(B, \p\Om), T-t_2, \ddelta) \in (0,1)$ such that
\begin{equation}\label{c1alpha}
||\nabla_0 u ||_{C^{\alpha} (B\times (t_1,t_2) )} + ||Zu||_{C^{\alpha} (B\times (t_1,t_2) )}\le C
\bigg(\int_0^T \int_\Om (\ddelta+|\nabla_0 u|^2)^{\frac{p}{2}} dx dt \bigg)^{\frac{1}{p}} .
\end{equation}
\end{thrm}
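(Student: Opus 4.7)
My plan is to follow Zhong's elliptic strategy from \cite{Zhong} and adapt each step to the parabolic setting, proceeding along four stages: regularize, differentiate, estimate, and iterate. First I would consider the smooth approximating solutions $\ued$ furnished by the regularization scheme \eqref{structure2}--\eqref{approx1}. These satisfy a uniformly parabolic problem (with ellipticity constants that degenerate as the approximation parameters tend to zero) and are therefore $C^\infty$ in $Q$ by classical Schauder theory. The task then reduces to proving \eqref{c1alpha} for $\ued$ with constants independent of the regularization parameters, so that one can pass to the limit at the end.

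Next I would differentiate the approximating equation along the horizontal and vertical directions. Writing $v_k = X_k \ued$ and $w = Z \ued$, each $v_k$ and $w$ satisfies a linear parabolic equation whose principal coefficient is the matrix $\p_{\xi_j}A_i(x,\nabla_0 \ued)$, together with lower-order commutator contributions arising from the Heisenberg bracket relations. These brackets provide the essential two-way coupling between horizontal and vertical regularity: the equation for $v_k$ acquires an inhomogeneity proportional to $w$, while conversely $Z\ued = X_iX_{n+i}\ued-X_{n+i}X_i\ued$ expresses the vertical derivative in terms of horizontal second derivatives. Testing these differentiated equations against $\phi^2(\ddelta+|\nabla_0\ued|^2)^{\beta/2}v_k$ and $\phi^2 w$ respectively, with $\phi$ a parabolic cut-off and $\beta$ an adjustable exponent, and handling $\p_t$ via Steklov averages, I would extract a coupled system of Caccioppoli-type inequalities for these two families.

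With these Caccioppoli estimates in hand, a Moser iteration on parabolic sub-cylinders applied to $(\ddelta+|\nabla_0 \ued|^2)^{1/2}+|Z\ued|$ produces a uniform $L^\infty$ bound on the horizontal and vertical first derivatives, controlled by the right-hand side of \eqref{c1alpha}. Because $\ddelta>0$, this $L^\infty$ bound reduces the original equation to a uniformly parabolic one with bounded smooth coefficients, so the Hölder estimate for $\nabla_0 \ued$ and $Z\ued$ follows from a De Giorgi / Moser--Harnack oscillation decay argument adapted to the sub-Riemannian parabolic cylinders. Passing to the limit in the regularization parameters then yields \eqref{c1alpha} for the original weak solution $u$.

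The principal obstacle will be closing the Caccioppoli loop between $v_k$ and $w$ with constants independent of the regularization. In the elliptic case Zhong closes this loop through a delicate balance of exponents in a Cauchy--Schwarz inequality that exploits the commutator identity $Zu=X_iX_{n+i}u-X_{n+i}X_i u$. The parabolic setting is considerably more fragile: the natural $L^\infty_t L^2_x$ functional framework and the need for cut-offs that are simultaneously spatial and temporal make the weighted exponent balance sensitive, and controlling the time-boundary terms that arise when one differentiates in $t$ and uses $\p_t \ued$ as a test quantity is where the bulk of the technical work will lie. Once this balance is achieved, the remaining Moser and De Giorgi steps should follow along now-standard sub-elliptic lines.
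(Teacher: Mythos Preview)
Your outline matches the paper's strategy closely: the paper also regularizes via the Riemannian approximants $u^\e$, differentiates to obtain the linear equations for $X_\ell u^\e$ and $Zu^\e$ (Lemmas~\ref{diff-PDE} and~\ref{eqZ}), derives the coupled Caccioppoli inequalities you describe (Section~\ref{S:cacc}), and runs a Moser iteration to obtain the uniform Lipschitz bound (Theorem~\ref{moser}). Two points where the paper diverges slightly from your sketch are worth flagging. First, for the final H\"older step the paper does \emph{not} run a direct De~Giorgi oscillation argument on $\nabla_\e u^\e$; instead it proceeds in two stages: it first establishes H\"older regularity of $Zu^\e$ (which, once $|\nabla_\e u^\e|$ is bounded and $\delta>0$, satisfies a linear uniformly parabolic divergence-form equation, so the subelliptic Harnack inequality applies), and then feeds the resulting Morrey-space control on $|\nabla_\e Zu^\e|$ into the equation for $X_\ell u^\e$ and invokes a Morrey--Campanato argument (Lemma~\ref{morreycampanato}). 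This two-step route is taken precisely because the equation for $v_\ell=X_\ell u^\e$ contains the non-divergence term $s_\ell Z(A_{\ell+s_\ell n}(x,\nabla_\e u^\e))$, which a naive De~Giorgi iteration would not absorb without prior information on $\nabla_\e Zu^\e$. Second, your final ``passing to the limit'' hides a genuine step: the paper solves the $\e$-problem with parabolic boundary data $u^\e=u$, extracts via Ascoli--Arzel\`a a subsequence $u^{\e_k}\to u_0\in C^{1,\alpha}_{\mathrm{loc}}$, and then invokes the comparison principle (Lemma~\ref{comparison}) to identify $u_0$ with the original weak solution $u$. Without this identification the a~priori estimates on $u^\e$ do not automatically transfer to $u$.
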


Besides the structural hypothesis \eqref{structure}, Theorem \ref{main1} will be established under an additional technical approximating assumption. Namely, for $\e\ge 0$ we consider the left-invariant Riemannian metric $g_\e$ in $\Hn$ in which the frame defined by $X^\e_1 = X_1,...,X^\e_{2n}= X_{2n}, X_{2n+1}^\e = \e Z$ is orthonormal, and denote by $\nabla_\e$ the gradient in such metric. We will adopt the unconventional notation $W^{1,p,\e}(\Om)$ to indicate the Sobolev space associated with the $p$-energy $\mathscr E_{\Om,p,\e}(u) = \frac 1p \int_{\Omega}|\nabla_\e u|^p$. We assume that one can approximate $A_i$ by  a $1$-parameter family of regularized approximants $A^\e(x,\xi)=(A_1^\e(x,\xi),...,A_{2n+1}^\e(x,\xi))$  defined for a.e. $x\in \Om$ and every $\xi \in \R^{2n+1}$, and such that 
 for a.e. $x\in\Omega, $ and for all $\xi = (\xi_1,...,\x_{2n},\xi_{2n+1})\in \R^{2n+1}$  one has uniformly on compact subsets of $\Om \times (0,T)$,
\begin{equation}\label{structure2}
(A_1^\e(x,\xi),...,A_{2n+1}^\e(x,\xi))\ \underset{\e\to 0^+}{\longrightarrow}\    (A_1(x, \xi_1,...,\x_{2n}), ..., A_{2n} (x, \xi_1,...,\x_{2n}),0),
\end{equation}
and furthermore
\begin{equation}\label{structure-epsilon}
\begin{cases}
\lambda (\ddelta+|\xi|^2)^{\frac{p-2}{2}} |\eta|^2 \le  \p_{\xi_j} A_i^\e(x,\xi) \eta_i \eta_j \le \Lambda (\ddelta+|\xi|^2)^{\frac{p-2}{2}} |\eta|^2, 
\\
|A_i^\e(x,\xi)| +  |\p_{x_j} A_i^\e(x,\xi)| \le  \Lambda (\ddelta+|\xi|^2)^{\frac{p-1}{2}},
\end{cases}
\end{equation}
 for all $\eta\in \R^{2n+1}$, and 
 for some $0<\lambda\le \Lambda <\infty$ independent of $\e$.
The proof of the $C^{1,\alpha}$  regularity in Theorem \ref{main1} is based on a priori estimates for solutions of the one-parameter family of regularized partial differential equations which approximate \eqref{maineq} as the parameter $\e\to 0$. The key will be in establishing estimates that do not degenerate as $\e\to 0$. Specifically, for any $\e>0$ we will consider a weak solution $u^\e$ to  the equation
\begin{equation}\label{approx1}
\p_t u^\e = \sum_{i=1}^{2n+1} X^\e_i A_i^\e (x, \nabla_\e u^\e)
\end{equation}
in a cylinder $Q_0=B(x_0,R_0)\times (t_0,t_1)$, with  $B(x_0,R_0)\subset \Omega$ and $(t_0,t_1)\subset (0,T)$,  and  with (parabolic) boundary data $u^\e=u$. Since \eqref{approx1} is strongly parabolic for every $\e>0$, the solutions $u^\e$ are smooth in every compact subset $K\subset Q_0$ and, in view of the comparison principle, and of the uniform Harnack inequality established in \cite{ACCN},  converge uniformly on compact subsets   to  a function $u_0$. The bulk of the paper consists in establishing  higher regularity estimates for $u_\e$ that are uniform in $\e>0$, to show that $u_0$ inherits such higher regularity and is a solution of \eqref{maineq}, thus it coincides with $u$. Here is our main result in this direction.

\begin{thrm}\label{main epsilon}
In the hypothesis  \eqref{structure2}, \eqref{structure-epsilon},
consider  for each $\e>0$ a weak solution $u^\e\in L^p ((0,T), W^{1,p,\e}(\Om))\cap C^2(Q)$  of the approximating equation \eqref{approx1} in $Q$.
For any open ball $B\subset \subset \Om$ and $T>t_2\ge t_1\ge 0$
there exists a constant $C=C(n, p, \lambda, \Lambda, d(B, \p\Om), T-t_2, \ddelta)>0$, such that
\begin{equation}\label{mainestepsilon}
||\nabla_\e u ^\e ||_{L^\infty(B\times (t_1,t_2) )}^{ p}+ \int_{t_1}^{t_2} \int_B (\ddelta+|\nabla_\e u^\e
|^2)^{\frac{p-2}{2}} \sum_{i,j=1}^{2n} |X^\e_i X^\e_j u^\e |^2 dx dt \le C \int_0^T \int_\Om (\ddelta+|\nabla_\e u^\e|^2)^{\frac{p}{2}} dx dt.
\end{equation}
Moreover, for any open ball $B\subset \subset \Om$ and $T>t_2\ge t_1\ge 0$, 
there exist  constants $C>0$ and  $\al \in (0,1)$, which depend on $n, p, \lambda, \Lambda, d(B, \p\Om), T-t_2, \ddelta$, such that
\begin{equation}\label{c1alpha0}
||\nabla_\e u^\e ||_{C^{\alpha} (B\times (t_1,t_2) )} + ||Zu^\e||_{C^{\alpha} (B\times (t_1,t_2) )}\le C
\bigg(\int_0^T \int_\Om (\ddelta+|\nabla_\e u^\e|^2)^{\frac{p}{2}} dx dt \bigg)^{\frac{1}{p}}.
\end{equation}
\end{thrm}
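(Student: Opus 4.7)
The strategy is to adapt Zhong's approach \cite{Zhong} from the stationary to the parabolic setting, working throughout on the regularized equation \eqref{approx1}. Since \eqref{approx1} is uniformly parabolic in the Riemannian metric $g_\e$, its solutions $u^\e$ are smooth for each fixed $\e>0$; the whole point is to produce a priori estimates whose constants depend only on $n,p,\lambda,\Lambda,\ddelta$ and the geometry, but not on $\e$.

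First, I would differentiate \eqref{approx1} in the direction $X_k^\e$, obtaining for each $k=1,\dots,2n+1$ a linear divergence-form evolution equation for $v_k := X_k^\e u^\e$. For $k\le 2n$ the commutators $[X_i^\e, X_k^\e]$ introduce an unavoidable contribution proportional to $Zu^\e = \e^{-1}X_{2n+1}^\e u^\e$. Testing this equation against $\eta^{2q}(\ddelta+|\nabla_\e u^\e|^2)^{\beta} v_k$ for a parabolic cutoff $\eta$ and summing over $k\le 2n$ yields a first Caccioppoli inequality that controls both the time-supremum of $\int \eta^{2q}(\ddelta+|\nabla_\e u^\e|^2)^{(p+2\beta)/2}\,dx$ and the spacetime integral of $\eta^{2q}(\ddelta+|\nabla_\e u^\e|^2)^{(p-2+2\beta)/2}\sum_{i,j}|X^\e_iX^\e_ju^\e|^2$, modulo a remainder term involving $Zu^\e$. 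Since $Z$ commutes with every $X_i$, differentiating \eqref{approx1} in the $Z$-direction produces a clean divergence-form equation for $Zu^\e$, from which, after testing against $\eta^{2q}(\ddelta+|\nabla_\e u^\e|^2)^{\gamma}Zu^\e$, I would extract a companion Caccioppoli inequality for $Zu^\e$.

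The main obstacle, exactly as in \cite{Zhong}, is to absorb the $Zu^\e$ remainder of the horizontal Caccioppoli. This requires a delicate choice of the exponents $\beta$ and $\gamma$ and a coupling of the two families of inequalities through the sub-Riemannian Sobolev embedding, so that the $Zu^\e$ norm on the right is dominated by a fractional power of quantities already controlled on the left. Iterating the resulting coupled inequality in Moser style, in its parabolic version that blends supremum in time with $L^q$ in space, produces the $\e$-uniform $L^\infty$ bound on $\nabla_\e u^\e$ stated in \eqref{mainestepsilon}; setting $\beta=0$ once the sup-bound is in place yields the claimed weighted integral estimate for $X_i^\e X_j^\e u^\e$.

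For the Hölder estimate \eqref{c1alpha0}, once $\|\nabla_\e u^\e\|_{L^\infty}$ is bounded independently of $\e$, the equations satisfied by $v_k$ and by $Zu^\e$ are linear parabolic equations with bounded measurable coefficients whose ellipticity constants depend only on $\ddelta$ and on the sup-bound, not on $\e$. I would then invoke the parabolic De Giorgi--Nash--Moser theory (in the DiBenedetto form adapted to the sub-Riemannian frame, in the spirit of the elliptic arguments of \cite{Zhong}), applied to $v_k$ and to $Zu^\e$, to produce the uniform Hölder bound. The explicit right-hand side $\bigl(\int_0^T\int_\Om(\ddelta+|\nabla_\e u^\e|^2)^{p/2}\bigr)^{1/p}$ is recovered by tracking the homogeneity through the Moser iteration together with Hölder's inequality on the energy.
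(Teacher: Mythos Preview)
Your outline for the Lipschitz bound \eqref{mainestepsilon} is correct in spirit but is vague at the decisive step. The paper does not couple the horizontal and vertical Caccioppoli inequalities ``through the sub-Riemannian Sobolev embedding''; following Zhong, it instead derives a \emph{mixed} Caccioppoli inequality (Lemma~\ref{cacciopoXZ}) by testing the equation for $X_\ell^\e u^\e$ with $\eta^{\beta+2}|Zu^\e|^\beta X_\ell^\e u^\e$ and \emph{simultaneously} testing the equation for $Zu^\e$ with $\beta\eta^{\beta+2}|Zu^\e|^{\beta-2}Zu^\e\,|\nabla_\e u^\e|^2$, so that the two time derivatives recombine into $\partial_t\bigl(|\nabla_\e u^\e|^2|Zu^\e|^\beta\bigr)$. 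This iterates down in $\beta$ (Corollary~\ref{CCGMc2.7}), and then a H\"older-inequality trick (Theorem~\ref{CCGMt2.8}) absorbs the $|Zu^\e|^2$ remainder of Lemma~\ref{lemma3.4}. A plain ``Sobolev coupling'' of two separate Caccioppoli inequalities does not obviously achieve this, because Sobolev on $Zu^\e$ does not see the weight $(\ddelta+|\nabla_\e u^\e|^2)^{(p-2+\beta)/2}$ sitting in the remainder.

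There is a genuine gap in your argument for \eqref{c1alpha0}. You assert that once $\|\nabla_\e u^\e\|_{L^\infty}$ is bounded, the equation satisfied by $v_k=X_k^\e u^\e$ is linear parabolic with bounded measurable coefficients, so that De~Giorgi--Nash--Moser applies directly. This is not true: the commutator in Lemma~\ref{diff-PDE} leaves behind the non-divergence source term $s_\ell\,Z\bigl(A^\e_{\ell+s_\ell n}(x,\nabla_\e u^\e)\bigr)$, which by the chain rule is controlled only by $|\nabla_\e Zu^\e|$. After the Caccioppoli stage this lies merely in $L^2_{\mathrm{loc}}$, and an $L^2$ source is \emph{not} enough for H\"older continuity when the parabolic homogeneous dimension $N_1=2n+4$ exceeds $4$. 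The paper repairs this in two stages. First, since $Zu^\e$ satisfies a clean divergence-form equation with bounded coefficients and bounded divergence-form source (Lemma~\ref{eqZ}), the Harnack inequality of \cite{CCR,ACCN} yields $Zu^\e\in C^\alpha$ with $\e$-independent constants (Proposition~\ref{Zu-reg}); a Caccioppoli argument applied to $Zu^\e-Zu^\e(x_0,t_0)$ then upgrades $\nabla_\e Zu^\e$ from $L^2$ to the parabolic Morrey class $M^{2,\alpha/2}$, see \eqref{Zu in Morrey}. Second, with the source term of \eqref{eq-X} now in a Morrey class, the H\"older regularity of $X_\ell^\e u^\e$ is obtained not through DGNM but through Morrey--Campanato theory (Lemma~\ref{morreycampanato}), which places $\nabla_\e X_\ell^\e u^\e$ in $M^{2,\lambda}$ and hence $X_\ell^\e u^\e$ in $C^\lambda$ via Poincar\'e and the Campanato isomorphism (Lemma~\ref{campanato-holder}). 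Your plan skips this bootstrap and would stall at $v_k\in L^\infty$.
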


We emphasise that the constants in \eqref{mainestepsilon} and \eqref{c1alpha0} are independent of $\e$.

\bigskip

It is worth mentioning here that the prototype for the class of equations \eqref{maineq} and for their parabolic approximation comes from considering the regularized $p-$Laplacian operator $L_p u= \text{div}_{g_0,\mu_0}( (\ddelta+|\nabla_0 u|_{g_0}^2)^{\frac{p-2}{2}} \nabla_0 u)$ in a sub-Riemannian contact manifold $(M,\omega,g_0)$, where $M$ is the underlying differentiable manifold, $\omega$ is the contact form and $g_0$ is a Riemannian metric on the contact distribution. The measure $\mu_0$ is the corresponding Popp measure. The approximants are constructed through Darboux coordinates, considering the $p-$Laplacians associated to a family of Riemannian metrics $g_\e$ that tame $g_0$ and such that the metric structure of the spaces $(M,g_\e)$ converge in the Gromov-Hausdorff sense to the metric structure of $(M, \omega,  g_0)$. For a more detailed description, see \cite[Section 6.1]{CCLDO}.  As an immediate corollary of Theorem \ref{main1} one has the following.

\begin{thrm}\label{main3}  Let $(M,\omega,g_0)$ be a contact, sub-Riemannian manifold and let $\Om\subset M$ be an open set. For $p\ge 2$, 
consider $u\in L^p((0,T), W_0^{1,p}(\Om))$ be a weak solution of 
$$\p_t u =\operatorname{div}_{g_0,\mu_0}( (\ddelta+|\nabla_0 u|_{g_0}^2)^{\frac{p-2}{2}} \nabla_0 u) ,$$ in $Q=\Om\times (0,T)$.
 For any open ball $B\subset \subset \Om$ and $T>t_2\ge t_1\ge 0$, 
there exist  constants $C=C(n, p, d(B, \p\Om), T-t_2, \ddelta)>0$  and 
$\al=\al (n, p,  d(B, \p\Om), T-t_2, \ddelta) \in (0,1)$ such that
\begin{equation}\label{c1alpha}
||\nabla_0 u ||_{C^{\alpha} (B\times (t_1,t_2) )} + ||Zu||_{C^{\alpha} (B\times (t_1,t_2) )}\le C
\bigg(\int_0^T \int_\Om (\ddelta+|\nabla_0 u|^2)^{\frac{p}{2}} dx dt \bigg)^{\frac{1}{p}} .
\end{equation}
\end{thrm}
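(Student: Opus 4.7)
The plan is to reduce Theorem \ref{main3} to Theorem \ref{main1} by working in local Darboux coordinates, exploiting the fact that the statement is purely local (an interior estimate on relatively compact balls $B\subset\subset \Om$). Every point of $M$ has an open neighborhood $U\subset \Om$ equipped with Darboux coordinates in which the contact form $\omega$ coincides with the standard contact form of $\Hn$; consequently, the horizontal distribution is spanned by the standard Heisenberg vector fields $X_1,\dots,X_{2n}$, the sub-Riemannian metric $g_0$ on the distribution is encoded by a smooth symmetric positive definite matrix $G(x)=(g_{ij}(x))$, and the Popp measure satisfies $\mu_0=\rho(x)\,dx$ for some smooth positive $\rho$. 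A direct computation then yields, on $U$,
\begin{equation*}
\operatorname{div}_{g_0,\mu_0}\!\left((\ddelta+|\nabla_0 u|_{g_0}^2)^{\frac{p-2}{2}}\nabla_0 u\right) \;=\; \frac{1}{\rho(x)}\sum_{i=1}^{2n} X_i\!\bigl(\rho(x)\,A_i(x,\nabla_0 u)\bigr),
\end{equation*}
where $A_i(x,\xi) = (\ddelta + g^{k\ell}(x)\xi_k\xi_\ell)^{\frac{p-2}{2}}\,g^{ij}(x)\xi_j$. After multiplying the equation through by $\rho$, I would absorb $\rho$ into the coefficients and check that the resulting $\tilde A_i(x,\xi)=\rho(x)A_i(x,\xi)$ satisfies the structural hypotheses \eqref{structure} on $U$, with constants $\lambda,\Lambda$ depending only on the bounds for $G,G^{-1}$ and $\rho$ on $\bar U$.

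Next, I would produce the Riemannian approximants required by \eqref{structure2}--\eqref{approx1}. Following the construction in \cite[Section 6.1]{CCLDO}, choose a family of Riemannian metrics $g_\e$ on $M$ taming $g_0$, such that $(M,g_\e)$ converges to $(M,\omega,g_0)$ in the Gromov--Hausdorff sense; in Darboux coordinates the orthonormal frame $(X_1^\e,\dots,X_{2n+1}^\e)$ for $g_\e$ can be chosen exactly as in the body of the paper, i.e.\ $X_i^\e = X_i$ for $i\le 2n$ and $X_{2n+1}^\e = \e Z$. Writing the corresponding regularized $p$-Laplacians $\operatorname{div}_{g_\e,\mu_\e}((\ddelta+|\nabla_\e u|_{g_\e}^2)^{(p-2)/2}\nabla_\e u)$ in Darboux coordinates yields operators of the form \eqref{approx1} whose coefficients $A_i^\e(x,\xi)$ satisfy \eqref{structure-epsilon} uniformly in $\e$ and converge to the coefficients of the limit operator as in \eqref{structure2}; this is exactly what the taming condition delivers.

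Once the operator and its approximants have been put in the form \eqref{maineq}, \eqref{approx1} on $U$ and the uniform structural bounds verified, Theorem \ref{main1} applies directly on any $B'\subset\subset U\cap \Om$ and furnishes the estimate \eqref{c1alpha} locally, with constants depending only on $n$, $p$, $d(B',\p \Om)$, $T-t_2$, $\ddelta$ (the parameters $\lambda,\Lambda$ being absorbed because they are now intrinsic to the contact manifold and the fixed coordinate patch). Covering the given ball $B\subset\subset \Om$ by finitely many such Darboux charts and taking the maximum of the resulting constants yields \eqref{c1alpha} globally on $B\times (t_1,t_2)$.

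The only non-routine step is checking that the construction of \cite[Section 6.1]{CCLDO} indeed produces, in Darboux coordinates, approximants satisfying the structural conditions \eqref{structure2}--\eqref{structure-epsilon} with constants independent of $\e$; all other ingredients are coordinate computations and a standard covering argument. I would therefore expect the main expository effort to be in making the transition between the intrinsic formulation on $(M,\omega,g_0)$ and the extrinsic Heisenberg formulation used in Theorem \ref{main1} fully transparent, after which the corollary follows.
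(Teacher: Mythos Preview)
Your approach is precisely what the paper intends: it states Theorem \ref{main3} as ``an immediate corollary of Theorem \ref{main1}'' with no proof, pointing to the Darboux-coordinate construction of the taming metrics $g_\e$ in \cite[Section 6.1]{CCLDO} as the bridge between the intrinsic contact-manifold formulation and the Heisenberg model \eqref{maineq}--\eqref{approx1}. Your outline (pass to Darboux charts, verify \eqref{structure} and \eqref{structure-epsilon} for the resulting coefficients, invoke Theorem \ref{main1}, cover) is exactly this.

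One wrinkle worth flagging: after multiplying through by the Popp density $\rho(x)$ you obtain $\rho\,\p_t u=\sum_i X_i\tilde A_i$, not $\p_t u=\sum_i X_i\tilde A_i$, so the equation does not literally fall under the hypotheses of Theorem \ref{main1} as stated. This is harmless for the underlying estimates---a smooth, time-independent, positive weight on $\p_t u$ changes none of the Caccioppoli or Moser arguments in Sections \ref{S:cacc}--\ref{S:hold}---but strictly speaking you would either need to note that the proof of Theorem \ref{main1} carries a bounded positive weight $\rho(x)$ on the time derivative without modification, or appeal to the more general framework already worked out in \cite{CCLDO}. The paper glosses over this point as well.
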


The $C^{1,\al}$ estimates in \eqref{c1alpha0} in Theorem \ref{main epsilon}  allow us to apply the Schauder theory developed in \cite{bramanti-brandolini, xu}, and finally deduce the following result.

\begin{thrm}\label{main}   
Let $k\in \N$ and $\al\in (0,1)$. If $A_i(x,\xi), \p_{x_k} A_i(x,\xi), \p_{\xi_j} A_i(x,\xi) \in C^{k,\al}_{loc}$ satisfy the structure conditions \eqref{structure} for some $p\ge 2$ and $\ddelta>0$, then any weak solution $u\in L^p((0,T), W_0^{1,p}(\Om))$ is $C^{k+1,\al}$ on compact subsets of $Q$.
\end{thrm}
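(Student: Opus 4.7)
The plan is to iterate the subelliptic parabolic Schauder theory of \cite{bramanti-brandolini, xu} on the linear equations satisfied by horizontal and vertical derivatives of $u$, starting from the base $C^{1,\al}$ regularity delivered by Theorem \ref{main1}. The induction is on $k$. The case $k=0$ is exactly Theorem \ref{main1}, which gives $\nabla_0 u, Z u\in C^\al_{loc}(Q)$ for some $\al\in(0,1)$.

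For the inductive step, assume the conclusion for $k-1$, so that under the hypotheses of the theorem $u\in C^{k,\al}_{loc}(Q)$ is already known. Fix $\ell\in\{1,\ldots,2n\}$, and set $v=X_\ell u$ and, separately, $w=Zu$. Differentiating \eqref{maineq} along the flows of $X_\ell$ and of $Z$, which we justify rigorously through difference quotients applied to the $\e$-regularized solutions $u^\e$ and a passage to the limit $\e\to 0^+$ using the uniform estimates of Theorem \ref{main epsilon}, one finds that $v$ is a weak solution of a linear divergence-form parabolic equation
\begin{equation*}
\p_t v \;=\; \sum_{i,j=1}^{2n} X_i\bigl(a_{ij}(x,t)\, X_j v\bigr) \;+\; \sum_{i=1}^{2n} X_i f_i(x,t),
\end{equation*}
with $a_{ij}(x,t)=\p_{\xi_j}A_i\bigl(x,\nabla_0 u(x,t)\bigr)$, and with $f_i$ collecting the explicit $x_j$-derivatives of $A_i$ together with the commutator contributions of the form $[X_\ell,X_i]u = c_{\ell i}\,Zu = c_{\ell i}\, w$. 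The analogous derivation for $w=Zu$ is simpler, since $Z$ lies in the center of the Heisenberg Lie algebra, so that $[Z,X_i]=0$ and no commutator terms arise.

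By the chain rule, the $C^{k,\al}_{loc}$ regularity of $A_i,\p_{x_j}A_i, \p_{\xi_j}A_i$ in $(x,\xi)$ composed with $\nabla_0 u\in C^{k-1,\al}_{loc}$ places the coefficients $a_{ij}$ and the inhomogeneities $f_i$ in $C^{k-1,\al}_{loc}$. The $L^\infty$ bound on $\nabla_0 u$ inherited from Theorem \ref{main epsilon} together with the lower bound in \eqref{structure} renders $(a_{ij})$ uniformly subelliptic. Applying the Schauder estimate of \cite{bramanti-brandolini, xu} to the linear equations satisfied by $v$ and $w$ yields $v,w\in C^{k,\al}_{loc}(Q)$; since $\ell$ was arbitrary, this promotes $u$ to $C^{k+1,\al}_{loc}(Q)$, completing the induction.

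The principal obstacle is the first bootstrap step, in which the commutator term $c_{\ell i}\,Zu$ appears on the right-hand side of the linear equation for $v=X_\ell u$: without already knowing that $Zu$ is H\"older continuous, the Schauder theory could not be applied. It is precisely this coupling that forces us to propagate the pair $(X_\ell u,\, Zu)$ together through the induction, and that explains why the H\"older estimate for $Zu$ in Theorem \ref{main1}, beyond the classical horizontal H\"older estimate for $\nabla_0 u$, is a crucial structural input for the entire regularity scheme.
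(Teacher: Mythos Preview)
Your proposal is correct and follows the same route as the paper, which in fact only sketches the argument in a single sentence preceding the statement: ``The $C^{1,\al}$ estimates in \eqref{c1alpha0} in Theorem \ref{main epsilon} allow us to apply the Schauder theory developed in \cite{bramanti-brandolini, xu}, and finally deduce the following result.'' Your write-up supplies the details of this bootstrap --- differentiating along $X_\ell$ and $Z$, identifying the linear equations with $C^{k-1,\al}$ coefficients, and invoking Schauder --- and correctly highlights why the H\"older control of $Zu$ from Theorem \ref{main1} is the structural ingredient that makes the first step of the iteration go through.
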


The present paper is the first study of higher regularity of weak solutions for the non stationary $p$-Laplacian type in the sub-Riemannian setting, and it is based on the techniques introduced by Zhong in \cite{Zhong}. The stationary case has been developed so far essentially  only in the Heisenberg group case thanks to the work of Domokos, \cite{Dom}, Manfredi, Mingione \cite{Manfredi-Mingione}, Mingione, Zatorska-Goldstein and Zhong \cite{Mingione-Zatorska-Zhong}, Ricciotti \cite{Ricciotti}, \cite{Ricciotti1} and Zhong \cite{Zhong}. Regularity in more general  contact sub-Riemannian manifolds, including the rototraslation group,  has been recently established by the two of the authors and coauthors \cite{CCLDO} and independently by Mukherjee 
\cite{M} based on an extension of the techniques in \cite{Zhong}. Domokos and Manfredi \cite{DomMan} are rapidly making substantial progress in higher steps groups and in some special non-group structures, using the Riemannian approximation approach as in the work \cite{CCLDO}.

The plan of the paper is as follows. In Section \ref{S:prelim} we collect some preparatory material that will be used in the main body of the paper. Section \ref{S:cacc} is devoted to proving the first part of Theorem \ref{main epsilon}, which establishes the Lipschitz regularity of the approximating solutions $u^\e$. In Section \ref{S:hold} we prove the H\"older regularity of derivatives of $u^\e$ in Theorem \ref{main epsilon}. Finally, in Section \ref{S:main} we use the comparison principle and  Theorem \ref{main epsilon} to establish Theorem \ref{main1}.

\bigskip

Some final comments are in order. The non-degeneracy hypothesis $\delta>0$ in \eqref{structure} (see also \eqref{structure-epsilon}) is not needed in the Euclidean setting and, in the  stationary regime, it is not needed in the Heisenberg group either.  We suspect the $C^{1,\alpha}$ regularity of weak solutions for \eqref{main1} still holds without this hypothesis, but at the moment we are unable to prove it. In this note we use $\delta>0$ notably in \eqref{need delta>0 } and in Theorem \ref{moser}. 

In order to extend the parabolic regularity theory to the sub-Riemannian setting   one has to find a way to implement, in this  non-Euclidean framework, some of the techniques introduced by Di Benedetto \cite{DB} which rely on non-isotropic cylinders in space-time.   The key idea  is to work with cylinders whose dimensions are
suitably rescaled to reflect the degeneracy exhibited by the partial differential equation. To give an example, if one sets $x\in \Om$, $R,\mu>0$, one can define the intrinsic cylinder
$$Q_R(\mu):= B(x,R)\times (-\mu^{2-p}R^2,0), \text{ with }\sup_{Q_R(\mu)}|\nabla_0 u|\le \mu.$$
In contrast with the usual parabolic cylinders of the linear theory, the shape of the $Q_R(\mu)$ cylinders is stretched in the time dimension by a factor of the order $|\nabla_0 u|^{2-p}.$ 

The use of such non-isotropic cylinders  seems  necessary in order to make-up for the different homogeneity of the time derivative and the space derivatives in the degenerate regime $\delta=0$. In a future study we plan to return to the problem of extending Di Benedetto's Caccioppoli inequalities on non-isotropic cylinders to the Heisenberg group and beyond. 

\medskip

\noindent{\it Acknowledgements.}  We thank Vira A. Markasheva, who collaborated with us on an earlier version of this project.

\section{Preliminaries}\label{S:prelim}

In this section we collect a few definitions and preliminary results that will be used throughout the rest of the paper.  As indicated in the introduction, for each $\e\in (0,1)$ we define $g_\e$ to be the Riemannian metric in $\Hn$ such that $X_1,...,X_{2n}, \e Z$ is an orthonormal frame, and denote such frame as $X_1^\e, ..., X_{2n+1}^\e$. The corresponding gradient operator will be denoted by 
$\nabla_\e $.

\begin{dfn}\label{cylinder}
For $x_0\in \Om\subset \Hn$, we define a  parabolic cylinder $Q_{\e, r}(x_0, t_0)\subset Q$
to be a set of the form
$Q_{\e, r}(x_0, t_0)= B_{\e}(x_0, r) \times (t_0-r^2, t_0 ).$
where $r>0$, $B_{\e}(x_0, r)\subset \Omega$ denotes the $g_\e$-Riemannian ball of center $x_0$
and $t_0\in (0, T)$.  We call parabolic boundary of the cylinder $Q_{\e, r}(x_0, t_0)\subset Q$
the  set $B_{\e}(x_0, r) \times \{t_0-r^2\} \cup \partial B_{\e}(x_0, r) \times [t_0-r^2, t_0).$
\end{dfn}

First of all we recall the H\"older regularity, and local boundedness of weak solutions of \eqref{main1} and \eqref{approx1} from \cite{ACCN}.

\begin{lemma}  Let $Q=\Om\times (0,T)\subset \Hn\times \R^+$, and $\delta\ge 0$. For  $\e\ge 0$ and $p\ge 2$, consider a weak solution $u^\e\in L^p ((0,T), W^{1,p,\e}(\Om))\cap C^2(Q)$  of the approximating equation \eqref{approx1} in $Q$.
For any open ball $B\subset \subset \Om$ and $T>t_2\ge t_1\ge 0$
there exist  constants $C=C(n, p, \lambda, \Lambda, d(B, \p\Om), T-t_2)>0$,  and  $\al=\al (n, p, \lambda, \Lambda, d(B, \p\Om), T-t_2) \in (0,1),$ such that
\begin{equation}\label{alpha}
||u^\e ||_{C^{\alpha} (B\times (t_1,t_2) )}\le C
\bigg(\int_0^T \int_\Om (\ddelta+|\nabla_\e u^\e|^2)^{\frac{p}{2}} dx dt \bigg)^{\frac{1}{p}}.
\end{equation}
\end{lemma}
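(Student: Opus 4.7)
The plan is to adapt the De Giorgi--Nash--Moser iteration scheme to the sub-Riemannian parabolic setting, working uniformly in $\varepsilon$. The first ingredient is a Caccioppoli inequality on the level sets $(u^\varepsilon-k)_\pm$. Taking $\phi = \eta^{2}(u^\varepsilon-k)_\pm$ as a test function in the weak formulation of \eqref{approx1}, where $\eta \in C^\infty_0(Q_{\varepsilon,r}(x_0,t_0))$ is a standard parabolic cut-off, and exploiting the ellipticity bound $\lambda(\delta+|\xi|^2)^{(p-2)/2}|\eta|^2 \le \partial_{\xi_j}A_i^\varepsilon \eta_i\eta_j$ from \eqref{structure-epsilon}, one obtains after a routine absorption argument an energy inequality of the form
\begin{equation*}
\sup_{t_0-r^2<t<t_0}\int_{B_\varepsilon(x_0,r)} (u^\varepsilon-k)_\pm^{2}\eta^{2}\,dx + \int_{Q_{\varepsilon,r}} |\nabla_\varepsilon (u^\varepsilon-k)_\pm|^{2}\eta^{2}\,dxdt \le C \int_{Q_{\varepsilon,r}}(\delta+|\nabla_\varepsilon u^\varepsilon|^2)^{(p-2)/2}(u^\varepsilon-k)_\pm^{2}|\nabla_\varepsilon\eta|^{2}\,dxdt,
\end{equation*}
with the crucial feature that all constants depend only on $n,p,\lambda,\Lambda$ and not on $\varepsilon$.

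Next, I would upgrade this to local boundedness via Moser iteration. This requires the Sobolev--Poincar\'e inequality on the balls $B_\varepsilon(x_0,r)$. The point is that these inequalities hold uniformly in $\varepsilon \in [0,1]$ when one uses the frame $X_1^\varepsilon,\ldots,X_{2n+1}^\varepsilon$, because the homogeneous structure of $\Hn$ dominates the Riemannian one. Iterating the Caccioppoli estimate on a geometric sequence of parabolic cylinders and summing produces the $L^\infty$ bound
$$\|u^\varepsilon\|_{L^\infty(Q_{\varepsilon,r/2})} \le C \left(\fint_{Q_{\varepsilon,r}}|u^\varepsilon|^{p}\,dxdt\right)^{1/p},$$
which, combined with the energy bound coming from testing \eqref{approx1} with $u^\varepsilon$ itself, leads to the right-hand side of \eqref{alpha}.

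For the H\"older continuity part, the standard route is an oscillation decay lemma: if $M_+(r)=\sup_{Q_{\varepsilon,r}}u^\varepsilon$, $M_-(r)=\inf_{Q_{\varepsilon,r}}u^\varepsilon$, and $\omega(r)=M_+(r)-M_-(r)$, I would show that there exists $\theta \in (0,1)$ such that $\omega(r/2) \le \theta\,\omega(r)$. This is obtained by applying a De Giorgi-type lemma to whichever of $(u^\varepsilon - M_-(r) - \omega(r)/2)_-$ or $(M_+(r) - \omega(r)/2 - u^\varepsilon)_-$ occupies a controlled positive fraction of $Q_{\varepsilon,r}$, using the Caccioppoli inequality together with a Sobolev embedding to run a one-step ``measure-to-pointwise'' argument. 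Iterating $\omega(2^{-k}r) \le \theta^{k}\omega(r)$ yields the H\"older estimate \eqref{alpha} with an exponent $\alpha = \log_2(1/\theta)$ depending only on the structural data.

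The main obstacle is keeping all constants uniform in $\varepsilon \in (0,1]$, since naively the Riemannian Sobolev inequality associated with $g_\varepsilon$ degenerates as $\varepsilon \to 0$. The fix is to always couple the use of the small direction $X_{2n+1}^\varepsilon = \varepsilon Z$ with the Heisenberg Sobolev--Poincar\'e inequality on the horizontal balls, which holds with constants independent of $\varepsilon$; equivalently one uses the $\varepsilon$-version of the Sobolev inequality on $B_\varepsilon(x_0,r)$ proved in \cite{ACCN}. Because the statement allows $\delta \ge 0$, the argument is insensitive to the non-degeneracy parameter: the regularization $(\delta+|\xi|^2)^{(p-2)/2}$ only enters through the Caccioppoli step and is absorbed before iteration.
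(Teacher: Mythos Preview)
The paper does not actually prove this lemma: it is simply \emph{recalled} from \cite{ACCN}, where the Harnack inequality for degenerate parabolic $p$-Laplacians in the sub-Riemannian setting is established (with constants stable as $\varepsilon\to 0$). So there is no ``paper's own proof'' to compare against beyond that citation; your sketch is in the spirit of what \cite{ACCN} does, but the route there is Moser iteration leading to a Harnack inequality, from which oscillation decay and H\"older continuity follow, rather than a direct De~Giorgi alternative.

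That said, your sketch has two genuine gaps. First, the Caccioppoli inequality you write is not the one that comes out of the structure conditions: testing \eqref{approx1} with $\eta^2(u^\varepsilon-k)_\pm$ and using $A_i^\varepsilon(x,\xi)\xi_i \gtrsim (\delta+|\xi|^2)^{(p-2)/2}|\xi|^2$ produces a left-hand side of the form $\int (\delta+|\nabla_\varepsilon u^\varepsilon|^2)^{(p-2)/2}|\nabla_\varepsilon(u^\varepsilon-k)_\pm|^2\eta^2$ (equivalently $\int |\nabla_\varepsilon(u^\varepsilon-k)_\pm|^p\eta^p$ after adjusting the test function), not a bare $\int |\nabla_\varepsilon(u^\varepsilon-k)_\pm|^2\eta^2$. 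The mismatch in homogeneity between your left- and right-hand sides cannot be fixed by absorption.

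Second, and more seriously, the oscillation-decay step you describe is the \emph{linear} De~Giorgi alternative, and it does not go through for the degenerate case $\delta=0$, $p>2$ without modification. The issue is that the energy estimate naturally lives at scale $|\nabla u|^{p}$ while the time derivative lives at scale $u^2$, so the ``measure-to-pointwise'' lemma fails on standard cylinders: one needs DiBenedetto's intrinsic scaling, replacing $Q_r=B_r\times(-r^2,0)$ by cylinders whose time-length depends on the oscillation of $u$ itself. Your final sentence (``the argument is insensitive to the non-degeneracy parameter'') is precisely where this is hidden. The reference \cite{ACCN} handles this by building the intrinsic geometry into the Harnack inequality; if you want a self-contained argument you must do the same.
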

When $\e>0$ and $\delta>0$, classical regularity results (e.g., \cite{LadUr1967}) yield that weak solutions have bounded gradient, and hence \eqref{approx1} is strongly parabolic, thus leading to weak solutions being smooth. Clearly such smoothness may degenerate as $\e\to 0$, and the main point of this paper is to show that this does not happen.

Let  $\Om\subset\R^n$ be  a bounded open set and let $Q=\Om\times (0,T)$. For a function $u:Q\to \R$, and $1\le p,q$ we define the Lebesgue spaces $L^{p,q}(Q)=L^q([0,T], L^p(\Om))$, endowed with the norms
\begin{equation}\label{lpq-norms}
||u||_{L^{p,q}(Q)} =\Big(\int_0^T (\int_\Om |u|^p dx)^{\frac{q}{p}} dt\Big)^{\frac{1}{q}}.
\end{equation}
When $p=q$, we will refer to $L^{p,p}(Q)$ as $L^p(Q)$.
 One has the following useful reformulation of the Sobolev embedding theorem \cite{fs:hardy} in terms of $L^{p,q}$ spaces. In the next statement, we will indicate with $N=2n+2$ the homogenous dimension of $\Hn$ with respect to the non-isotropic group dilations, and we will denote by $N_1=N+2=2n+4$ the corresponding parabolic dimension with respect to the dilations $(x,t)\to (\delta_\lambda x,\lambda^2 t)$.
 
\begin{lemma}\label{lemma-sobolev}
Let $v$ be Lipschitz function in $Q$, and assume that for all $0<t<T$,  $v(\cdot, t)$ has  compact support in $\Om\times\{t\}$. 
\begin{itemize}
\item[(i)] There exists $C=C(n)>0$ such that for any $\e \in [0,1]$ one has  $$||v||_{L^{\frac{2N}{N-2}, 2}(Q)} \le C ||\nabla_\e v||_{L^{2,2}(Q)}.$$
\item[(ii)] If $v\in L^{2,\infty}(Q)$, then $v\in L^{\frac{2N_1}{N_1-2},\frac{2N_1}{N_1-2}}(Q)$, and there exists $C>0$, depending on $n$, such that
for any $\e \in [0,1]$ one has 
$$||v||_{L^{\frac{2N_1}{N_1-2},\frac{2N_1}{N_1-2}}(Q)}^2 \le C (||v||^2_{L^{2,\infty}(Q)}+||\nabla _\e v||_{L^{2,2}(Q)}^2).$$ 
\end{itemize}
\end{lemma}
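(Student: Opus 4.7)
My plan is to derive both estimates from the Folland--Stein Sobolev inequality on $\Hn$, applied slice by slice at each fixed time, combined with a H\"older interpolation in the space variable and Young's inequality for part (ii). The key observation that keeps constants independent of $\e\in[0,1]$ is that
\[
|\nabla_0 v|^2 \;\le\; |\nabla_0 v|^2 + \e^2 |Zv|^2 \;=\; |\nabla_\e v|^2
\]
pointwise, so any bound involving $\|\nabla_0 v\|_{L^2}$ only improves when $\nabla_0$ is replaced by $\nabla_\e$.

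For part (i), I would fix $t\in(0,T)$ and exploit that $v(\cdot,t)$ is Lipschitz with compact support in $\Om$; extending by zero to $\Hn$ and applying the Folland--Stein Sobolev inequality \cite{fs:hardy}, whose critical exponent reflects the homogeneous dimension $N=2n+2$, yields
\[
\|v(\cdot,t)\|_{L^{2N/(N-2)}(\Om)} \;\le\; C(n)\, \|\nabla_0 v(\cdot,t)\|_{L^2(\Om)} \;\le\; C(n)\, \|\nabla_\e v(\cdot,t)\|_{L^2(\Om)}.
\]
Squaring both sides and integrating over $t\in(0,T)$ produces on the left exactly $\|v\|_{L^{2N/(N-2),2}(Q)}^2$ by the definition in \eqref{lpq-norms}, which gives (i).

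For part (ii), I would set $r = \frac{2N_1}{N_1-2} = 2 + \frac{4}{N}$. The pivotal step is to factorize $|v|^r = |v|^{4/N}\cdot |v|^2$ and apply H\"older's inequality in $x$ with conjugate exponents $p_1 = N/2$ and $p_2 = N/(N-2)$ (chosen so that $4p_1/N = 2$ and $2p_2 = 2N/(N-2)$), obtaining pointwise in $t$
\[
\int_\Om |v(\cdot,t)|^r\,dx \;\le\; \|v(\cdot,t)\|_{L^2(\Om)}^{4/N}\; \|v(\cdot,t)\|_{L^{2N/(N-2)}(\Om)}^2.
\]
Then integrating in $t$, estimating the first factor uniformly by $\|v\|_{L^{2,\infty}(Q)}^{4/N}$, and invoking part (i) on the remaining time integral, I obtain
\[
\|v\|_{L^{r,r}(Q)}^r \;\le\; C \|v\|_{L^{2,\infty}(Q)}^{4/N}\; \|\nabla_\e v\|_{L^{2,2}(Q)}^2.
\]
Raising both sides to the power $2/r$ transforms the exponents on the right into $\frac{4}{N+2}$ and $\frac{2N}{N+2}$, and a final application of Young's inequality with conjugate exponents $\frac{N+2}{2}$ and $\frac{N+2}{N}$ then yields the claimed estimate.

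The main obstacle is simply bookkeeping: one must verify that the H\"older and Young exponents arithmetic lines up exactly with the parabolic exponent $r = 2N_1/(N_1-2)$ that encodes the parabolic homogeneous dimension $N_1=N+2$. Beyond that, the uniformity in $\e$ is automatic since only the horizontal component of $\nabla_\e v$ enters the Folland--Stein bound, so the Riemannian approximation never affects the constants.
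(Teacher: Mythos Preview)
Your proof is correct. The paper does not actually supply a proof of this lemma; it cites the Folland--Stein embedding \cite{fs:hardy} and, for the uniformity of the constant in $\e$, refers to \cite{CM, CC} with the remark that this stability ``is not trivial.'' Your observation that $|\nabla_0 v|\le |\nabla_\e v|$ pointwise makes the $\e$-uniformity here genuinely elementary: since the Sobolev exponent $2N/(N-2)$ on the left already corresponds to the sub-Riemannian homogeneous dimension $N=2n+2$, the $\e=0$ inequality is the sharpest one, and enlarging the right-hand side from $\|\nabla_0 v\|_{L^2}$ to $\|\nabla_\e v\|_{L^2}$ only weakens it. The references \cite{CM, CC} treat harder stability questions (doubling constants, Poincar\'e inequalities, heat kernel bounds) in which the $\e Z$ component cannot simply be discarded; for the present lemma no such machinery is needed, and your direct argument---slicewise Folland--Stein for (i), then the H\"older/Young interpolation with exponents $\tfrac{N}{2},\tfrac{N}{N-2}$ and $\tfrac{N+2}{2},\tfrac{N+2}{N}$ for (ii)---is the standard route and the arithmetic checks out.
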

We note that as $\e$ decreases to zero, the background geometry shifts from Riemannian to sub-Riemannian. The stability with respect to $\e$  of the constant $C$ in the Lemma \ref{lemma-sobolev} is not trivial, see \cite{CM, CC}.

In the sequel we will use an interpolation inequality that will take the place of the Sobolev inequality in a Moser type iteration, just as, for example, in \cite[Proposition 4.2]{CLM}. Although the result does not use the equation at all, we state it in terms that will make it immediately applicable later on. Henceforth, to simplify the notation, we will routinely omit the indication of $dx$, $dx dt$, etc. in all integrals involved, unless there is risk of confusion.  

\begin{lemma}\label{SobXU}
Let $u^\e$ be a weak solution of \eqref{approx1} in $Q$.  
If $ \beta \geq 0$,  and $\eta\in C^{1}([0,T], C^\infty_0 (\Om))$ vanishes on the 
parabolic boundary of $Q$, then there is a constant $C>0$, depending only 
on $||u^\e||_{L^\infty(Q)}$, such that
\begin{align*}
& \int_{t_1} ^{t_2}\int_\Omega
(\ddelta + |\nabla_\e u^\e|^2)^{(\beta+p+2)/2} |\eta|^{\beta+2}
 \leq 
C (\beta +p +1)^2 \int_{t_1} ^{t_2}\int_\Omega
(\ddelta+  |\nabla_\e u^\e|^2)^{\frac{p+\beta-2}{2}} \sum_{i,j=1}^{2n+1}| X_j^\e X_i^\e u^\e|^2 |\eta|^{\beta + 2}
\\ 
&+ C\beta^2 \int_{t_1} ^{t_2}\int_\Omega
(\ddelta + |\nabla_\e u^\e|^2)^{(\beta+p)/2} |\eta|^{\beta} ( |\eta|^{2}
+ |\nabla_\e \eta |^2).
\end{align*}
\end{lemma}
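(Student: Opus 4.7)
The plan is to obtain the inequality via an integration by parts against $u^\e$ itself, exploiting the $L^\infty$ bound on $u^\e$ (the PDE \eqref{approx1} is not used). Abbreviate $F = \ddelta + |\nabla_\e u^\e|^2$ and split
\begin{equation*}
F^{(\beta+p+2)/2} = F^{(\beta+p)/2}(F-\ddelta) + \ddelta F^{(\beta+p)/2} = F^{(\beta+p)/2}\sum_{i=1}^{2n+1}(X_i^\e u^\e)^2 + \ddelta F^{(\beta+p)/2}.
\end{equation*}
Integrated against $\eta^{\beta+2}$, the $\ddelta$ contribution already has the form $F^{(\beta+p)/2}\eta^\beta\cdot \eta^2$ appearing on the right side of the claim (the fixed $\ddelta$ is absorbed into $C$).

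For the main piece I would integrate by parts in the spatial variables, using that each $X_i^\e$ is skew-adjoint on spatially compactly supported functions (being a left-invariant first-order operator with vanishing Euclidean divergence). Writing one of the two $X_i^\e u^\e$ factors as $X_i^\e$ acting on $u^\e$ moves that derivative onto the companion product $F^{(\beta+p)/2}(X_i^\e u^\e)\eta^{\beta+2}$ and produces a free $u^\e$ factor, which is estimated pointwise by $\|u^\e\|_{L^\infty(Q)}$. The surviving derivative then distributes to three types of terms: (a)~$\frac{\beta+p}{2}F^{(\beta+p-2)/2}X_i^\e F$, controlled via $|X_i^\e F| \le 2F^{1/2}\bigl(\sum_j |X_i^\e X_j^\e u^\e|^2\bigr)^{1/2}$; (b)~$F^{(\beta+p)/2}X_i^\e X_i^\e u^\e$; and (c)~$(\beta+2)F^{(\beta+p)/2}(X_i^\e u^\e)\eta^{\beta+1}X_i^\e\eta$. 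No boundary terms appear because $\eta$ vanishes on the parabolic boundary at each fixed time slice; no integration in $t$ is performed.

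Each of the three terms is then estimated by Young's inequality $ab \le \sigma a^2 + (4\sigma)^{-1}b^2$, with the splitting calibrated so that the $a$-factor is exactly $F^{(\beta+p+2)/4}\eta^{(\beta+2)/2}$. Squaring gives $\sigma F^{(\beta+p+2)/2}\eta^{\beta+2}$, which for $\sigma$ small enough is absorbed back into the left-hand side. The companion $b$-factors of terms (a) and (b) then contribute to the second-derivative integral on the right side with a prefactor of order $(\beta+p+1)^2 \|u^\e\|_{L^\infty}^2/\sigma$, while term (c) produces the $|\nabla_\e\eta|^2$ contribution with prefactor of order $(\beta+2)^2 \|u^\e\|_{L^\infty}^2/\sigma$, matching the claimed structure.

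The main point to watch is the bookkeeping of the exponents in the Young's inequality splitting, so that the factor slated for absorption reconstructs exactly $F^{(\beta+p+2)/2}\eta^{\beta+2}$ and not a neighboring power, and that the residual terms distribute correctly into the two structural templates on the right. Once the exponents are set up the argument is essentially mechanical, and the dependence of $C$ on $\|u^\e\|_{L^\infty(Q)}$ enters solely through the $u^\e$ factor produced by the integration by parts.
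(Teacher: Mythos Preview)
Your proposal is correct and follows essentially the same route as the paper's proof: split off the $\ddelta F^{(\beta+p)/2}$ piece, integrate one factor of $X_i^\e u^\e$ by parts to produce a free $u^\e$ bounded by $\|u^\e\|_{L^\infty}$, distribute the derivative over the remaining product, and close with Young's inequality calibrated to absorb $F^{(\beta+p+2)/2}\eta^{\beta+2}$ back into the left-hand side. The paper also remarks explicitly that the equation \eqref{approx1} is not used here, in agreement with your observation.
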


\begin{proof} Writing $(\ddelta + |\nabla_\e u^\e|^2)^{(\beta+p+2)/2}=(\ddelta + |\nabla_\e u^\e|^2)^{(\beta+p)/2}(\ddelta + |\nabla_\e u^\e|^2)$, one has 
\begin{align*}
& \int_{t_1} ^{t_2}\int_\Omega
(\ddelta + |\nabla_\e u^\e|^2)^{(\beta+p+2)/2} |\eta|^{\beta+2} =
\ddelta 
\int_{t_1} ^{t_2}\int_\Omega
(\ddelta + |\nabla_\e u^\e|^2)^{(\beta+p)/2} 
|\eta|^{\beta+2}
\\
& +  
\sum_{i=1}^{2n+1}\int_{t_1} ^{t_2}\int_\Omega
(\ddelta + |\nabla_\e u^\e|^2)^{(\beta+p)/2} 
X_i^\e u^\e X_i^\e u^\e |\eta|^{\beta+2}
 = \ddelta 
\int_{t_1} ^{t_2}\int_\Omega
(\ddelta + |\nabla_\e u^\e|^2)^{(\beta+p)/2} 
|\eta|^{\beta+2}
\\
& - \sum_{i=1}^{2n+1}\int_{t_1} ^{t_2}\int_\Omega
X_i^\e\Big((\ddelta + |\nabla_\e u^\e|^2)^{(\beta+p)/2} X_i^\e u^\e\Big)  u^\e |\eta|^{\beta+2} 
- (\beta+2)\int_{t_1} ^{t_2}\int_\Omega
(\ddelta + |\nabla_\e u^\e|^2)^{(\beta+p)/2} X_i^\e u^\e u^\e |\eta|^{\beta+1}X_i^\e\eta 
\\
& \leq\ddelta 
\int_{t_1} ^{t_2}\int_\Omega
(\ddelta + |\nabla_\e u^\e|^2)^{(\beta+p)/2} 
|\eta|^{\beta+2}
 +(\beta+p+1)\int_{t_1} ^{t_2}\int_\Omega
(\ddelta + |\nabla_\e u^\e|^2)^{(\beta+p)/2} \sum_{i,j=1}^{2n+1}|X_i^\e X_i^\e u^\e|   |u^\e| |\eta|^{\beta+2} 
\\
& + C(\beta+2)\int_{t_1} ^{t_2}\int_\Omega
(\ddelta + |\nabla_\e u^\e|^2)^{(\beta+p +1)/2}  |\eta|^{\beta+1}|\nabla_\e\eta|. 
\notag
\end{align*}
To conclude the argument, it suffices to apply Young's inequality.

\end{proof}

\section{Caccioppoli type inequalities and Lipschitz regularity of $u^\e$.}\label{S:cacc}

In this section we establish Lipschitz regularity for the derivatives of the solutions  $u^\e$.  The main results of this section are summarized in the following estimates,  which are unform in $\e>0$.

\begin{thrm}\label{main-epsilon-lebesgue} 

Let $A^\e_i$ satisfy the structure conditions \eqref{structure} for some $p\ge 2$ and $\ddelta>0$. Consider an open set $\Om\subset \Hn$  and $T> 0$, and let $u^\e$ be a   weak solution of
\eqref{approx1}  in $Q=\Om\times (0,T)$. 
For any open ball $B\subset \subset \Om$ and $T>t_2\ge t_1\ge 0$, 
there exists a constant $C>0$, depending on $n, p, \lambda, \Lambda, d(B, \p\Om), T-t_2, \ddelta$,  such that
\begin{align}\label{mainest}
& ||\nabla_\e u^\e  ||_{L^\infty(B\times (t_1,t_2) )}^{ p}+ \int_{t_1}^{t_2} \int_B (\ddelta+|\nabla_\e u|^2)^{\frac{p-2}{2}} \bigg(\sum_{i,j=1}^{2n} |X_i^\e X_j^\e u^\e |^2  + |\nabla_\e Zu^\e|\bigg) 
\\
& \le C \int_0^T \int_\Om (\ddelta+|\nabla_\e u^\e|^2)^{\frac{p}{2}}.
\notag
\end{align}
\end{thrm}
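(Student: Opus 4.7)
The strategy adapts the Zhong argument \cite{Zhong} to the parabolic setting while tracking with care the dependence on the Riemannian approximation parameter $\e$. Since \eqref{approx1} is uniformly parabolic for each fixed $\e>0$, the solution $u^\e$ is classical and the formal manipulations below are justified; the effort is to extract bounds that survive in the limit $\e\to 0^+$. I will differentiate \eqref{approx1} along the frame. For a horizontal index $1\le k\le 2n$, setting $v=X_k u^\e$ and using $[X_i,X_k]=\delta_{ik}Z$, one obtains
\begin{equation*}
\partial_t v-\sum_{i,j}X_i^\e\bigl(\partial_{\xi_j}A_i^\e(x,\nabla_\e u^\e)\,X_j^\e v\bigr)=\mathcal{R}_k,
\end{equation*}
where $\mathcal{R}_k$ packages the $\partial_{x_k}A_i^\e$ inhomogeneity together with commutator terms that bring in $Zu^\e$ and $\nabla_\e Zu^\e$. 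Because $[Z,X_j^\e]=0$ for every $j$, differentiating in $Z$ produces for $w=Zu^\e$ the cleaner equation with the same leading structure and only $\sum_i X_i^\e(\partial_z A_i^\e)$ as source.

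Next I derive two Caccioppoli-type inequalities. Testing the equation for $v=X_k u^\e$ against $v(\ddelta+|\nabla_\e u^\e|^2)^{\beta/2}\eta^{\beta+2}$ with a parabolic cutoff $\eta$, summing in $k$, integrating in time, and using \eqref{structure-epsilon} together with Young's inequality, one controls
\begin{equation*}
\sup_t\!\!\int\!|\nabla_\e u^\e|^2(\ddelta+|\nabla_\e u^\e|^2)^{(\beta+p-2)/2}\eta^{\beta+2}+\!\!\int\!\!\!\int\!(\ddelta+|\nabla_\e u^\e|^2)^{(\beta+p-2)/2}\sum_{i,j=1}^{2n+1}|X_i^\e X_j^\e u^\e|^2\eta^{\beta+2}
\end{equation*}
up to lower-order cutoff terms, plus one ``bad'' commutator residue of the form $\int\!\!\int(\ddelta+|\nabla_\e u^\e|^2)^{(\beta+p-2)/2}|Zu^\e|^2\eta^{\beta+2}$ generated by $[X_i,X_j]=\delta_{ij}Z$. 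The analogous testing in the $w=Zu^\e$ equation yields a Caccioppoli that controls $\sup_t\int|Zu^\e|^2(\cdot)^{\beta/2}\eta^{\beta+2}$ and $\int\!\!\int(\cdot)^{(\beta+p-2)/2}|\nabla_\e Zu^\e|^2\eta^{\beta+2}$ with no commutator obstruction, because $Z$ commutes with every $X_j^\e$.

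Combining the two inequalities with suitable weights, and exploiting the non-degeneracy $\ddelta>0$ to handle the bad horizontal term via an integration by parts that transfers one $Z$ from $Zu^\e$ onto the weight and brings in the good $|\nabla_\e Zu^\e|^2$ quantity (this is the role played by \eqref{need delta>0 }), one absorbs the residue into the left-hand side and obtains a clean Caccioppoli valid for every $\beta\ge 0$. Feeding this into Lemma \ref{SobXU} and bootstrapping with the parabolic Sobolev embedding of Lemma \ref{lemma-sobolev}(ii) in the Moser fashion on a shrinking family of parabolic cylinders gives
\begin{equation*}
\|\nabla_\e u^\e\|_{L^\infty(B\times(t_1,t_2))}^{p}\le C\int_0^T\!\!\int_\Om(\ddelta+|\nabla_\e u^\e|^2)^{p/2},
\end{equation*}
with $C$ independent of $\e$; returning to the Caccioppoli at $\beta=0$ with this $L^\infty$ bound in hand yields the integral control on the second derivatives asserted in \eqref{mainest}.

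The principal obstacle is the horizontal commutator $[X_i,X_j]=\delta_{ij}Z$, which injects the vertical derivative $Zu^\e$ into the horizontal Caccioppoli; since $X_{2n+1}^\e=\e Z$ degenerates, the ellipticity \eqref{structure-epsilon} does not by itself yield $\e$-uniform control of $Zu^\e$. Closing this gap requires $\ddelta>0$ together with the integration-by-parts trick that converts $Zu^\e$ into $\nabla_\e Zu^\e$. Balancing the $\beta$-dependent constants that arise through the iteration, and keeping each estimate uniform in $\e$, is the most delicate accounting and is the heart of the argument.
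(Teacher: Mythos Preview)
Your overall architecture matches the paper: differentiate \eqref{approx1} along the frame, establish Caccioppoli inequalities for $X_\ell^\e u^\e$ and $Zu^\e$, close them, then Moser-iterate via Lemma~\ref{lemma-sobolev}(ii). But your account of the closing step is inaccurate and omits the key mechanism. You describe \eqref{need delta>0 } as ``an integration by parts that transfers one $Z$ from $Zu^\e$ onto the weight and brings in the good $|\nabla_\e Zu^\e|^2$ quantity.'' It is neither: \eqref{need delta>0 } is simply the pointwise commutator bound $|Zu^\e|\le\sum_{i,j}|X_i^\e X_j^\e u^\e|$ combined with $(\ddelta+|\nabla_\e u^\e|^2)^{(p-2)/2}\ge\ddelta^{(p-2)/2}$, and it only suffices to close the case $\beta=0$ (Corollary~\ref{lemma3.4bis}).

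For general $\beta$, the bad residue in the horizontal Caccioppoli (Lemma~\ref{lemma3.4}) carries a factor $(\beta+1)^4$ and cannot be absorbed by this device. The paper (following Zhong) closes via a genuinely different step you do not mention: one first proves a \emph{mixed} Caccioppoli (Lemma~\ref{cacciopoXZ}) by testing the $X_\ell^\e u^\e$-equation with $\eta^{\beta+2}|Zu^\e|^\beta X_\ell^\e u^\e$ and the $Zu^\e$-equation with $\beta\eta^{\beta+2}|Zu^\e|^{\beta-2}Zu^\e|\nabla_\e u^\e|^2$, then iterates this down in $\beta$ (Corollary~\ref{CCGMc2.7}). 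In the proof of Theorem~\ref{CCGMt2.8} one splits the bad term by H\"older into a factor controlled by Corollary~\ref{CCGMc2.7} and a harmless factor, then absorbs via Young; this is what produces the tolerable polynomial growth $(\beta+1)^5$ needed for the Moser iteration. Your phrase ``combining the two inequalities with suitable weights'' is too vague to stand in for this, and your explicit description of the mechanism points to the wrong idea.
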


The proof of Theorem \ref{main-epsilon-lebesgue} will follow from combining the results in Theorem \ref{CCGMt2.8}, Lemma \ref{GainDu}, Proposition \ref{moser} and Proposition \ref{Zu-reg}, that are all established later in the section.
The  Caccioppoli inequalities needed to prove Theorem \ref{mainest} will take up most of the section, and they all  apply to a solution $u^\e$ of the approximating equation \eqref{approx1} in a cylinder $ Q=\Omega\times (0,T)$.
We begin with two lemmas in which we explicitly detail the pde's satisfied by the smooth approximants $X_\ell^\e u^\e$ and $Zu^\e$.

\begin{lemma} \label{diff-PDE} Let $u^\e$ be a solution of \eqref{approx1} in $Q$. If we set $v^\e_\ell = X^\e_\ell u^\e$, with $ \ell = 1, . . . , 2n$, and $s_\ell = (-1)^{[\ell/n]}$, then the function $v^\e_\ell$ solves the equation
\begin{align}\label{eqderivX}
& \p_t v^\e_\ell
 = \sum_{i,j=1}^{2n+1} X^\e_i\Big( A_{i, \xi_j}^\e (x, \nabla_\e u^\e) X^\e_\ell X^\e_ju^\e \Big) 
\\
& + \sum_{i=1}^{2n+1} X^\e_i\Big( A_{i, x_\ell}^\e (x, \nabla_\e u^\e) - \frac{s_\ell x_{\ell+s_{\ell} n}}{2} A_{i, x_{2n+1}}^\e (x, \nabla_\e u^\e) \Big) +
s_\ell Z (A_{\ell+ s_{\ell}n}^\e (x, \nabla_\e u^\e)).
\notag
\end{align}
\end{lemma}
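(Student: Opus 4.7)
The plan is to differentiate both sides of the approximating equation \eqref{approx1} by applying the horizontal vector field $X^\e_\ell$, and then carefully account for the commutators between $X^\e_\ell$ and the other $X^\e_i$'s, together with the chain rule applied to $A^\e_i(x, \nabla_\e u^\e)$. Because $X^\e_\ell$ involves no time derivative, the left-hand side becomes simply $\p_t v^\e_\ell$, so all the work is on the right-hand side.

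First, I would expand $X^\e_\ell \big(X^\e_i A^\e_i(x,\nabla_\e u^\e)\big) = X^\e_i X^\e_\ell A^\e_i + [X^\e_\ell,X^\e_i] A^\e_i$. For $1 \le i \le 2n$ the bracket equals $[X_\ell,X_i]$, which in the Heisenberg group vanishes unless $i$ is the symplectic partner $\ell + s_\ell n$ of $\ell$, in which case it equals $s_\ell Z$. For $i = 2n+1$, one has $[X_\ell,\e Z] = 0$, so this term does not contribute. Summing over $i$ therefore yields the single purely vertical contribution $s_\ell\, Z\!\left(A^\e_{\ell+s_\ell n}(x,\nabla_\e u^\e)\right)$ that appears as the last summand in \eqref{eqderivX}.

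Next, I would expand $X^\e_\ell\!\left(A^\e_i(x,\nabla_\e u^\e)\right)$ via the chain rule. The differentiation in the spatial slot produces, using $X_\ell = \p_\ell - \frac{s_\ell x_{\ell+s_\ell n}}{2}\p_z$, the first-order term
\[
A^\e_{i,x_\ell}(x,\nabla_\e u^\e) - \frac{s_\ell\, x_{\ell+s_\ell n}}{2}\, A^\e_{i,x_{2n+1}}(x,\nabla_\e u^\e),
\]
which, once placed under the outer $X^\e_i$, gives exactly the second sum in the statement. The differentiation in the $\xi$-slot gives $\sum_{j=1}^{2n+1} A^\e_{i,\xi_j}(x,\nabla_\e u^\e)\, X^\e_\ell X^\e_j u^\e$, and, again under the outer $X^\e_i$, this is the divergence term $\sum_{i,j} X^\e_i\!\big(A^\e_{i,\xi_j}(x,\nabla_\e u^\e) X^\e_\ell X^\e_j u^\e\big)$ in \eqref{eqderivX}. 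Collecting the three contributions reproduces the claimed identity.

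The whole argument is really a bookkeeping exercise and has no essential analytic difficulty; the only place where one must be careful is the commutator computation, because the non-standard index $\ell + s_\ell n$ and sign $s_\ell$ must be handled consistently so that the single vertical term $s_\ell Z A^\e_{\ell+s_\ell n}$ emerges and is not absorbed erroneously into the other two sums. Once the commutation relations $[X_i,X_j]$ in $\Hn$ and the identity $X^\e_{2n+1} = \e Z$ are fixed, the rest is a direct linear combination of the chain rule and the product rule.
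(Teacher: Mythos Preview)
Your proposal is correct and follows essentially the same approach as the paper: differentiate \eqref{approx1} by $X^\e_\ell$, swap $X^\e_\ell X^\e_i$ to $X^\e_i X^\e_\ell$ plus a commutator that produces the single vertical term $s_\ell Z(A^\e_{\ell+s_\ell n})$, and expand $X^\e_\ell A^\e_i(x,\nabla_\e u^\e)$ by the chain rule in both slots. The only cosmetic difference is that the paper writes out the cases $\ell\le n$ and $\ell\ge n+1$ separately rather than using the $s_\ell$ shorthand throughout.
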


\begin{proof}
Differentiating \eqref{approx1} with respect to $X^\e_\ell$, when $\ell\leq n$, we find
\begin{align*}
& \p_t v^\e_\ell = \sum_{i=1}^{2n+1} X^\e_\ell X^\e_i A_i^\e (x, \nabla_\e u^\e) = \sum_{i=1}^{2n+1} X^\e_i\Big(X^\e_\ell  A_i^\e (x, \nabla_\e u^\e) \Big)
+ \sum_{i=1}^{2n+1} [X^\e_\ell, X^\e_i] A_i^\e (x, \nabla_\e u^\e)
\\
& = 
\sum_{i,j=1}^{2n+1} X^\e_i\Big( A_{i, \xi_j}^\e (x, \nabla_\e u^\e) X^\e_\ell X^\e_ju^\e \Big)+ 
\sum_{i=1}^{2n+1} X^\e_i\Big( A_{i, x_\ell}^\e (x, \nabla_\e u^\e) - \frac{x_{\ell+n}}{2} A_{i, x_{2n+1}}^\e (x, \nabla_\e u^\e) \Big) + Z (A_{\ell+n}^\e (x, \nabla_\e u^\e)).
\end{align*}
Taking the derivative with respect to $X^\e_{\ell}$ when $\ell\geq n+1$, we obtain
\begin{align*}
& \p_t v^\e_{\ell} = \sum_{i=1}^{2n+1} X^\e_{\ell} X^\e_i A_i^\e (x, \nabla_\e u^\e)=  
    \sum_{i=1}^{2n+1} X^\e_i\Big(X^\e_{\ell} A_i^\e (x, \nabla_\e u^\e) \Big)
+ \sum_{i=1}^{2n+1} [X^\e_{\ell}, X^\e_i] A_i^\e (x, \nabla_\e u^\e)
\\
& = 
\sum_{i,j=1}^{2n+1} X^\e_i\Big( A_{i, \xi_j}^\e (x, \nabla_\e u^\e) X^\e_{l} X^\e_ju^\e \Big)
 + 
\sum_{i=1}^{2n+1} X^\e_i\Big( A_{i, x_{\ell}}^\e (x, \nabla_\e u^\e) + \frac{x_{\ell-n}}{2} A_{i, x_{2n+1}}^\e (x, \nabla_\e u^\e) \Big)
- Z (A_{\ell-n}^\e (x, \nabla_\e u^\e)).
\end{align*}

\end{proof}

\begin{lemma}\label{eqZ} 
Let $u^\e$ be a solution of \eqref{approx1} in  $Q$. Then, the function
$Zu^\e$  is a solution of the equation
$$\partial_t Zu^\e=
\sum_{i,j=1}^{2n+1}
X^\e_i(A^\e_{i, \xi_j}(x, \nabla_\e u^\e)X^\e_j Zu^\e) +
\sum_{i=1}^{2n+1}
X^\e_i(A^\e_{i, x_{2n+1}}(x, \nabla_\e u^\e)). 
$$
\end{lemma}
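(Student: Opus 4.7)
The plan is to differentiate the equation \eqref{approx1} with respect to $Z=\partial_{x_{2n+1}}$ and exploit the fact that $Z$ lies in the center of the Heisenberg Lie algebra. Recall that $X^\e_{2n+1}=\e Z$, and the commutation relations $[X_i,X_j]=\delta_{ij} Z$ and $[X_i,Z]=0$ imply that $Z$ commutes with every $X^\e_i$ for $i=1,\dots,2n+1$. Consequently, $Z$ commutes with the time derivative as well, and we may freely interchange $Z$ with each $X^\e_i$ without picking up any commutator terms, which is the key reason why the statement of Lemma \ref{eqZ} is much cleaner than that of Lemma \ref{diff-PDE}.

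Concretely, I would first apply $Z$ to both sides of \eqref{approx1} and use $[Z,X^\e_i]=0$ together with $Z\partial_t=\partial_t Z$ (which holds on $C^2$ functions) to obtain
\begin{equation*}
\partial_t Z u^\e \;=\; \sum_{i=1}^{2n+1} X^\e_i\, Z\bigl(A_i^\e(x,\nabla_\e u^\e)\bigr).
\end{equation*}
Next, I would expand the inner derivative by the chain rule, writing
\begin{equation*}
Z\bigl(A_i^\e(x,\nabla_\e u^\e)\bigr) \;=\; A^\e_{i,x_{2n+1}}(x,\nabla_\e u^\e) \;+\; \sum_{j=1}^{2n+1} A^\e_{i,\xi_j}(x,\nabla_\e u^\e)\, Z\bigl(X^\e_j u^\e\bigr),
\end{equation*}
where I use that the horizontal variables $x_1,\dots,x_{2n}$ entering $A_i^\e$ through the $x$-slot are annihilated by $Z$. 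Finally, invoking once more $[Z,X^\e_j]=0$ to rewrite $Z X^\e_j u^\e = X^\e_j Z u^\e$, and substituting back, yields the desired equation.

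The only point requiring any care is the justification that $u^\e$ is regular enough for $Zu^\e$ to satisfy this equation in the classical sense; but this is already taken for granted by the standing assumption $u^\e\in C^2(Q)$ together with the smoothness of the coefficients $A_i^\e$. There is no real obstacle here, precisely because the centrality of $Z$ in the Heisenberg algebra eliminates the commutator contributions that cluttered the proof of Lemma \ref{diff-PDE}.
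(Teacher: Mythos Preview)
Your proposal is correct and takes essentially the same approach as the paper, which simply states that the result follows by a straightforward differentiation of \eqref{approx1} with respect to $Z$ and omits the details. You have merely supplied those details, correctly exploiting the centrality of $Z$ to avoid commutator terms.
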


\begin{proof}
It follows by a straightforward differentiation of \eqref{approx1} with respect to $Z$, and we omit the details.

\end{proof}

\begin{lemma}\label{stimaZu} 
Let $u^\e$ be a solution of \eqref{approx1} in $Q$.
For any $ \beta \geq 0$ and for all $\eta\in C^{1}([0,T], C^\infty_0 (\Om))$,
one has
\begin{align*}
& \frac{1}{\beta + 2}\int_\Omega |Zu^\e|^{\beta+2} \eta^2 \Big|_{t_1}^{t_2} +\frac{\lambda
(\beta + 1)}{2}
\int_{t_1} ^{t_2}\int_\Omega
(\ddelta+|\nabla_\e u^\e|^2)^{\frac{p-2}{2}} | \nabla_\e Zu^\e|^2|Zu^\e|^{\beta} |\eta|^2,
\\
& \le\frac{\Lambda}{
(\beta + 1)}\Big(\frac{16 \Lambda}{\lambda} +2 \Big)
\int_{t_1} ^{t_2}\int_\Omega
(\ddelta+|\nabla_\e u^\e|^2)^{\frac{p-2}{2}}|\nabla_\e \eta|^2 |Zu^\e|^{\beta+2}
+ \frac{2} {\beta + 2}\int_{t_1}^{t_2}\int_\Omega |Zu^\e|^{\beta+2} \eta \partial_t \eta  
\\
& + \Lambda
(\beta + 1) \Big(\frac{16 \Lambda}{\lambda} +2 \Big)\int_{t_1} ^{t_2}\int_\Omega 
(\ddelta+|\nabla_\e u^\e|^2)^{\frac{p}{2}} \eta^2  |Zu^\e|^{\beta}. 
\end{align*}
\end{lemma}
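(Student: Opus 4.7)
The plan is to test the PDE for $Zu^\e$ furnished by Lemma~\ref{eqZ} against $\phi := |Zu^\e|^\beta Zu^\e\, \eta^2$ and integrate over $\Omega\times[t_1,t_2]$. Since $\partial_t Zu^\e \cdot |Zu^\e|^\beta Zu^\e = \frac{1}{\beta+2}\partial_t(|Zu^\e|^{\beta+2})$, an integration by parts in $t$ (together with $\partial_t(\eta^2) = 2\eta\,\partial_t\eta$) produces the boundary-in-time term $\frac{1}{\beta+2}\int_\Omega |Zu^\e|^{\beta+2}\eta^2\Big|_{t_1}^{t_2}$ on the left and the $+\frac{2}{\beta+2}\int\!\!\int |Zu^\e|^{\beta+2}\eta\,\partial_t\eta$ term on the right of the conclusion.

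For the diffusion term $\sum_{i,j}\int X_i^\e(A_{i,\xi_j}^\e X_j^\e Zu^\e)\,\phi$, integrating by parts in $X_i^\e$ and using $X_i^\e\phi = (\beta+1)|Zu^\e|^\beta X_i^\e Zu^\e\,\eta^2 + 2|Zu^\e|^\beta Zu^\e\,\eta X_i^\e\eta$ yields a coercive piece and a cross piece. The coercive piece is bounded below by $\lambda(\beta+1)\int(\ddelta+|\nabla_\e u^\e|^2)^{(p-2)/2}|\nabla_\e Zu^\e|^2|Zu^\e|^\beta\eta^2$ from the lower bound in \eqref{structure-epsilon}, while the cross piece is dominated (via Cauchy--Schwarz on the nonnegative form $A_{i,\xi_j}^\e\alpha_i\alpha_j$ together with the upper bound) by a constant multiple of $\Lambda\int(\ddelta+|\nabla_\e u^\e|^2)^{(p-2)/2}|\nabla_\e Zu^\e||Zu^\e|^{\beta+1}\eta|\nabla_\e\eta|$. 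The source piece $\sum_i\int X_i^\e(A_{i,x_{2n+1}}^\e)\,\phi$, after integration by parts and the structural bound $|A_{i,x_{2n+1}}^\e|\le\Lambda(\ddelta+|\nabla_\e u^\e|^2)^{(p-1)/2}$, contributes two error integrals bounded respectively by $\Lambda(\beta+1)\int(\ddelta+|\nabla_\e u^\e|^2)^{(p-1)/2}|\nabla_\e Zu^\e||Zu^\e|^\beta\eta^2$ and by $2\Lambda\int(\ddelta+|\nabla_\e u^\e|^2)^{(p-1)/2}|Zu^\e|^{\beta+1}\eta|\nabla_\e\eta|$.

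The final step is a careful application of Young's inequality $ab\le \sigma a^2 + b^2/(4\sigma)$ to each of these three error terms. The parameters are chosen so that all $|\nabla_\e Zu^\e|^2$-pieces absorb into half of the coercive term, leaving $\frac{\lambda(\beta+1)}{2}\int(\ddelta+|\nabla_\e u^\e|^2)^{(p-2)/2}|\nabla_\e Zu^\e|^2|Zu^\e|^\beta\eta^2$ on the left. The residual contributions then cluster into two families: those picking up a $(\beta+1)^{-1}$ scale group into the $|\nabla_\e\eta|^2$ integral on the right, while those picking up a $(\beta+1)$ scale group into the $(\ddelta+|\nabla_\e u^\e|^2)^{p/2}$ integral. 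The common factor $\frac{16\Lambda}{\lambda}+2$ in the two coefficients emerges by treating the pure-gradient source term $2\Lambda\int(\ddelta+|\nabla_\e u^\e|^2)^{(p-1)/2}|Zu^\e|^{\beta+1}\eta|\nabla_\e\eta|$ with a Young weight proportional to $\beta+1$, which splits its contribution symmetrically between the two bad families.

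The principal obstacle is the bookkeeping of the $(\beta+1)$-dependence through several simultaneous Young applications: one must keep the $|\nabla_\e Zu^\e|^2$ contributions small enough to be absorbed, while simultaneously arranging the remaining terms to exhibit the clean $(\beta+1)^{\pm 1}$-scaling on the right-hand side. This is essential because the inequality is designed to be iterated in $\beta$ in a Moser-type scheme later in the paper, so any loss of the correct polynomial dependence on $\beta$ in the coefficients would block the iteration.
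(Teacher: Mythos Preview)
Your proposal is correct and follows essentially the same route as the paper: both test the equation for $Zu^\e$ from Lemma~\ref{eqZ} against $\phi=\eta^2|Zu^\e|^{\beta}Zu^\e$, integrate by parts, and arrive at the same four error terms before Young's inequality. In fact you spell out the final absorption step and the $(\beta+1)^{\pm1}$ bookkeeping more carefully than the paper, which stops at the display of the four error integrals and simply writes ``thus concluding the proof.''
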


\begin{proof}
We use $\phi= \eta^2 |Zu^\e|^{\beta}Zu^\e$  as a test function  in the equation satisfied by $Zu^\e$, see Lemma \ref{eqZ}, to obtain 
\begin{align*}
& \int_{t_1} ^{t_2}\int_\Omega \partial_t Zu^\e \eta^2 |Zu^\e|^{\beta}Zu^\e =\int_{t_1} ^{t_2}\int_\Omega \sum_{i,j=1}^{2n+1}
X^\e_i(A^\e_{i, \xi_j}(x, \nabla_\e u)X^\e_j Zu^\e) \eta^2 |Zu^\e|^{\beta}Zu^\e 
\\
&+\int_{t_1} ^{t_2}\int_\Omega 
\sum_{i,j=1}^{2n+1}
X^\e_i(A^\e_{i, x_{2n+1}}(x, \nabla_\e u)) \eta^2 |Zu^\e|^{\beta}Zu^\e.
\end{align*}
The left-hand side of the latter equation can be expressed as follows: 
$$\int_{t_1} ^{t_2}\int_\Omega \partial_t Zu^\e \eta^2 |Zu^\e|^{\beta}Zu^\e=
\frac{1}{\beta + 2}\int_{t_1} ^{t_2}\int_\Omega \partial_t |Zu^\e|^{\beta+2} \eta^2. 
$$
Considering the first term in the right-hand side, we obtain
\begin{align*}
& \int_{t_1} ^{t_2}\int_\Omega \sum_{i,j=1}^{2n+1}
X^\e_i(A^\e_{i, \xi_j}(x, \nabla_\e u)X^\e_j Zu^\e) \eta^2 |Zu^\e|^{\beta}Zu^\e = - \int_{t_1} ^{t_2}\int_\Omega \sum_{i,j=1}^{2n+1}
A^\e_{i, \xi_j}(x, \nabla_\e u)X^\e_j Zu^\e 
X^\e_i (\eta^2 |Zu^\e|^{\beta}Zu^\e)
\\
& =  - 2\int_{t_1} ^{t_2}\int_\Omega \sum_{i,j=1}^{2n+1}
A^\e_{i, \xi_j}(x, \nabla_\e u)X^\e_j Zu^\e 
\eta X^\e_i \eta  |Zu^\e|^{\beta}Zu^\e - (\beta +1)\int_{t_1} ^{t_2}\int_\Omega \sum_{i,j=1}^{2n+1}
A^\e_{i, \xi_j}(x, \nabla_\e u)X^\e_j Zu^\e 
\eta^2 |Zu^\e|^{\beta} X^\e_i Zu^\e.
\end{align*}
As for the second term in the right-hand side, we have
\begin{align*}
& \int_{t_1} ^{t_2}\int_\Omega 
\sum_{i,j=1}^{2n+1}
X^\e_i(A^\e_{i, x_{2n+1}}(x, \nabla_\e u)) \eta^2 |Zu^\e|^{\beta}Zu^\e 
= - 2 \int_{t_1} ^{t_2}\int_\Omega 
\sum_{i,j=1}^{2n+1}
A^\e_{i, x_{2n+1}}(x, \nabla_\e u) \eta X^\e_i \eta |Zu^\e|^{\beta}Zu^\e
\\  
& - (\beta + 1)
\int_{t_1} ^{t_2}\int_\Omega 
\sum_{i,j=1}^{2n+1}
A^\e_{i, x_{2n+1}}(x, \nabla_\e u) \eta^2 |Zu^\e|^{\beta} X^\e_i Zu^\e.  
\end{align*}
Combining the latter three equations, we find
\begin{align*}
& \frac{1}{\beta + 2}\int_{t_1} ^{t_2}\int_\Omega \partial_t |Zu^\e|^{\beta+2} \eta^2  +
(\beta + 1)
\int_{t_1} ^{t_2}\int_\Omega
\sum^{2n+1}_{
i,j=1}\p_{\xi_j}A^\e_i(x, \nabla_\e u^\e)X^\e_jZu^\e
\eta^2|Zu^\e|^\beta  X^\e_iZu^\e 
\\
& = -2\int_{t_1}^{t_2}
\int_\Omega\sum^{2n+1}_{
i,j=1}\p_{\xi_j}A^\e_i(x, \nabla_\e u^\e)X^\e_jZu^\e X^\e_i\eta
\eta|Zu^\e|^\beta Zu^\e 
 - 2 \int_{t_1} ^{t_2}\int_\Omega 
\sum_{i,j=1}^{2n+1}
A^\e_{i, x_{2n+1}}(x, \nabla_\e u) \eta X^\e_i \eta |Zu^\e|^{\beta}Zu^\e  
\\
& - (\beta + 1)
\int_{t_1} ^{t_2}\int_\Omega 
\sum_{i,j=1}^{2n+1}
A^\e_{i, x_{2n+1}}(x, \nabla_\e u) \eta^2 |Zu^\e|^{\beta} X^\e_i Zu^\e.
\end{align*}
The structure conditions \eqref{structure} yield
\begin{align*}
& \frac{1}{\beta + 2}\int_\Omega |Zu^\e|^{\beta+2} \eta^2\Big|_{t_1}^{t_2} +\lambda
(\beta + 1)
\int_{t_1} ^{t_2}\int_\Omega
(\ddelta+|\nabla_\e u^\e|^2)^{\frac{p-2}{2}} |\nabla_\e Zu^\e|^2 |Zu^\e|^{\beta} |\eta|^2\le
\\
& \frac{1}{\beta + 2}\int_\Omega |Zu^\e|^{\beta+2} \eta^2 \Big|_{t_1}^{t_2} +
(\beta + 1)
\int_{t_1} ^{t_2}\int_\Omega
\sum^{2n+1}_{
i,j=1}\p_{\xi_j} A^\e_i(x, \nabla_\e u^\e)X^\e_jZu^\e
 X^\e_iZu^\e \;   \eta^2|Zu^\e|^\beta 
\\
& = -2\int_{t_1}^{t_2}
\int_\Omega\sum^{2n+1}_{
i,j=1}\p_{\xi_j}A^\e_i(x, \nabla_\e u^\e)X^\e_jZu^\e X^\e_i\eta
\eta|Zu^\e|^\beta Zu^\e + \frac{2} {\beta + 2}\int_{t_1}^{t_2}\int_\Omega |Zu^\e|^{\beta+2} \eta \partial_t \eta 
\\
& - 2 \int_{t_1} ^{t_2}\int_\Omega 
\sum_{i=1}^{2n+1}
A^\e_{i, x_{2n+1}}(x, \nabla_\e u) \eta X^\e_i \eta |Zu^\e|^{\beta}Zu^\e  
- (\beta + 1)
\int_{t_1} ^{t_2}\int_\Omega 
\sum_{i=1}^{2n+1}
A^\e_{i, x_{2n+1}}(x, \nabla_\e u) \eta^2 |Zu^\e|^{\beta} X^\e_i Zu^\e  
\\
& \le 2\Lambda\int_{t_1}^{t_2}
\int_\Omega (\ddelta+|\nabla_\e u^\e|^2)^{\frac{p-2}{2}}  |\nabla_\e Zu|\eta |\nabla_\e \eta||Zu^\e|^{\beta +1} +
\frac{2} {\beta + 2}\int_{t_1}^{t_2}\int_\Omega |Zu^\e|^{\beta+2} \eta \partial_t \eta 
\\
& + 2 \Lambda \int_{t_1} ^{t_2}\int_\Omega 
(\ddelta+|\nabla_\e u^\e|^2)^{\frac{p-1}{2}} \eta |\nabla_\e \eta| |Zu^\e|^{\beta+1} 
+ (\beta + 1) \Lambda 
\int_{t_1} ^{t_2}\int_\Omega 
(\ddelta+|\nabla_\e u^\e|^2)^{\frac{p-1}{2}}\eta^2 |Zu^\e|^{\beta} |\nabla_\e  Zu^\e|,  
\end{align*}
thus concluding the proof.

\end{proof}

\begin{lemma} \label{lemma3.4} Let $u^\e$ be a weak solution of \eqref{approx1} in $Q$. There exists $C_0 = C_0(n, p,\lambda, \Lambda) > 0.$ For any $t_2\ge t_1\ge 0$,  $\beta \geq 0 $ and all $\eta \in C^\infty_0(\Omega)$,
we have
\begin{align}\label{caccio2}
& \frac{1}{\beta+2}\int_\Omega \eta^2 [(\ddelta+ |\nabla_\e u^\e|^2)^{(\beta +2)/2}]\bigg|_{t_1}^{t_2} + \int_{t_1} ^{t_2} \int_\Omega
\eta^2(\ddelta + |\nabla_\e u^\e|^2)^{(p-2+\beta)/2} \sum_{i,j=1}^{2n+1} |X^\e_i X^\e_j u^\e|^2 
\\
& \leq C_0
\int_{t_1} ^{t_2} \int_\Omega(\eta^2 + |\nabla_\e\eta|^2 + \eta|Z\eta|)
(\ddelta + |\nabla_\e u^\e|^2)^{(p+\beta)/2} 
\notag\\
& + C_0(\beta + 1)^4
\int_{t_1} ^{t_2} \int_\Omega
\eta^2(
\ddelta + |\nabla_\e u^\e|^2)^{(p+\beta-2)/
2} |Zu^\e|^2.
\notag
\end{align}
\end{lemma}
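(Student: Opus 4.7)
The plan is to differentiate \eqref{approx1} in each horizontal direction $X^\e_\ell$, $\ell = 1,\dots, 2n$, and separately in $Z$, and then test the $\ell$-th PDE from Lemma \ref{diff-PDE} against $\phi_\ell = \eta^2 (\ddelta + |\nabla_\e u^\e|^2)^{\beta/2} X^\e_\ell u^\e$ and the equation from Lemma \ref{eqZ} against $\e^2\eta^2 (\ddelta + |\nabla_\e u^\e|^2)^{\beta/2} Zu^\e$. The $\e^2$ factor puts the $Z$-equation on the same footing as the $X^\e_\ell$-equations under the convention $X^\e_{2n+1}=\e Z$, so that after summation the time-derivative contributions reassemble as $\sum_{\ell=1}^{2n+1} X^\e_\ell u^\e\,\p_t X^\e_\ell u^\e = \tfrac12 \p_t |\nabla_\e u^\e|^2$. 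Integrating in $t$ against $\eta^2(\ddelta+|\nabla_\e u^\e|^2)^{\beta/2}$ and using that $\eta$ is time-independent yields the boundary term $\frac{1}{\beta+2}\bigl[\int_\Om \eta^2(\ddelta+|\nabla_\e u^\e|^2)^{(\beta+2)/2}\bigr]_{t_1}^{t_2}$ on the LHS of \eqref{caccio2}.

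For the principal elliptic part, after one integration by parts in $X^\e_i$ one obtains $\sum_{i,j,\ell}\int A^\e_{i,\xi_j}(x,\nabla_\e u^\e)\, X^\e_j X^\e_\ell u^\e\, X^\e_i\phi_\ell$. I would split $X^\e_i\phi_\ell$ into the three natural contributions. The one involving $X^\e_i X^\e_\ell u^\e$ is bounded below, via the ellipticity lower bound in \eqref{structure-epsilon}, by $\lambda\int\eta^2(\ddelta+|\nabla_\e u^\e|^2)^{(p-2+\beta)/2}\sum_{i,j=1}^{2n+1}|X^\e_i X^\e_j u^\e|^2$, producing the second LHS term of \eqref{caccio2}. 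The contribution in which $X^\e_i$ hits the weight reduces, after summing over $\ell$, to a positive-definite quadratic form in $X^\e|\nabla_\e u^\e|^2$ and may be discarded. The contribution from $X^\e_i(\eta^2)$ is absorbed into the principal form by Young's inequality together with the upper bound on $A^\e_{i,\xi_j}$, giving the $|\nabla_\e\eta|^2(\ddelta+|\nabla_\e u^\e|^2)^{(p+\beta)/2}$ summand. The two lower-order groups in \eqref{eqderivX} involving $A^\e_{i,x_\ell}$ and $A^\e_{i,x_{2n+1}}$, together with the lower-order group of Lemma \ref{eqZ}, are estimated directly using the second line of \eqref{structure-epsilon} and contribute to the $\eta^2(\ddelta+|\nabla_\e u^\e|^2)^{(p+\beta)/2}$ summand.

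The delicate step is the $Z$-commutator term $s_\ell Z(A^\e_{\ell+s_\ell n}(x,\nabla_\e u^\e))$ in \eqref{eqderivX}, which appears only for $\ell\le 2n$. Testing against $\phi_\ell$ and integrating by parts in $Z$ gives $-s_\ell\int A^\e_{\ell+s_\ell n}\, Z\phi_\ell$. Expanding $Z\phi_\ell$ yields three pieces: a $Z\eta$-piece controlled directly by $\eta|Z\eta|(\ddelta+|\nabla_\e u^\e|^2)^{(p+\beta)/2}$ via the upper structure bound, which accounts for the $\eta|Z\eta|$ factor on the right of \eqref{caccio2}; a weight-piece containing $Z|\nabla_\e u^\e|^2 = 2\sum_m X^\e_m u^\e\, X^\e_m Zu^\e$ (using $[X^\e_m,Z]=0$); and a piece $\eta^2(\ddelta+|\nabla_\e u^\e|^2)^{\beta/2}\, X^\e_\ell Zu^\e$ arising from $ZX^\e_\ell u^\e = X^\e_\ell Zu^\e$. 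For each of the last two I would integrate by parts once more in the appropriate $X^\e$-direction to remove the horizontal derivative from $Zu^\e$, obtaining a bilinear form in $Zu^\e$ and $X^\e_i X^\e_j u^\e$ with a prefactor of order $(\beta+1)^2$ coming from the two successive weight differentiations. Applying Young's inequality with absorption constant small enough to absorb a fraction of the principal LHS second-derivative term then squares this prefactor, giving the $(\beta+1)^4$ coefficient of $|Zu^\e|^2$ on the right of \eqref{caccio2}.

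The main obstacle I foresee is this $\beta$-bookkeeping: one must simultaneously (a) choose the Young absorption parameters so that the second-derivative quadratic form on the LHS retains a positive coefficient independent of $\beta$, and (b) arrange the $\beta$ factors so that at most four copies of $\beta+1$ end up multiplying $|Zu^\e|^2$ on the RHS. A minor technical point is that for $\e>0$ and $\ddelta>0$, by the classical parabolic regularity theory noted after \eqref{alpha}, $u^\e$ is smooth on compact subsets of $Q$, so every test function and integration by parts in the above argument is rigorously justified without any further regularization step.
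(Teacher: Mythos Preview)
Your proposal is correct and follows essentially the same route as the paper: test the differentiated equations against $\phi_\ell = \eta^2(\ddelta+|\nabla_\e u^\e|^2)^{\beta/2}X^\e_\ell u^\e$, extract the principal elliptic term and the nonnegative weight-quadratic via coercivity, and handle the commutator term $s_\ell Z(A^\e_{\ell+s_\ell n})$ by two successive integrations by parts (first in $Z$, then in the horizontal directions) before applying Young's inequality with the $(\beta+1)^4$ bookkeeping. Your explicit inclusion of the $Z$-equation with weight $\e^2$ is a clean way to make the identity $\sum_\ell X^\e_\ell u^\e\,\p_t X^\e_\ell u^\e = \tfrac12\,\p_t|\nabla_\e u^\e|^2$ exact; the paper sums only over $\ell\le 2n$ and invokes the chain rule somewhat loosely at that step.
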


\begin{proof} In view of Lemma \ref{diff-PDE} we know that, if  $u^\e\in C^{\infty}(Q)$ is a solution of
$\p_t u^\e= \sum_{i=1}^{2n+1} X^\e_i A^{\e}_i(x, \nabla_\e u^\e),$
then  $v^\e_\ell=X^\e_\ell u^\e$ solves \eqref{eqderivX}. If in the first term in the right-hand side of \eqref{eqderivX} we use the fact that $X^\e_\ell X^\e_ju^\e = X^\e_j X^\e_\ell u^\e + [X^\e_\ell,X^\e_j] u^\e = X^\e_j v^\e_\ell + s_\ell Z v^\e_\ell$, we find
\begin{equation}\label{weak2}
\p_t v^\e_\ell
=
\sum_{i,j=1}^{2n+1} X^\e_i\Big( A_{i, \xi_j}^\e (x, \nabla_\e u^\e) 
X^\e_j v^\e_\ell \Big)
+ s_\ell \sum_{i=1}^{2n+1} X^\e_i\Big( A_{i, \xi_{\ell+s_{\ell}n}}^\e (x, \nabla_\e u^\e) 
Z u^\e \Big)
+ \end{equation}
$$ +
\sum_{i=1}^{2n+1} X^\e_i\Big( A_{i, x_\ell}^\e (x, \nabla_\e u^\e) - \frac{s_\ell x_{\ell+s_\ell n}}{2} A_{i, x_{2n+1}}^\e (x, \nabla_\e u^\e) \Big) + s_\ell Z (A_{\ell+s_\ell n}^\e (x, \nabla_\e u^\e)). $$

Fix $\eta\in C^{\infty}_0(\Omega)$ and
let $\phi = \eta^2
(\ddelta+|\nabla_\e u^\e|^2
)^{\beta/2}X^\e_\ell u^\e$. Taking such $\phi$ as the 
test-function in the weak form of \eqref{weak2}, and integrating by parts the terms in divergence form, one has
{\allowdisplaybreaks
\begin{align*}
& \frac{1}{2} \int_{t_1}^{t_2}\int_\Om (\ddelta+|\nabla_\e u^\e|^2)^{\frac{\beta}{2}}\p_t \bigg[ X_\ell^\e u^\e\bigg]^2 \eta^2  
\\
& + \sum_{i,j=1}^{2n+1}  \int_{t_1}^{t_2}\int_\Om A^\e_{i \xi_j} (x, \nabla_\e u^\e) X^\e_j v^\e_{\ell}  X^\e_i\bigg(\eta^2
(\ddelta+|\nabla_\e u^\e|^2
)^{\beta/2}X^\e_\ell u^\e \bigg) 
\\
& =- s_\ell \sum_{i=1}^{2n+1}\int_{t_1}^{t_2}\int_\Om A^\e_{i \xi_{\ell+s_{\ell} n}} (x, \nabla_\e u^\e) Zu^\e X^\e_i \bigg(\eta^2
(\ddelta+|\nabla_\e u^\e|^2
)^{\beta/2}X^\e_\ell u^\e \bigg) 
\\
& +
\int_{t_1}^{t_2}\int_\Om \sum_{i=1}^{2n+1} X^\e_i\Big(A_{i, x_\ell}^\e (x, \nabla_\e u^\e) - \frac{s_lx_{\ell+s_{\ell} n}}{2} A_{i, x_{2n+1}}^\e (x, \nabla_\e u^\e) \Big)\eta^2
(\ddelta+|\nabla_\e u^\e|^2
)^{\beta/2}X^\e_\ell u^\e 
\\
&
+ s_{\ell}
 \int_{t_1}^{t_2}\int_\Om  s_l Z (A_{\ell+s_{\ell} n}^\e (x, \nabla_\e u^\e))\eta^2  (\ddelta+|\nabla_\e u^\e|^2)^{\beta/2} X^\e_\ell u^\e.
 \end{align*}}
The latter equation implies that for every $\ell=1,...,2n$ one has
\begin{align*}
& \frac{1}{\beta+2} \int_{t_1}^{t_2}\int_\Om \frac{1}{2} \int_{t_1}^{t_2}\int_\Om (\ddelta+|\nabla_\e u^\e|^2)^{\frac{\beta}{2}}\p_t \bigg[ X_\ell^\e u^\e\bigg]^2 \eta^2
\\
& +
  \sum_{i,j=1}^{2n+1} \int_{t_1}^{t_2}\int_\Om A^\e_{i \xi_j} (x, \nabla_\e u^\e) X^\e_j X^\e_\ell u^\e X^\e_iX^\e_\ell u^\e \  \eta^2 (\ddelta+|\nabla_\e u^\e|^2
)^{\beta/2}
\\
& + \sum_{i,j=1}^{2n+1} \frac{\beta}{2} \int_{t_1}^{t_2}\int_\Om A^\e_{i \xi_j} (x, \nabla_\e u^\e) X^\e_j X^\e_\ell u^\e X^\e_\ell u^\e X^\e_i( |\nabla_\e u^\e|^2) \  \eta^2 (\ddelta+|\nabla_\e u^\e|^2
)^{\frac{\beta-2}{2}} 
\\
& =
- \sum_{i,j=1}^{2n+1}\int_{t_1}^{t_2}\int_\Om A^\e_{i \xi_j} (x, \nabla_\e u^\e) X^\e_j
X^\e_\ell u^\e X^\e_\ell u^\e X^\e_i(\eta^2) (\ddelta+|\nabla_\e u|^2
)^{\beta/2}   
\\
& - s_\ell \sum_{i=1}^{2n+1}\int_{t_1}^{t_2}\int_\Om A^\e_{i \xi_{\ell+s_{\ell} n}}(x,\nabla_\e u^\e) Zu X^\e_i \bigg(\eta^2
(\ddelta+|\nabla_\e u^\e|^2
)^{\beta/2}X^\e_\ell u^\e \bigg)
\\
& - 
\int_{t_1}^{t_2}\int_\Om \sum_{i=1}^{2n+1}\Big( A_{i, x_{\ell}}^\e (x, \nabla_\e u^\e) - \frac{s_\ell x_{\ell+s_{\ell} n}}{2} A_{i, x_{2n+1}}^\e (x, \nabla_\e u^\e) \Big)  X^\e_i\Big(\eta^2
(\ddelta+|\nabla_\e u^\e|^2
)^{\beta/2}X^\e_\ell u^\e\Big)
\\
& +s_\ell \sum_{j=1}^{2n+1} \int_{t_1}^{t_2}\int_\Om Z (A_{\ell+s_{\ell} n}^\e (x, \nabla_\e u^\e)) \eta^2  (\ddelta+|\nabla_\e
u^\e|^2)^{\beta/2} X^\e_\ell u^\e 
=I^1_\ell+I^2_\ell+I^3_\ell + I^4_\ell.
\end{align*}
Summing over $\ell=1,...,2n$, by a simple application of the chain rule, and using the structural assumption \eqref{structure-epsilon}, we see that the left-hand side can be bounded from below by 
\begin{align*}
& \frac{1}{\beta+2} \int_{t_1}^{t_2}\int_\Om \p_t \bigg[(\ddelta+|\nabla_\e u^\e|^2)^{\frac{\beta}{2}+1}\bigg] \eta^2
\\
& + \sum_{\ell=1}^{2n}\sum_{i,j=1}^{2n+1}
   \int_{t_1}^{t_2}\int_\Om \eta^2 A^\e_{i \xi_j} (x,\nabla_\e u^\e) X^\e_j X^\e_\ell u^\e X^\e_iX^\e_\ell u^\e  (\ddelta+|\nabla_\e u^\e|^2
)^{\beta/2} 
\\
& +\sum_{\ell=1}^{2n}\sum_{i,j=1}^{2n+1} \frac{\beta}{2} \int_{t_1}^{t_2}\int_\Om \eta^2 A^\e_{i \xi_j} (x,\nabla_\e u^\e) X^\e_j X^\e_\ell u^\e X^\e_\ell u^\e X^\e_i( |\nabla_\e u^\e|^2)  (\ddelta+|\nabla_\e u^\e|^2
)^{\frac{\beta-2}{2}}
\\
& \ge \frac{1}{\beta+2} \int_{t_1}^{t_2}\int_\Om \p_t \bigg[ (\ddelta+|\nabla_\e u^\e|^2)^{\frac{\beta}{2}+1}\bigg] \eta^2  + 
\lambda \int_{t_1}^{t_2}\int_\Om \eta^2
(\ddelta+|\nabla_\e u^\e|^2)^{\frac{p-2+\beta}{2}} \sum_{i,j=1}^{2n+1} |X^\e_i X_j^\e u^\e|^2 
\\
& +  \frac{\lambda\beta}{4}\int_{t_1}^{t_2}\int_\Om
  \eta^2 (\ddelta+|\nabla_\e u^\e|^2
)^{\frac{p+\beta-4}{2}} |\nabla^\e( |\nabla_\e u^\e |^2)|^2.  
\end{align*}
Since the last term in the right-hand side of this estimate is nonnegative, we obtain from this bound
\begin{align}\label{gnam}
& \frac{1}{\beta+2} \int_\Omega [ (\ddelta+ |\nabla_\e u^\e|^2)^{\frac{\beta}{2}+1}\eta^2]\bigg|_{t_1}^{t_2}   + 
\lambda \int_{t_1}^{t_2}\int_\Om \eta^2
(\ddelta+|\nabla_\e u^\e|^2)^{\frac{p-2+\beta}{2}} \sum_{i,j=1}^{2n+1} |X^\e_i X_j^\e u^\e|^2 
\\
& \le \sum_{\ell = 1}^{2n} \left(I^1_\ell+I^2_\ell+I^3_\ell + I^4_\ell\right).
\notag
\end{align}
Next, we estimate each of the terms in the right-hand side  separately.  Recalling that from \eqref{structure-epsilon} one has $|A_{i\xi_j}(x,\eta)| = |\p_{\xi_j} A_i^\e(x,\eta)| \le C(\ddelta+|\eta|^2)^{\frac{p-2}{2}}$, one has that for any $\alpha>0$ there exists $C_\alpha>0$ depending only on $\alpha, p, n$ and the structure constants, such that
{\allowdisplaybreaks
\begin{align}\label{gnam2}
& \sum_{\ell=1}^{2n} I^1_\ell =-\sum_{\ell=1}^{2n} \sum_{i,j=1}^{2n+1}\int_{t_1}^{t_2}\int_\Om A^\e_{i \xi_j} (x, \nabla_\e u^\e) X^\e_j X^\e_\ell u^\e  X^\e_\ell u^\e X^\e_i(\eta^2)
(\ddelta+|\nabla_\e u^\e|^2
)^{\beta/2}
\\
& \le 2\sum_{i,j=1}^{2n+1}\int_{t_1}^{t_2}\int_\Om |\eta| (\ddelta+|\nabla_\e u^\e|^2)^{(p-2)/2}  |X^\e_j X^\e_i u^\e|  |\nabla_\e u^\e| |\nabla_\e \eta|(\ddelta+|\nabla_\e u^\e|^2
)^{\frac{\beta}{2}}
\notag\\
& \le \al \sum_{i,j=1}^{2n+1}\int_{t_1}^{t_2}\int_\Om \eta^2 (\ddelta+|\nabla_\e u^\e|^2)^{(p+\beta-2)/2} 
|X^\e_j X^\e_i u^\e|^2  + C_\alpha\int_{t_1}^{t_2}\int_\Om (\ddelta+|\nabla_\e u^\e|^2)^{(p+\beta)/2} |\nabla_\e \eta|^2.
\notag\end{align}}
Analogously, we find
\begin{align}\label{gnam3}
& \sum_{\ell=1}^{2n} I^2_\ell
\le \al \sum_{i,j=1}^{2n+1}\int_{t_1}^{t_2}\int_\Om \eta^2 (\ddelta+|\nabla_\e u^\e|^2)^{(p+\beta-2)/2} 
 |X^\e_i X^\e_j u^\e|^2
 \\
 & + C\int_{t_1}^{t_2}\int_\Om (\ddelta+|\nabla_\e u^\e|^2)^{(p+\beta)/2} |\nabla_\e \eta|^2 
+ C_\al (\beta +1)^2\int_{t_1}^{t_2}\int_\Om \eta^2 (\ddelta+|\nabla_\e u^\e|^2)^{(p+\beta-2)/2} |Z u^\e|^2.
\notag\end{align}
In a similar fashion, we obtain
\begin{align}\label{gnam4}
& \sum_{\ell=1}^{2n} I^3_\ell
\le \al \sum_{i,j=1}^{2n+1}\int_{t_1}^{t_2}\int_\Om (\ddelta+|\nabla_\e u^\e|^2)^{(p+\beta-2)/2} |X^\e_i X^\e_j u^\e|^2
\eta^2 
\\
&+ C_\alpha(\beta +1)^2\int_{t_1}^{t_2}\int_\Om (\ddelta+|\nabla_\e u^\e|^2)^{(p+\beta)/2} (|\nabla_\e \eta|^2 +|\eta|^2).
\notag
\end{align}
Finally, integrating by parts twice, and using the structural assumptions, one has
{\allowdisplaybreaks
\begin{align}\label{gnam5}
& \sum_{\ell=1}^{2n} I^4_\ell =
 -\sum_{\ell=1}^{2n} \int_{t_1}^{t_2}\int_\Om Z (A_{\ell+s_{\ell}n} (x, \nabla_\e u^\e) ) \eta^2 (\ddelta+|\nabla_\e u^\e|^2)^{\beta/2} X^\e_\ell u^\e
 \\
& = 2\sum_{\ell=1}^{2n} \int_{t_1}^{t_2}\int_\Om  A_{\ell+s_{\ell}n} (x, \nabla_\e u^\e) \eta Z\eta (\ddelta+|\nabla_\e u^\e|^2)^{\beta/2} X^\e_\ell u^\e
\notag\\
& +\beta \sum_{\ell=1}^{2n} \sum_{j=1}^{2n+1}\int_{t_1}^{t_2}\int_\Om  A_{\ell+s_{\ell}n} (x, \nabla_\e u^\e) \eta^2 (\ddelta+|\nabla_\e u^\e|^2)^{\frac{\beta-2}{2}} X_j u^\e X_j Zu^\e X^\e_\ell u^\e
\notag\\
& +\sum_{\ell=1}^{2n} \int_{t_1}^{t_2}\int_\Om  A_{\ell+s_{\ell}n} (x, \nabla_\e u^\e) \eta^2 (\ddelta+|\nabla_\e u^\e|^2)^{\beta/2} X^\e_\ell Zu^\e 
\notag\\
& = 2\sum_{\ell=1}^{2n} \int_{t_1}^{t_2}\int_\Om  A_{\ell+s_{\ell}n} (x, \nabla_\e u^\e) \eta Z\eta (\ddelta+|\nabla_\e u^\e|^2)^{\beta/2} X^\e_\ell u^\e 
\notag\\
& -\beta \sum_{\ell=1}^{2n} \sum_{j=1}^{2n+1}\int_{t_1}^{t_2}\int_\Om  X_j \bigg(A_{\ell+s_{\ell}n} (x, \nabla_\e u^\e) \eta^2 (\ddelta+|\nabla_\e u^\e|^2)^{\frac{\beta-2}{2}} X_j u^\e X^\e_\ell u^\e\bigg) Zu^\e
\notag\\
& -\sum_{\ell=1}^{2n} \int_{t_1}^{t_2}\int_\Om  X^\e_\ell\bigg( A_{\ell+s_{\ell}n} (x, \nabla_\e u^\e) \eta^2 (\ddelta+|\nabla_\e u^\e|^2)^{\beta/2} \bigg)Zu^\e 
\notag\\
& \le \al \sum_{i,j=1}^{2n+1}\int_{t_1}^{t_2}\int_\Om (\ddelta+|\nabla_\e u^\e|^2)^{(p+\beta-2)/2} 
 |X^\e_i X^\e_j u^\e|^2
 \eta^2
 \notag\\
 & + C(\beta + 1)\int_{t_1}^{t_2}\int_\Om (\ddelta+|\nabla_\e u^\e|^2)^{(p+\beta)/2}\Big(\eta^2 + |\nabla_\e \eta|^2 + |\eta Z\eta|\Big) 
\notag\\
& + C_\alpha (\beta + 1)^4\int_{t_1}^{t_2}\int_\Om (\ddelta+|\nabla_\e u^\e|^2)^{(p+\beta-2)/2} |Z u^\e|^2.
\notag\end{align}
}
Combining \eqref{gnam2}-\eqref{gnam5} with \eqref{gnam}, we reach the desired conclusion \eqref{caccio2}.

\end{proof}

In the case $\beta=0$ we obtain the following stronger estimate, which we will need in the sequel. We denote by $||\cdot||$ the $L^\infty$ norm of a function on the parabolic cylinder $Q$.

\begin{lemma} \label{lemma3.4} Let $u^\e$ be a weak solution of \eqref{approx1} in $Q$, let $t_2\ge t_1\ge 0$,   and $\eta\in C^{1}([0,T], C^\infty_0 (\Om))$  be such that $0\le \eta \le1$, and for which
$ ||\partial_t\eta||\leq C ||\nabla_\e\eta||^2$, where $C>0$ is a universal constant.
 For every $\alpha>0$ there exists $C_\alpha>0$ such that 
\begin{align*}
& \frac{1}{2}\int_\Omega ( (\ddelta+ |\nabla_\e u^\e|^2)\eta^2)\Big|_{t_1}^{t_2}   + 
\lambda\int_{t_1}^{t_2}\int_\Om
(\ddelta+|\nabla_\e u^\e|^2)^{\frac{p-2 }2} \sum_{i,j=1}^{2n+1} |X^\e_i X_j^\e u^\e|^2 \eta^2
\\
& \leq 
 \alpha   
\int_{t_1}^{t_2}\int_\Omega |Zu^\e|^{2} \eta^3
 + C_\alpha\int_{t_1}^{t_2}\int_\Om (\ddelta+|\nabla_\e u^\e|^2)^{p/2}\Big(\eta^2 + |\nabla_\e \eta|^2 + |\eta Z\eta|\Big).
 \end{align*}
\end{lemma}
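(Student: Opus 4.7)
The plan is to replay the proof of Lemma~\ref{lemma3.4} in the particular case $\beta=0$, with a careful refinement at the points where the weighted $|Zu^\e|^2$ cross-term appears. First, I plug the test function $\phi_\ell=\eta^2 X^\e_\ell u^\e$ into the weak form of \eqref{weak2} for $\ell=1,\dots,2n$ and sum. The structural hypothesis \eqref{structure-epsilon} then produces on the left-hand side both the parabolic boundary term $\tfrac12\int(\ddelta+|\nabla_\e u^\e|^2)\eta^2|_{t_1}^{t_2}$ and the quadratic form $\lambda\int(\ddelta+|\nabla_\e u^\e|^2)^{(p-2)/2}\sum|X^\e_iX^\e_ju^\e|^2\eta^2$. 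The only feature absent in the generic-$\beta$ proof is the interior correction $-\int(\ddelta+|\nabla_\e u^\e|^2)\eta\,\partial_t\eta$ generated by the time derivative (it did not appear before because the old test function carried the weight $(\ddelta+|\nabla_\e u^\e|^2)^{\beta/2}$); this is dispatched by combining the new hypothesis $|\partial_t\eta|\le C|\nabla_\e\eta|^2$ with the pointwise bound $(\ddelta+|\nabla_\e u^\e|^2)\le \delta^{1-p/2}(\ddelta+|\nabla_\e u^\e|^2)^{p/2}$ (valid for $p\ge 2$ because $\delta>0$), which absorbs it into the $C_\alpha\int(\ddelta+|\nabla_\e u^\e|^2)^{p/2}|\nabla_\e\eta|^2$ term on the right.

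The four right-hand side pieces $I^1,I^2,I^3,I^4$ are estimated as in the earlier proof: purely gradient-type contributions land in $C_\alpha\int(\ddelta+|\nabla_\e u^\e|^2)^{p/2}(\eta^2+|\nabla_\e\eta|^2+|\eta Z\eta|)$, and the second-order contributions carry a small parameter so that they can be absorbed into the LHS. The factor $|\eta Z\eta|$ appears naturally from integration by parts of the $Z$-derivative inside $I^4$, after distributing $X^\e_\ell$ on the product $A^\e_{\ell+s_\ell n}\eta^2$. The key departure lies in the Young step that originally produced the offending cross-term $C\int(\ddelta+|\nabla_\e u^\e|^2)^{(p-2)/2}|Zu^\e|^2\eta^2$: instead of leaving this as-is, I would recast the underlying Young so that one factor is $\eta^{3/2}|Zu^\e|$ and the other is the sub-critical weight $(\ddelta+|\nabla_\e u^\e|^2)^{(p-2)/2}$ paired with a compensating $\eta^{1/2}$ and either $|\nabla_\e u^\e|$ or $|\nabla_\e\eta|$. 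Squaring the first factor gives exactly $\eta^3|Zu^\e|^2$ with arbitrarily small coefficient, while the squared second factor, namely an expression of the shape $(\ddelta+|\nabla_\e u^\e|^2)^{p-2}\cdot\text{cutoff}$, is then controlled by $C_\alpha(\ddelta+|\nabla_\e u^\e|^2)^{p/2}\cdot\text{cutoff}$ through the lower bound $\ddelta+|\nabla_\e u^\e|^2\ge \delta$ (directly for $2\le p\le 4$ via $(\ddelta+|\nabla_\e u^\e|^2)^{p-2}\le \delta^{p/2-2}(\ddelta+|\nabla_\e u^\e|^2)^{p/2}$, and by a further Young's step for $p>4$).

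The main obstacle is precisely this refined Young-plus-$\delta$-bound step, and it is the only place in the argument where the hypothesis $\delta>0$ is truly indispensable. Without it, the sub-critical weight $(\ddelta+|\nabla_\e u^\e|^2)^{p-2}$ left over after the Young splitting cannot be dominated by the critical power $(\ddelta+|\nabla_\e u^\e|^2)^{p/2}$, and the coefficient of $|Zu^\e|^2\eta^3$ cannot be made arbitrarily small. The exponent $\eta^3$ in the target, as opposed to the $\eta^2$ of Lemma~\ref{lemma3.4}, is forced by the $\eta^{1/2}$ overhead needed to balance the cutoff powers between the two Young factors once $|Zu^\e|^2$ is required to appear without its sub-critical gradient weight.
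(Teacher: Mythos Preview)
Your plan has a genuine gap at precisely the point you single out as the ``key departure''. Recall what the offending cross-term looks like before any Young inequality is applied. In $I^2_\ell$ (with $\beta=0$) one has, after expanding $X^\e_i(\eta^2 X^\e_\ell u^\e)$,
\[
\int_{t_1}^{t_2}\int_\Omega A^\e_{i,\xi_{\ell+s_\ell n}}\,Zu^\e\,\eta^2\,X^\e_iX^\e_\ell u^\e
\;\lesssim\;
\int_{t_1}^{t_2}\int_\Omega (\delta+|\nabla_\e u^\e|^2)^{\frac{p-2}{2}}\,|Zu^\e|\,|X^\e_iX^\e_\ell u^\e|\,\eta^2.
\]
This product has three factors: the weight, $|Zu^\e|$, and a \emph{second} derivative $|X^\e_iX^\e_\ell u^\e|$; there is no first-order factor ``$|\nabla_\e u^\e|$ or $|\nabla_\e\eta|$'' as your description assumes. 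If you force the split $a=\eta^{3/2}|Zu^\e|$, the complementary factor is $b=(\delta+|\nabla_\e u^\e|^2)^{(p-2)/2}|X^\e_iX^\e_\ell u^\e|\,\eta^{1/2}$, and $b^2=(\delta+|\nabla_\e u^\e|^2)^{p-2}|X^\e_iX^\e_\ell u^\e|^2\,\eta$. This term cannot be absorbed into the left-hand side (its coefficient is $C_\alpha$, not $\alpha$, and the weight is $(\delta+|\nabla_\e u^\e|^2)^{p-2}$ rather than $(\delta+|\nabla_\e u^\e|^2)^{(p-2)/2}$), and it is not dominated by the admissible right-hand side $C_\alpha\int(\delta+|\nabla_\e u^\e|^2)^{p/2}(\cdots)$ either, because the $|X^\e_iX^\e_\ell u^\e|^2$ factor survives. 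The $\delta$-lower bound does not help: you would need $(\delta+|\nabla_\e u^\e|^2)^{(p-2)/2}\le C$, which fails for $p>2$. The same obstruction arises in $I^4_\ell$ after you integrate the $Z$ by parts and then distribute $X^\e_\ell$, since a term of the identical shape reappears.

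The paper's proof avoids this dilemma altogether by a different mechanism: it integrates by parts \emph{once more} to trade $Zu^\e$ for $\nabla_\e Zu^\e$, then applies a Young inequality scaled by $\|\nabla_\e\eta\|^{-2}$ to produce $\frac{\alpha}{\|\nabla_\e\eta\|^2}\int\eta^4(\delta+|\nabla_\e u^\e|^2)^{(p-2)/2}|\nabla_\e Zu^\e|^2$, and finally invokes the Caccioppoli inequality for $Zu^\e$ (Lemma~\ref{stimaZu}) on this quantity. It is the combination of Lemma~\ref{stimaZu} with the hypothesis $\|\partial_t\eta\|\le C\|\nabla_\e\eta\|^2$ that manufactures the unweighted $\alpha\int|Zu^\e|^2\eta^3$ term; the remaining contributions from Lemma~\ref{stimaZu} carry the weight $(\delta+|\nabla_\e u^\e|^2)^{(p-2)/2}$ in front of $|Zu^\e|^2$, but with a small coefficient, and \emph{those} are absorbed into the left-hand side via $|Zu^\e|^2\le C\sum|X^\e_iX^\e_ju^\e|^2$. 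No purely pointwise Young-plus-$\delta$ argument can substitute for this step.
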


\begin{proof} In view of Lemma \ref{diff-PDE} we notice that, if  $u^\e\in C^{\infty}(Q)$ is a solution of
$\p_t u^\e= \sum_{i=1}^{2n+1} X^\e_i A^{\e}_i(x, \nabla_\e u^\e)$,
then  $v^\e_\ell=X^\e_\ell u^\e$ solves
\begin{align}\label{weak2}
 \p_t v^\e_\ell
& =
\sum_{i,j=1}^{2n+1} X^\e_i\Big( X_\ell^\e(A_{i}^\e (x, \nabla_\e u^\e) )
 \Big)
 +
\sum_{i=1}^{2n+1} X^\e_i\Big( A_{i, x_\ell}^\e (x, \nabla_\e u^\e) - \frac{s_{\ell} x_{\ell+s_{\ell}n}}{2} A_{i, x_{2n+1}}^\e (x, \nabla_\e u^\e) \Big) 
\\
&+ s_\ell Z (A_{\ell+s_{\ell}n}^\e (x, \nabla_\e u^\e)).
\notag
\end{align}
With $\eta$ as in the statement of the lemma, we take  
$\phi = \eta^2
X^\e_\ell u^\e$ as a 
test function in the weak form of \eqref{weak2}. Integrating by parts  the terms in divergence form,  one has
\begin{align*}
& \frac{1}{2} \int_{t_1}^{t_2}\int_\Om \eta^2 \p_t  (X_\ell^\e  u^\e)^2   + 
\sum_{i=1}^{2n+1}  \int_{t_1}^{t_2}\int_\Om X_\ell (A^\e_{i } (x, \nabla_\e u^\e) ) X^\e_i\bigg(\eta^2
X^\e_\ell u^\e \bigg) 
\\
& =
\int_{t_1}^{t_2}\int_\Om \eta^2 \sum_{i=1}^{2n+1} X^\e_i\Big(A_{i, x_\ell}^\e (x,\nabla_\e u^\e) - \frac{s_\ell x_{\ell+s_{\ell}n}}{2} A_{i, x_{2n+1}}^\e (x, \nabla_\e u^\e) \Big)
X^\e_\ell u^\e
\\
& + s_\ell
 \int_{t_1}^{t_2}\int_\Om  \eta^2  Z (A_{\ell+s_{\ell}n}^\e (x, \nabla_\e u^\e))  X^\e_{\ell} u^\e.
\end{align*}
The gives 
\begin{align*}
& \frac{1}{ 2} \int_{t_1}^{t_2}\int_\Om \eta^2 \p_t (X_\ell^\e u^\e)^2   +\sum_{i,j=1}^{2n+1}  \int_{t_1}^{t_2}\int_\Om \eta^2 A^\e_{i, \xi_j } (x, \nabla_\e u^\e) X^\e_\ell X^\e_j u^\e  
X^\e_\ell X^\e_i u^\e 
\\
& = - \sum_{i,j=1}^{2n+1}  \int_{t_1}^{t_2}\int_\Om \eta^2 X_\ell (A^\e_{i } (x, \nabla_\e u^\e) )  
Z u^\e - \sum_{i,j=1}^{2n+1}  \int_{t_1}^{t_2}\int_\Om X_\ell (A^\e_{i } (x, \nabla_\e u^\e) )  \eta X^\e_i\eta X^\e_\ell u^\e
\\
& - \int_{t_1}^{t_2}\int_\Om \sum_{i=1}^{2n+1}\Big( A_{i, x_\ell}^\e (x, \nabla_\e u^\e) - \frac{s_\ell x_{\ell+s_{\ell}n}}{2} A_{i, x_{2n+1}}^\e (x,\nabla_\e u^\e) \Big)  X^\e_i\Big( \eta^2
X^\e_\ell u^\e \Big)
\\
& + s_\ell \sum_{j=1}^{2n+1} \int_{t_1}^{t_2}\int_\Om \eta^2  Z (A_{\ell+s_{\ell}n}^\e (x,\nabla_\e u^\e))   X^\e_\ell u^\e
=I^1_\ell+I^2_\ell+I^3_\ell + I^4_\ell.
\end{align*}
Summing over $\ell=1,...,2n$, in view of the structural hypothesis \eqref{structure-epsilon}, after an integration by parts in the first term in the left-hand side we obtain the following bound
\begin{align*}
& \frac 12 \int_\Om   (\ddelta+|\nabla_\e u^\e|^2) \eta^2 \Big|^{t_2}_{t_1} + 
\lambda\int_{t_1}^{t_2}\int_\Om
(\ddelta+|\nabla_\e u^\e|^2)^{\frac{p-2 }2} \sum_{i,j=1}^{2n+1} |X^\e_i X_j^\e u^\e|^2 \eta^2
\\
& \leq I^1_\ell+I^2_\ell+I^3_\ell + I^4_\ell + \int_{t_1}^{t_2}\int_\Om (\ddelta+|\nabla_\e u^\e|^2) \eta\partial_t\eta.
\end{align*}
Next, we estimate each of the terms in the right-hand side  separately.  Recalling that $|A^\e_{i\xi_j}(x,\eta)|\le C(\ddelta+|\eta|^2)^{\frac{p-2}{2}},$ we find that for any $\alpha_1, \alpha_2 >0$ there exist $C_{\alpha_1}, C_{\alpha_2} >0$, depending only on $\alpha_1, \alpha_2, p, n$ and the structure constants, such that
{\allowdisplaybreaks
\begin{align*}
& \sum_{\ell=1}^{2n} I^1_\ell=\sum_{\ell=1}^{2n} 
\sum_{i,j=1}^{2n+1}  \int_{t_1}^{t_2}\int_\Om \eta^2 X_\ell (A^\e_{i } (x, \nabla_\e u^\e))  
Z u^\e 
\\
&=-2\sum_{\ell=1}^{2n} 
\sum_{i=1}^{2n+1}  \int_{t_1}^{t_2}\int_\Om A^\e_{i } (x, \nabla_\e u^\e)   \eta X_\ell \eta
Z u^\e 
-\sum_{\ell=1}^{2n} 
\sum_{i=1}^{2n+1}  \int_{t_1}^{t_2}\int_\Om \eta^2 A^\e_{i }(x, \nabla_\e u^\e)   
X^\e_\ell Z u^\e 
\\
& =-2\sum_{\ell=1}^{2n} 
\sum_{i=1}^{2n+1}  \int_{t_1}^{t_2}\int_\Om A^\e_{i } (x, \nabla_\e u^\e)  \eta X_\ell \eta
Z u^\e 
\\
&+\sum_{\ell=1}^{2n} 
\sum_{i=1}^{2n+1}  \int_{t_1}^{t_2}\int_\Om A^\e_{i } (x, \nabla_\e u^\e)   2\eta Z\eta 
X^\e_\ell u^\e 
+\sum_{\ell=1}^{2n} 
\sum_{i,j=1}^{2n+1}  \int_{t_1}^{t_2}\int_\Om \eta^2 A^\e_{i \xi_j} (x, \nabla_\e u^\e) X_j^\e Z u^\e   
X^\e_\ell u^\e 
\\ 
& \leq \int_{t_1}^{t_2}\int_\Om (\ddelta + |\nabla_\e u^\e|^2)^{p-1} \eta |\nabla_\e \eta|
|Z u^\e| +
\sum_{\ell=1}^{2n} 
\sum_{i=1}^{2n+1}  \int_{t_1}^{t_2}\int_\Om (\ddelta + |\nabla_\e u^\e|^2)^{p}  |2\eta Z\eta |
\\
& +\sum_{\ell=1}^{2n} 
\sum_{i,j=1}^{2n+1}  \int_{t_1}^{t_2}\int_\Om \eta^2 (\ddelta + |\nabla_\e u^\e|^2)^{(p-1)/2} |\nabla_\e Z u^\e|   
 \\
 & \leq 
\alpha_1\int_{t_1}^{t_2}\int_\Om  (\ddelta + |\nabla_\e u^\e|^2)^{(p-2)/2} \eta^2
|Z u^\e|^2 +
C_{\alpha_1}\int_{t_1}^{t_2}\int_\Om (\ddelta + |\nabla_\e u^\e|^2)^{p/2} |\nabla_\e\eta|^2
\\
& +
\sum_{\ell=1}^{2n} 
\sum_{i=1}^{2n+1}  \int_{t_1}^{t_2}\int_\Om (\ddelta + |\nabla_\e u^\e|^2)^{p/2}  |2\eta Z\eta |
\\
& +\frac{\alpha_2}{||\nabla_\e\eta||^2}\sum_{\ell=1}^{2n} 
\sum_{i,j=1}^{2n+1}  \int_{t_1}^{t_2}\int_\Om \eta^4 (\ddelta + |\nabla_\e u^\e|^2)^{(p-2)/2} |\nabla_\e Z u^\e|^2   +
C_{\alpha_2}||\nabla_\e\eta||^2\int_{t_1}^{t_2}\int_{supp(\eta)} (\ddelta + |\nabla_\e u^\e|^2)^{p/2}.
\end{align*}
}
Now, we apply Lemma \ref{stimaZu} to find, for any $\alpha>0$,
\begin{align*}
& \frac{\alpha}{||\nabla_\e \eta||^2}\sum_{\ell=1}^{2n} 
\sum_{i,j=1}^{2n+1}  \int_{t_1}^{t_2}\int_\Om (\ddelta + |\nabla_\e u^\e|^2)^{(p-2)/2} |\nabla_\e Z u^\e|^2   \eta^4
\\
& \le \alpha C  
\int_{t_1} ^{t_2}\int_\Omega
(\ddelta+|\nabla_\e u^\e|^2)^{\frac{p-2}{2}} |Zu^\e|^{2}\eta^2
+ \frac{\alpha  ||\partial_t\eta||}{
||\nabla_\e\eta||^2} 
\int_{t_1}^{t_2}\int_\Omega |Zu^\e|^{2} \eta^3 
\\
&+ \alpha 
\int_{t_1} ^{t_2}\int_\Omega 
(\ddelta+|\nabla_\e u^\e|^2)^{\frac{p}{2}} \eta^4. 
\end{align*}
Analogously, 
\begin{align*}
& \sum_{\ell=1}^{2n} I^2_\ell + \sum_{\ell=1}^{2n} I^3_\ell
\le \al \sum_{i,j=1}^{2n+1}\int_{t_1}^{t_2}\int_\Om (\ddelta+|\nabla_\e u^\e|^2)^{(p-2)/2} 
 |X^\e_i X^\e_j u^\e|^2
 \eta^2
 \\
&  + C\int_{t_1}^{t_2}\int_\Om (\ddelta+|\nabla_\e u^\e|^2)^{p/2} |\nabla_\e \eta|^2.
\end{align*}
Using the structure conditions, one has
\begin{align*}
& \sum_{\ell=1}^{2n} I^4_\ell 
\le \sum_{i,j=1}^{2n+1}\int_{t_1}^{t_2}\int_\Om (\ddelta+|\nabla_\e u^\e|^2)^{(p-1)/2} 
 |\nabla_\e Zu^\e|
\eta^2
\\
& \le \al\sum_{i,j=1}^{2n+1}\int_{t_1}^{t_2}\int_\Om (\ddelta+|\nabla_\e u^\e|^2)^{(p-2)/2} 
 |\nabla_\e Z u^\e|^2
 \eta^2
 \\
 & + C_\alpha\int_{t_1}^{t_2}\int_\Om (\ddelta+|\nabla_\e u^\e|^2)^{p/2}\Big(\eta^2 + |\nabla_\e \eta|^2 + |\eta Z\eta|\Big),
 \end{align*}
thus concluding the proof.

\end{proof}

Next, we need to establish mixed type Caccioppoli inequalities, where the left-hand side includes terms with both horizontal derivatives and derivatives along the second layer of the stratified Lie algebra of $\Hn$.

\begin{lemma} \label{cacciopoXZ}Set $T >t_2 > t_1> 0$. Let $u^\e$ be a weak solution of \eqref{approx1} in $Q=\Om\times (0,T)$. Let 
$\beta\ge 2$ and let $\eta\in C^1((0,T), C^{\infty}_0(\Om))$,  with $0\le \eta \leq 1$. For all $\alpha \leq 1$ there exist  constants
$C_\Lambda$, $C_\alpha = C(\alpha, \lambda, \Lambda)>0$ such that  
{\allowdisplaybreaks
\begin{align}\label{longwaydown}
& \int_{t_1}^{t_2} \int_\Om \eta^{\beta+2}(\ddelta+|\nabla_\e u^\e|^2)^{\frac{p-2}{2}} |Zu^\e|^\beta \sum_{i,j=1}^{2n+1} |X^\e_iX^\e_j u^\e|^2 
\\
& + \int_\Om
 \eta^{\beta+2}|Zu^\e|^\beta |\nabla_\e u^\e|^2\bigg|_{t_1}^{t_2}
\notag\\
&  +  (\beta+ 1)^2\int_{t_1}^{t_2} \int_\Om (\ddelta+|\nabla_\e u^\e|^2)^{\frac{p-2}{2}} |\nabla_\e Zu^\e|^2   |Zu^\e|^{\beta-2} \eta^{\beta+2} 
|\nabla_\e u^\e|^2 
\notag\\
& \leq C_\al (\beta+1)^2(1+ |\nabla_\e \eta||_{L^\infty}^2)\int_{t_1}^{t_2} \int_\Om (\eta^{\beta} +  \eta^{\beta+4}) (\ddelta+|\nabla_\e u^\e|^2)^{\frac{p}{2}} |Zu^\e|^{\beta-2} \sum_{i,j=1}^{2n+1} |X^\e_iX^\e_j u^\e|^2
\notag\\
& + \frac{2\al}{(1 + ||\nabla_\e \eta||^2)(\beta + 2)}\int_{t_1}^{t_2}\int_\Omega |Zu^\e|^{\beta+2} 
\eta^{\beta+3} |\partial_t \eta| dx + \frac{\al}{(\beta + 2)^2}\int_\Omega |Zu^\e|^{\beta+2} \eta^{\beta+4}\Big|_{t=t_1}
\notag\\
&+ C_{\Lambda}(\beta+1)^2
\int_{t_1}^{t_2}  \int_\Om
(\ddelta+|\nabla_\e u^\e|^2)^{\frac{p+2}{2}} |Zu^\e|^{\beta-2}    \eta^{\beta+2} 
\notag\\
& +
\int_{t_1}^{t_2} \int_{\Om} |Zu^\e|^{\beta}  |\nabla_\e u^\e|^2  \p_t ( \eta^{\beta+2}).
\notag
\end{align}
}
\end{lemma}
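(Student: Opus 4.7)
The plan is to combine two weak-form identities derived respectively from the scalar equation for $Zu^\e$ (Lemma~\ref{eqZ}) and from the equations for the horizontal derivatives $v^\e_\ell = X^\e_\ell u^\e$ (Lemma~\ref{diff-PDE}), and then to couple them by Young's inequality. The two test functions are chosen so as to manufacture on the left-hand side precisely the two ``good'' quadratic quantities appearing in \eqref{longwaydown}: the term involving $\sum_{i,j}|X^\e_iX^\e_j u^\e|^2$ with weight $|Zu^\e|^\beta$, and the term involving $|\nabla_\e Zu^\e|^2$ with weight $|\nabla_\e u^\e|^2|Zu^\e|^{\beta-2}$.

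For the first identity I would test the equation of Lemma~\ref{eqZ} against $\phi_1 = \eta^{\beta+2}|\nabla_\e u^\e|^2|Zu^\e|^{\beta-2}Zu^\e$. Distributing each horizontal derivative across the product, the dominant contribution arises when $X^\e_i$ hits $|Zu^\e|^{\beta-2}Zu^\e$, producing a factor $(\beta-1)|Zu^\e|^{\beta-2}X^\e_i Zu^\e$; paired against $A^\e_{i,\xi_j}X^\e_j Zu^\e$ and bounded from below by \eqref{structure-epsilon}, this yields the $(\beta+1)^2$-term of \eqref{longwaydown} on the left. For the second identity I would test the $\ell$-th equation of \eqref{eqderivX} against $\phi_2 = \eta^{\beta+2}|Zu^\e|^\beta v^\e_\ell$ and sum over $\ell=1,\ldots,2n$, so that the principal ellipticity part produces $\lambda\int \eta^{\beta+2}(\ddelta+|\nabla_\e u^\e|^2)^{(p-2)/2}|Zu^\e|^\beta \sum_{i,\ell}|X^\e_iX^\e_\ell u^\e|^2$. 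Using the identity $\sum_{\ell=1}^{2n}v^\e_\ell\, \partial_t v^\e_\ell = \tfrac12 \partial_t(|\nabla_\e u^\e|^2 - \e^2|Zu^\e|^2)$ and integrating by parts once in time, I obtain on the left the boundary term $\int_\Omega \eta^{\beta+2}|Zu^\e|^\beta|\nabla_\e u^\e|^2\big|_{t_1}^{t_2}$ and on the right the last term $\int |Zu^\e|^\beta|\nabla_\e u^\e|^2\,\partial_t(\eta^{\beta+2})$ of \eqref{longwaydown}.

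Once the two identities are added, every remaining contribution is a cross term. The cross terms carrying an $X^\e_i Zu^\e$ factor are split by Young's inequality using the weights $\alpha(1+\|\nabla_\e\eta\|_{L^\infty}^2)^{-1}$ and $C_\alpha(1+\|\nabla_\e\eta\|_{L^\infty}^2)$, which is precisely what generates the prefactor $C_\alpha(\beta+1)^2(1+\|\nabla_\e\eta\|_{L^\infty}^2)$ in front of the first term on the right of \eqref{longwaydown}; the cross terms carrying a second horizontal derivative $X^\e_iX^\e_\ell u^\e$ are absorbed into the other good term in the analogous way. The lower-order pieces coming from the $A^\e_{i,x_\ell}$, $A^\e_{i,x_{2n+1}}$ and $s_\ell Z(A^\e_{\ell+s_\ell n})$ contributions in Lemma~\ref{diff-PDE} are bounded using $|A^\e_i|+|\partial_{x_j}A^\e_i|\le \Lambda(\ddelta+|\nabla_\e u^\e|^2)^{(p-1)/2}$ and yield the $C_\Lambda(\beta+1)^2 \int (\ddelta+|\nabla_\e u^\e|^2)^{(p+2)/2}|Zu^\e|^{\beta-2}\eta^{\beta+2}$ term on the right; the two residual $|Zu^\e|^{\beta+2}$ averages appearing on the right record what remains from the $\partial_t Zu^\e$ contribution of the first identity after one more integration by parts in time, and will later be controlled in the Moser iteration by Lemma~\ref{stimaZu}.

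The main technical obstacle is the bookkeeping of these cross terms: the Young splittings must be performed uniformly in $\e$ so as to leave both good quadratic terms on the left strictly positive, while reproducing exactly the $(\beta+1)^2$, $C_\alpha(1+\|\nabla_\e\eta\|_{L^\infty}^2)$, $\alpha/(1+\|\nabla_\e\eta\|_{L^\infty}^2)$ and $\alpha/(\beta+2)^2$ factors dictated by \eqref{longwaydown}. A secondary difficulty comes from the Heisenberg commutation relations: whenever $X^\e_i$ is distributed across $|\nabla_\e u^\e|^2$, the identity $X^\e_iX^\e_\ell u^\e = X^\e_\ell X^\e_i u^\e + [X^\e_i,X^\e_\ell]u^\e$ produces extra $Zu^\e$-contributions which must be reabsorbed through the same $\alpha$-parameter, without introducing spurious inverse powers of $\e$.
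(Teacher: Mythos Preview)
Your plan is essentially the paper's proof: the same two test functions $\phi=2\eta^{\beta+2}|Zu^\e|^\beta X^\e_\ell u^\e$ in \eqref{eqderivX} and $\phi=\beta\eta^{\beta+2}|\nabla_\e u^\e|^2|Zu^\e|^{\beta-2}Zu^\e$ in the equation of Lemma~\ref{eqZ}, chosen so that the two time-derivative contributions recombine into $\partial_t\big(\eta^{\beta+2}|Zu^\e|^\beta|\nabla_\e u^\e|^2\big)$, followed by Young splittings of the cross terms.

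One correction to your bookkeeping: the two $|Zu^\e|^{\beta+2}$ terms on the right of \eqref{longwaydown} do \emph{not} come from the $\partial_t Zu^\e$ contribution. After the Young splittings of the cross terms $I^1_\ell,I^2_\ell,I^3_\ell$ you are left with pieces of the form
\[
\frac{\alpha}{1+\|\nabla_\e\eta\|^2}\int \eta^{\beta+4}(\ddelta+|\nabla_\e u^\e|^2)^{\frac{p-2}{2}}|Zu^\e|^\beta|\nabla_\e Zu^\e|^2,
\]
which carry \emph{no} factor $|\nabla_\e u^\e|^2$ and therefore cannot be absorbed into the good $|\nabla_\e Zu^\e|^2$ term on the left. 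The paper disposes of these by invoking Lemma~\ref{stimaZu} \emph{inside} the present proof (see \eqref{estimateXZu}), and it is that application which produces the $\eta^{\beta+3}|\partial_t\eta|$ and $\eta^{\beta+4}\big|_{t=t_1}$ terms you see on the right-hand side. So Lemma~\ref{stimaZu} is not deferred to a later Moser step; it is the mechanism that closes this estimate.
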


\begin{proof}
Let $\eta \in C^\infty_0 (\Omega\times (0,T))$
 be a nonnegative cutoff function. Fix $\beta\geq 2 $ and $\ell \in
\{1, . . . , 2n\}.$ Note that
$$\partial_t(|X^\e_\ell u^\e|^2 |Zu^\e|^\beta)= 2  X^\e_\ell u^\e \partial_ tX^\e_\ell u^\e|Zu^\e|^\beta +\beta|X^\e_\ell u^\e|^2  |Zu^\e|^{\beta-2} Zu^\e \partial_t Zu^\e,$$
which suggests to use
$ 2  X^\e_\ell u^\e |Zu^\e|^\beta$ as a test function in the equation \eqref{eqderivX} satisfied by $X^\e_\ell u^\e $ and to choose
$\beta|X^\e_\ell u^\e|^2  |Zu^\e|^{\beta-2} Zu^\e $ as a test function in the equation \eqref{eqZ} satisfied by $Zu^\e$. 
Equation \eqref{eqderivX} becomes in weak form
\begin{align*} 
& \int_{t_1}^{t_2} \int_\Om \p_t X^\e_\ell u^\e  \phi  = 
 - \sum_{i,j=1}^{2n+1}  \int_{t_1}^{t_2} \int_\Om 
\Big( A_{i, \xi_j}^\e (x, \nabla_\e u^\e) X^\e_\ell X^\e_ju^\e \Big)
X^\e_i \phi + s_\ell Z (A_{\ell+s_{\ell}n}^\e (x, \nabla_\e u^\e)) \phi
\\
&
-  \sum_{i=1}^{2n+1}\int_{t_1}^{t_2} \int_\Om 
\Big( A_{i, x_\ell}^\e (x, \nabla_\e u^\e) - 
\frac{s_\ell x_{\ell+s_{\ell}n}}{2} A_{i, x_{2n+1}}^\e (x, \nabla_\e u^\e) \Big)
X^\e_i \phi. 
\end{align*}
Consequently,
if we substitute the test function $\phi=2 \eta^{\beta+2}|Zu|^\beta X^\e_\ell u$, we obtain
{\allowdisplaybreaks
\begin{align}\label{tXestimate} 
& 2\int_{t_1}^{t_2} \int_\Om \p_t X^\e_\ell u^\e   \eta^{\beta+2}|Zu^\e|^\beta X^\e_\ell u^\e
\\
& + 
2  \sum_{i,j=1}^{2n+1} \int_{t_1}^{t_2} \int_\Om  A_{i, \xi_j}^\e (x, \nabla_\e u^\e) X^\e_\ell X^\e_ju^\e  
\eta^{\beta+2}|Zu^\e|^\beta X^\e_\ell  X^\e_i u^\e 
\notag\\
& = - \sum_{i,j=1}^{2n+1}2\int_{t_1}^{t_2} \int_\Om A_{i, \xi_j}^\e (x, \nabla_\e u^\e) X^\e_\ell X^\e_ju^\e   X^\e_i \bigg(\eta^{\beta+2}|Zu^\e|^\beta\bigg) X^\e_\ell u^\e 
\notag\\
& -  2 \sum_{i,j=1}^{2n+1}\int_{t_1}^{t_2} \int_\Om  A_{i, \xi_j}^\e (x, \nabla_\e u^\e) X^\e_\ell X^\e_ju^\e  \eta^{\beta+2}|Zu^\e|^\beta[X^\e_i, X^\e_\ell] u^\e 
\notag\\
& - 2 s_l\int_{t_1}^{t_2} \int_\Om Z(A_{\ell+s_{\ell}n}(x, \nabla_\e u^\e) )\eta^{\beta+2}|Zu^\e|^\beta X^\e_\ell u^\e  
\notag\\
& -2\sum_{i=1}^{2n+1}\int_{t_1}^{t_2} \int_\Om 
\Big( A_{i, x_\ell}^\e (x, \nabla_\e u^\e) - 
\frac{s_\ell x_{\ell+s_{\ell}n}}{2} A_{i, x_{2n+1}}^\e (x, \nabla_\e u^\e) \Big)
X^\e_i \Big(
\eta^{\beta+2}|Zu^\e|^\beta X^\e_\ell u^\e\Big)
\notag\\
& = I_\ell^1 + I_\ell^2  + I_\ell^3 + I_\ell^4.
\notag
\end{align}}
We will show that these terms satisfy the following estimate
{\allowdisplaybreaks
\begin{align}\label{stimaI1}
& \sum_{k=1}^4\sum_{\ell=1}^{2n} |I_\ell^k|\leq 
\al\int_{t_1}^{t_2} \int_\Om \eta^{\beta+2}(\ddelta+|\nabla_\e u^\e|^2)^{\frac{p-2}{2}} |Zu^\e|^\beta \sum_{i,j=1}^{2n+1} |X^\e_iX^\e_j u^\e|^2  
\\
& + C_\al (\beta+1)^2
(1 + ||\nabla_\e \eta||^2_{L^\infty}
\int_{t_1}^{t_2} \int_\Om (\eta^{\beta} + \eta^{\beta+4}) (\ddelta+|\nabla_\e u^\e|^2)^{\frac{p}{2}} |Zu^\e|^{\beta-2} \sum_{i,j=1}^{2n+1} |X^\e_iX^\e_j u^\e|^2  
\notag\\
& + \frac{2\al}{(1 + ||\nabla_\e \eta||^2)(\beta + 2)}\int_{t_1}^{t_2}\int_\Omega |Zu^\e|^{\beta+2} 
\eta^{\beta+3} |\partial_t \eta|  + \frac{\al}{(\beta + 2)^2}\int_\Omega |Zu^\e|^{\beta+2} \eta^{\beta+4}  \Big|_{t=t_1}
\notag\\
& +\al (\beta+1)^2\int_{t_1}^{t_2} \int_\Om  (\ddelta+|\nabla_\e u^\e|^2)^{\frac{p-2}{2}} 
\eta^{\beta+4} |Zu^\e|^{\beta-2}|\nabla_\e Zu^\e|^2  |\nabla_\e u^\e|^2.
\notag
\end{align}} 
We first note that 
{\allowdisplaybreaks
\begin{align*}
& \sum_{\ell=1}^{2n}|I_\ell^1| \leq 2 \sum_{\ell=1}^{2n}\sum_{i,j=1}^{2n+1} \int_{t_1}^{t_2} \int_\Om  |A_{i, \xi_j}^\e (x, \nabla_\e u^\e) X^\e_\ell X^\e_ju^\e  
X_i \Big(\eta^{\beta+2}|Zu^\e|^\beta   \Big) X^\e_\ell u^\e| 
\\
& \leq 2n \Lambda(\beta+2) \sum_{\ell=1}^{2n}\sum_{j=1}^{2n+1}
\int_{t_1}^{t_2} \int_\Om  (\ddelta+|\nabla_\e u^\e|^2)^{\frac{p-1}{2}}  |X^\e_\ell X^\e_ju^\e | 
\eta^{\beta+1} |\nabla_\e \eta|  |\nabla_\e u^\e| |Zu^\e|^\beta
\\
& +
2n \beta  \sum_{\ell=1}^{2n}\sum_{j=1}^{2n+1}\int_{t_1}^{t_2} \int_\Om  (\ddelta+|\nabla_\e u^\e|^2)^{\frac{p-1}{2}}  |X^\e_\ell X^\e_ju^\e | 
\eta^{\beta+2} |Zu^\e|^{\beta-1} |\nabla_\e Zu^\e|  
\\
& \leq \al\int_{t_1}^{t_2} \int_\Om \eta^{\beta+2}(\ddelta+|\nabla_\e u^\e|^2)^{\frac{p-2}{2}} |Zu^\e|^\beta \sum_{i,j=1}^{2n+1} |X^\e_iX^\e_j u^\e|^2 
\\
& + C_\al (\beta+1)^2\int_{t_1}^{t_2} \int_\Om \eta^{\beta}|\nabla_\e \eta|^2 (\ddelta+|\nabla_\e u^\e|^2)^{\frac{p}{2}} |Zu^\e|^\beta  
\\
& + C_\al (\beta+1)^2(1 + ||\nabla_\e \eta||^2)\int_{t_1}^{t_2} \int_\Om \eta^{\beta} (\ddelta+|\nabla_\e u^\e|^2)^{\frac{p}{2}} |Zu^\e|^{\beta-2} \sum_{i,j=1}^{2n+1} |X^\e_iX^\e_j u^\e|^2 
\\
& +\frac{\al}{1 + ||\nabla_\e \eta||^2}\int_{t_1}^{t_2} \int_\Om \eta^{\beta+4} (\ddelta+|\nabla_\e u^\e|^2)^{\frac{p-2}{2}}
 |Zu^\e|^{\beta} |\nabla_\e Z u^\e|^2.
 \end{align*}}
The last term can be estimated, as follows, using Lemma \ref{stimaZu}:
{\allowdisplaybreaks
\begin{align}\label{estimateXZu} 
& \al\int_{t_1}^{t_2} \int_\Om \eta^{\beta+4} (\ddelta+|\nabla_\e u^\e|^2)^{\frac{p-2}{2}}
 |Zu^\e|^{\beta} |\nabla_\e Z u^\e|^2
 \\
 & \le \al C_{\Lambda, \lambda}
\int_{t_1} ^{t_2}\int_\Omega
(\ddelta+|\nabla_\e u^\e|^2)^{\frac{p-2}{2}}|\nabla_\e\eta|^2 \eta^{\beta+2}|Zu^\e|^{\beta+2}
\notag\\
& + \frac{2\al } {\beta + 2}\int_{t_1}^{t_2}\int_\Omega |Zu^\e|^{\beta+2} \eta^{\beta+3} \partial_t \eta
\notag\\
& + \frac{\al}{(\beta + 1)^2}\int_\Omega |Zu^\e|^{\beta+2} \eta^{\beta+4}  \Big|_{t=t_1}
+ \al C_{\Lambda, \lambda}\int_{t_1} ^{t_2}\int_\Omega 
(\ddelta+|\nabla_\e u^\e|^2)^{\frac{p}{2}} \eta^{\beta +4}  |Zu^\e|^{\beta} \notag\\
& \le\al C_{\Lambda, \lambda}
\int_{t_1} ^{t_2}\int_\Omega
(\ddelta+|\nabla_\e u^\e|^2)^{\frac{p-2}{2}}|\nabla_\e\eta|^2 \eta^{\beta+2}|Zu^\e|^{\beta}
\sum_{ij}|X_iX_j u|^2
\notag\\
& + \frac{2\al } {\beta + 2}\int_{t_1}^{t_2}\int_\Omega |Zu^\e|^{\beta+2} \eta^{\beta+3} \partial_t \eta  
 + \frac{\al}{(\beta + 1)^2}\int_\Omega |Zu^\e|^{\beta+2} \eta^{\beta+4}  \Big|_{t=t_1}
\notag\\
& +
\al C_{\Lambda, \lambda}\int_{t_1} ^{t_2}\int_\Omega 
(\ddelta+|\nabla_\e u^\e|^2)^{\frac{p}{2}} \eta^{\beta +4}  |Zu^\e|^{\beta}.
\notag
\end{align}
}
From here estimate \eqref{stimaI1} holds. Integrating by parts we have  
{\allowdisplaybreaks
\begin{align*}
& \sum_{\ell=1}^{2n}|I_\ell^2| = - 2\sum_{\ell=1}^{2n}\sum_{i,j=1}^{2n+1} \int_{t_1}^{t_2} \int_\Om  A_{i}^\e (x, \nabla_\e u^\e) X^\e_l\Big( \eta^{\beta+2}|Zu^\e|^\beta[X^\e_i, X^\e_\ell] u^\e\Big) 
\\
& \leq  2(\beta+2)\int_{t_1}^{t_2} \int_\Om (\ddelta+|\nabla_\e u^\e|^2)^{\frac{p-1}{2}} \eta^{\beta+1} |\nabla_\e\eta| |Zu^\e|^{\beta+1}  +
\\
&+  2(\beta+1)\int_{t_1}^{t_2} \int_\Om  (\ddelta+|\nabla_\e u^\e|^2)^{\frac{p-1}{2}} \eta^{\beta+2}  |Zu^\e|^{\beta} |\nabla_\e Zu^\e| 
\\
&\leq  C_\alpha(\beta+1)^2\int_{t_1}^{t_2} \int_\Om (\ddelta+|\nabla_\e u^\e|^2)^{\frac{p}{2}} \eta^{\beta}  |Zu^\e|^{\beta-2} \sum_{i,j=1}^{2n+1} |X^\e_iX^\e_j u^\e|^2 
\\
& + \alpha\int_{t_1}^{t_2} \int_\Om (\ddelta+|\nabla_\e u^\e|^2)^{\frac{p-2}{2}} \eta^{\beta+2}  |Zu^\e|^{\beta } \sum_{i,j=1}^{2n+1} |X^\e_iX^\e_j u^\e|^2 
\\
& +  \al\int_{t_1}^{t_2} \int_\Om  (\ddelta+|\nabla_\e u^\e|^2)^{\frac{p-2}{2}} \eta^{\beta+4}  |Zu^\e|^{\beta} |\nabla_\e Zu^\e|^2  
\\
&+  C_\al(\beta+1)^2\int_{t_1}^{t_2} \int_\Om  (\ddelta+|\nabla_\e u^\e|^2)^{\frac{p}{2}} \eta^{\beta}  |Zu^\e|^{\beta-2} \sum_{i,j=1}^{2n+1} |X^\e_iX^\e_j u^\e|^2.
\end{align*}}
From here, using inequality \eqref{estimateXZu}, we deduce that $I^2_\ell$ satisfies inequality 
\eqref{stimaI1}. 
The estimate of $I_\ell^3$ can be made as follows:
{\allowdisplaybreaks
\begin{align*}
& |I_\ell^3|\leq 
 \al\int_{t_1}^{t_2} \int_\Om  (\ddelta+|\nabla_\e u^\e|^2)^{\frac{p-2}{2}} \eta^{\beta+4}  |Zu^\e|^{\beta} |\nabla_\e Zu^\e|^2 
\\
&+  C_\al\int_{t_1}^{t_2} \int_\Om  (\ddelta+|\nabla_\e u^\e|^2)^{\frac{p}{2}} \eta^{\beta}  |Zu^\e|^{\beta-2} \sum_{i,j=1}^{2n+1} |X^\e_iX^\e_j u^\e|^2
\\
& + \Lambda \int_{t_1}^{t_2} \int_\Om  (\ddelta+|\nabla_\e u^\e|^2)^{\frac{p}{2}} \eta^{\beta+2}  |Zu^\e|^{\beta}. 
\end{align*}}
From here and \eqref{estimateXZu} the inequality \eqref{stimaI1} follows. The estimate of $I^4_\ell$ is analogous:
{\allowdisplaybreaks
\begin{align*}
& |I_\ell^4|\leq 2(\beta+1)\Lambda \int_{t_1}^{t_2} \int_\Om  (\ddelta+|\nabla_\e u^\e|^2)^{\frac{p-1}{2}} 
\eta^{\beta+1} |\nabla_\e\eta| |Zu^\e|^{\beta}|\nabla_\e u^\e|  
\\
& + 2(\beta+1)\Lambda \int_{t_1}^{t_2} \int_\Om  (\ddelta+|\nabla_\e u^\e|^2)^{\frac{p-1}{2}} 
\eta^{\beta+2} |Zu^\e|^{\beta-1}  |\nabla_\e Zu^\e| |\nabla_\e u^\e| 
\\
& +  \Lambda \sum_{i,j=1}^{2n+1}\int_{t_1}^{t_2} \int_\Om  (\ddelta+|\nabla_\e u^\e|^2)^{\frac{p-1}{2}} 
\eta^{\beta+2} |Zu^\e|^{\beta}   |X^\e_i X^\e_ju^\e| 
\\
& \leq \al (\beta+1)^2\int_{t_1}^{t_2} \int_\Om  (\ddelta+|\nabla_\e u^\e|^2)^{\frac{p-2}{2}} 
\eta^{\beta+4} |Zu^\e|^{\beta-2}|\nabla_\e Zu^\e|^2  |\nabla_\e u^\e|^2 
\\
& +\al \sum_{i,j=1}^{2n+1}\int_{t_1}^{t_2} \int_\Om  (\ddelta+|\nabla_\e u^\e|^2)^{\frac{p-2}{2}} 
\eta^{\beta+2} |Zu^\e|^{\beta} |X^\e_i X^\e_ju^\e|^2  
\\
& + C_\al(\beta+1)(1 + ||\nabla_\e\eta||_{L^\infty}^2) \int_{t_1}^{t_2} \int_\Om  (\ddelta+|\nabla_\e u^\e|^2)^{\frac{p}{2}} 
 |Zu^\e|^{\beta} (\eta^{\beta} + \eta^{\beta+2} ).
\end{align*}}
We now recall the following pde from Lemma \ref{eqZ}   
$$\partial_t Zu^\e=
\sum_{i,j=1}^{2n+1}
X^\e_i(A^\e_{i, \xi_j}(x, \nabla_\e u^\e)X^\e_j Zu^\e) +
\sum_{i=1}^{2n+1}
X^\e_i(A^\e_{i, x_{2n+1}}(x, \nabla_\e u^\e)). 
$$
Substituting in this equation the test function $\phi=\beta |Zu|^{\beta-2}Zu \eta^{\beta+2} |\nabla_\e u^\e|^2$, one obtains
{\allowdisplaybreaks
\begin{align}\label{tZestimate}
& \beta\int_{t_1}^{t_2} \int_\Om \p_tZu^\e  |Zu^\e|^{\beta-2}Zu^\e \eta^{\beta+2} |\nabla_\e u^\e|^2 
\\
& + 
\beta(\beta-1) \sum_{i,j=1}^{2n+1} \int_{t_1}^{t_2} \int_\Om A_{i\xi_j}(x,\nabla_\e u^\e) X^\e_j Zu^\e X^\e_i Zu^\e  |Zu^\e|^{\beta-2} \eta^{\beta+2}|\nabla_\e u^\e|^2
\notag\\
& =- \beta  \sum_{i,j=1}^{2n+1}\int_{t_1}^{t_2} \int_\Om A_{i\xi_j}(x, \nabla_\e u^\e) X^\e_j Zu^\e  |Zu^\e|^{\beta-2}Zu^\e 
X^\e_i \bigg(  \eta^{\beta+2} |\nabla_\e u^\e|^2\bigg) 
\notag\\
& - \beta
\sum_{i=1}^{2n+1}\int_{t_1}^{t_2} \int_\Om
A^\e_{i, x_{2n+1}}(x, \nabla_\e u^\e) X^\e_i \bigg(|Zu^\e|^{\beta-2}Zu^\e    \eta^{\beta+2} |\nabla_\e u^\e|^2\bigg)
\notag
\\
& =-\beta (\beta+2) \sum_{i,j=1}^{2n+1}\int_{t_1}^{t_2} \int_\Om A_{i\xi_j}(x, \nabla_\e u^\e) X^\e_j Zu^\e  |Zu^\e|^{\beta-2}Zu^\e 
X^\e_i\eta \eta^{\beta+1} |\nabla_\e u^\e|^2 
\notag\\
& -2\beta \sum_{\ell,i,j=1}^{2n+1} \int_{t_1}^{t_2} \int_\Om A_{i\xi_j}(x, \nabla_\e u^\e) X^\e_j Zu^\e  |Zu^\e|^{\beta-2}Zu^\e \eta^{\beta+2} 
X^\e_\ell u^\e X^\e_i X^\e_\ell u^\e
\notag\\
& - \beta(\beta-1)
\sum_{i=1}^{2n+1}\int_{t_1}^{t_2} \int_\Om
A^\e_{i, x_{2n+1}}(x, \nabla_\e u^\e) |Zu^\e|^{\beta-2} X^\e_iZu^\e    \eta^{\beta+2} |\nabla_\e u^\e|^2
\notag\\
& - \beta(\beta+1)
\sum_{i=1}^{2n+1}\int_{t_1}^{t_2} \int_\Om
A^\e_{i, x_{2n+1}}(x, \nabla_\e u^\e) |Zu^\e|^{\beta-2} Zu^\e    \eta^{\beta+1}X^\e_i\eta |\nabla_\e u^\e|^2 
\notag\\
& - \beta
\sum_{i,\ell=1}^{2n+1}\int_{t_1}^{t_2} \int_\Om
A^\e_{i, x_{2n+1}}(x, \nabla_\e u^\e) |Zu^\e|^{\beta-2} Zu^\e    \eta^{\beta+2}X^\e_\ell u^\e X^\e_i X^\e_\ell u^\e 
\notag\\
& =I^5 + \cdots + I^9. 
\notag
\end{align}}
We observe that the ellipticity condition yields
\begin{align*}
& \beta(\beta-1)\sum_{i,j=1}^{2n+1}\int_{t_1}^{t_2} \int_\Om A_{i\xi_j}(\nabla_\e u^\e) X^\e_j Zu X^\e_i Zu^\e     |Zu^\e|^{\beta-2} \eta^{\beta+2} |\nabla_\e u^\e|^2 
\\
& \ge
(\beta+1)^2 C_{\lambda} \int_{t_1}^{t_2} \int_\Om (\ddelta+|\nabla_\e u^\e|^2)^{\frac{p-2}{2}} |\nabla_\eps Zu^\e|^2   |Zu^\e|^{\beta-2} \eta^{\beta+2} |\nabla_\e u^\e|^2.
\end{align*}
Let us now consider $I^5$:
{\allowdisplaybreaks
\begin{align*}
& I^5 = -\beta(\beta+2)\sum_{i,j=1}^{2n+1}\int_{t_1}^{t_2} \int_\Om A_{i\xi_j}(x, \nabla_\e u^\e) X^\e_j Zu^\e  |Zu^\e|^{\beta-2}Zu^\e X^\e_i\eta \eta^{\beta+1} |\nabla_\e u^\e|^2 
\\
& \leq 2(\beta+1)^2\int_{t_1}^{t_2} \int_\Om (\ddelta+|\nabla_\e u^\e|^2)^{\frac{p-1}{2}} 
|\nabla_\e Zu^\e|  |Zu^\e|^{\beta-1}|\nabla_\e\eta| \eta^{\beta+1} |\nabla_\e u^\e|^2
\\
& \leq \alpha(\beta+1)^2\int_{t_1}^{t_2} \int_\Om (\ddelta+|\nabla_\e u^\e|^2)^{\frac{p-2}{2}} |\nabla_\e Zu^\e|^2  |Zu^\e|^{\beta-2} \eta^{\beta+2} |\nabla_\e u^\e|^2 
\\
& + C_\al (\beta+1)^2\int_{t_1}^{t_2} \int_\Om (\ddelta+|\nabla_\e u^\e|^2)^{\frac{p}{2}}  |Zu^\e|^{\beta}|\nabla_\e\eta| \eta^{\beta}.
\end{align*}}
The estimate of $I^6$ is identical to that $I^1_\ell$ and we thus omit it.  Let us consider $I^7$. One has
{\allowdisplaybreaks
\begin{align*}
& I^7 \leq (\beta+1)^2
\int_{t_1}^{t_2} \int_\Om (\ddelta+|\nabla_\e u^\e|^2)^{\frac{p-1}{2}}|Zu^\e|^{\beta-2} |\nabla_\e Zu^\e|    \eta^{\beta+2} |\nabla_\e u^\e|^2
\\
& \leq \alpha(\beta+1)^2
\int_{t_1}^{t_2} \int_\Om (\ddelta+|\nabla_\e u^\e|^2)^{\frac{p-2}{2}}|Zu^\e|^{\beta-2} |\nabla_\e Zu^\e|^2    \eta^{\beta+2} |\nabla_\e u^\e|^2
\\
& + C_\al(\beta+1)^2
\int_{t_1}^{t_2} \int_\Om (\ddelta+|\nabla_\e u^\e|^2)^{\frac{p}{2}}|Zu^\e|^{\beta-2}   \eta^{\beta+2} |\nabla_\e u^\e|^2. 
\end{align*}}
Similar consideration holds for $I^8$
{\allowdisplaybreaks
\begin{align*}
& I^8 \leq (\beta+1)^2
\int_{t_1}^{t_2}  \int_\Om
(\ddelta+|\nabla_\e u^\e|^2)^{\frac{p-1}{2}} |Zu^\e|^{\beta-1}    \eta^{\beta+1} |\nabla_\e\eta| |\nabla_\e u^\e|^2
\\
& \leq C_{\Lambda}(\beta+1)^2
\int_{t_1}^{t_2}  \int_\Om
(\ddelta+|\nabla_\e u^\e|^2)^{\frac{p}{2}} |Zu^\e|^{\beta}    \eta^{\beta}|\nabla_\e\eta|^2 
\\
& + C_{\Lambda}(\beta+1)^2
\int_{t_1}^{t_2}  \int_\Om
(\ddelta+|\nabla_\e u^\e|^2)^{\frac{p+2}{2}} |Zu^\e|^{\beta-2}    \eta^{\beta+2}.
\end{align*}}
Finally, we estimate $I^9$.
{\allowdisplaybreaks
\begin{align*}
& I^9 \leq C_{\Lambda}(\beta+1)\sum_{\ell,i=1}^{2n+1}
\int_{t_1}^{t_2} \int_\Om
(\ddelta+|\nabla_\e u^\e|^2)^{\frac{p-1}{2}} |Zu^\e|^{\beta-1}    \eta^{\beta+2} |\nabla_\e u^\e| |X^\e_i X^\e_\ell u^\e| 
\\
& \leq C_{\Lambda}(\beta+1)\sum_{\ell,i=1}^{2n+1}
\int_{t_1}^{t_2} \int_\Om
(\ddelta+|\nabla_\e u^\e|^2)^{\frac{p}{2}} |Zu^\e|^{\beta-2}    \eta^{\beta+2}  |X^\e_i X^\e_\ell u^\e|^2.
\end{align*}} 
It follows that 
{\allowdisplaybreaks
\begin{align}\label{3.9}
& \sum_{k=1}^4\sum_{\ell=1}^{2n} I_\ell^k + \sum_{k=5}^9 I^k
\leq
\al\int_{t_1}^{t_2} \int_\Om \eta^{\beta+2} (\ddelta+|\nabla_\e u^\e|^2)^{\frac{p-2}{2}} |Zu^\e|^\beta \sum_{i,j=1}^{2n+1} |X^\e_iX^\e_j u^\e|^2  
\\
& + C_\al (\beta+1)^2(1+ |\nabla_\e \eta||_{L^\infty}^2)\int_{t_1}^{t_2} \int_\Om (\eta^{\beta} +  \eta^{\beta+4}) (\ddelta+|\nabla_\e u^\e|^2)^{\frac{p}{2}} |Zu^\e|^{\beta-2} \sum_{i,j=1}^{2n+1} |X^\e_iX^\e_j u^\e|^2
\notag\\
& + \frac{2\al}{(1 + ||\nabla_\e \eta||^2)(\beta + 2)}\int_{t_1}^{t_2}\int_\Omega |Zu^\e|^{\beta+2} 
\eta^{\beta+3} |\partial_t \eta| dx + \frac{\al}{(\beta + 2)^2}\int_\Omega |Zu^\e|^{\beta+2} \eta^{\beta+4}\Big|_{t=t_1}
\notag\\
& +\alpha(\beta+1)^2\int_{t_1}^{t_2} \int_\Om (\ddelta+|\nabla_\e u^\e|^2)^{\frac{p-2}{2}} |\nabla_\e Zu^\e|^2  |Zu^\e|^{\beta-2} 
(\eta^{\beta+2}+\eta^{\beta+4}) |\nabla_\e u^\e|^2
\notag\\
& + C_{\Lambda}(\beta+1)^2
\int_{t_1}^{t_2}  \int_\Om
(\ddelta+|\nabla_\e u^\e|^2)^{\frac{p+2}{2}} |Zu^\e|^{\beta-2}    \eta^{\beta+2}.
\notag
\end{align}}
Summing up equations (\ref{tXestimate}) and (\ref{tZestimate}),  we obtain
{\allowdisplaybreaks
\begin{align*}
& \sum_{i,j=1}^{2n+1}
\int_{t_1}^{t_2} \int_\Om \eta^{\beta+2}(\ddelta+|\nabla_\e u^\e|^2)^{\frac{p-2}{2}} |Zu^\e|^\beta|X^\e_iX^\e_j u^\e|^2
 + \int_\Om
( \eta^{\beta+2}|Zu^\e|^\beta |\nabla_\e u^\e|^2 )\bigg|_{t_1}^{t_2} 
\\
& + \int_{t_1}^{t_2} \int_\Om (\ddelta+|\nabla_\e u^\e|^2)^{\frac{p-2}{2}} |\nabla_\e Zu^\e|^2   |Zu^\e|^{\beta-2} \eta^{\beta+2}|\nabla_\e u^\e|^2  
\\ 
& = \int_{t_1}^{t_2} \int_{\Om} |Zu^\e|^{\beta}  |\nabla_\e u^\e|^2  \p_t ( \eta^{\beta+2}) + \sum_{k=1}^4\sum_{\ell=1}^{2n}I_\ell^k 
+\sum_{k=5}^9 I^k.
\end{align*}}
Applying \eqref{3.9}, the proof is completed.

\end{proof}

At this point we make use of the non-degeneracy condition $\delta>0$, and recalling that $Z$ is obtained as a commutator of the horizontal vector fields and that $\eta\le 1$, we estimate
\begin{equation}\label{need delta>0 }
\int_{t_1}^{t_2}\int_\Omega |Zu^\e|^{2} \eta^3  dx dt \le  C_\delta\int_{t_1}^{t_2}\int_\Om
(\ddelta+|\nabla_\e u^\e|^2)^{\frac{p-2 }2} \sum_{i,j=1}^{2n+1} |X^\e_i X_j^\e u^\e|^2 \eta^2 dx dt.
\end{equation}
Lemma \ref{lemma3.4} and \eqref{need delta>0 } yield the following

\begin{cor} \label{lemma3.4bis} Let $u^\e$ be a weak solution of \eqref{approx1} in $Q$. For any $t_2\ge t_1\ge 0$,   and all $\eta \in C^\infty_0(\Omega)$, such that $\eta \le1$, 
$ ||\partial_t\eta||\leq C ||\nabla_\e\eta||^2$. For every fixed value of $\ddelta$ 
there exists $C_\ddelta$ depending on $\delta, p,n$ and on the structure constants, such that 
\begin{align*}
& \frac{1}{2}\int_\Omega ( (\ddelta+ |\nabla_\e u^\e|^2)\eta^2)\Big|_{t_1}^{t_2} + 
\lambda\int_{t_1}^{t_2}\int_\Om
(\ddelta+|\nabla_\e u^\e|^2)^{\frac{p-2 }2} \sum_{i,j=1}^{2n+1} |X^\e_i X_j^\e u^\e|^2 \eta^2
\\
& \leq 
 C_\ddelta\int_{t_1}^{t_2}\int_\Om (\ddelta+|\nabla_\e u^\e|^2)^{p/2}\Big(\eta^2 + |\nabla_\e \eta|^2 + |\eta Z\eta|\Big).
 \end{align*}
\end{cor}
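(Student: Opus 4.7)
The proof is short: the strategy is to combine the second version of Lemma \ref{lemma3.4} (the $\beta=0$ Caccioppoli estimate) with the pointwise inequality \eqref{need delta>0 }, and then absorb one term into the other by choosing a small parameter.

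First, I apply Lemma \ref{lemma3.4} to the solution $u^\e$ with the given cutoff $\eta$ (which satisfies $\eta\le 1$ and $\|\partial_t\eta\|\le C\|\nabla_\e\eta\|^2$). This yields, for every $\alpha>0$,
\begin{align*}
& \tfrac12\int_\Omega\bigl((\ddelta+|\nabla_\e u^\e|^2)\eta^2\bigr)\Big|_{t_1}^{t_2}
+ \lambda\int_{t_1}^{t_2}\!\!\int_\Omega (\ddelta+|\nabla_\e u^\e|^2)^{\frac{p-2}{2}}\sum_{i,j=1}^{2n+1}|X_i^\e X_j^\e u^\e|^2\,\eta^2 \\
&\qquad \le \alpha\int_{t_1}^{t_2}\!\!\int_\Omega |Zu^\e|^{2}\eta^{3}
+ C_\alpha\int_{t_1}^{t_2}\!\!\int_\Omega(\ddelta+|\nabla_\e u^\e|^2)^{p/2}\bigl(\eta^2+|\nabla_\e\eta|^2+|\eta Z\eta|\bigr).
\end{align*}

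Next I dispose of the $\alpha$-term by invoking \eqref{need delta>0 }. The key observation here is that $Z=[X_i,X_{n+i}]$ for any $i=1,\dots,n$, so $|Zu^\e|\le C\sum|X_j^\e X_k^\e u^\e|$ pointwise, and the non-degeneracy assumption $\ddelta>0$ lets us insert the weight $(\ddelta+|\nabla_\e u^\e|^2)^{(p-2)/2}$ at the cost of a constant $\ddelta^{-(p-2)/2}$. Since $p\ge 2$ and $\eta\le 1$, so that $\eta^3\le\eta^2$, this is exactly the content of \eqref{need delta>0 }.

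Substituting this bound in gives an $\alpha C_\ddelta$ times the same quadratic second-derivative term that appears on the left-hand side. Choosing $\alpha=\lambda/(2C_\ddelta)$ allows this term to be absorbed into the $\lambda$-term on the left, at the cost of halving its coefficient. Renaming $C_\alpha$ as $C_\ddelta$ yields the desired inequality and concludes the proof. The only mildly delicate point is checking that $C_\ddelta$ blows up as $\ddelta\to 0^+$, which is consistent with the remark made by the authors that the hypothesis $\ddelta>0$ is essential precisely at this step.
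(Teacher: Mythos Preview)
Your argument is correct and follows exactly the route the paper intends: the corollary is stated immediately after \eqref{need delta>0 } with the remark that ``Lemma \ref{lemma3.4} and \eqref{need delta>0 } yield the following,'' and you have simply written out that combination in full, including the absorption step via a suitable choice of $\alpha$. There is nothing to add.
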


\begin{cor}\label{nablagrande} Let $u^\e$ be a  solution of \eqref{approx1} in $\Om\times (0,T)$ and 
$B_{\e}(x_0,r) \times (t_0-r^2, t_0)$ a parabolic cylinder. 
Let  $\eta\in C^{\infty}( B_{\e}(x_0,r) \times (t_0-r^2, t_0))$
be a non-negative test function $\eta\leq 1,$ which vanishes on the parabolic boundary
and such that  there exists a constant $C_{\lambda, \Lambda}>1$ for which
$||\partial_t \eta||_{L^\infty}\leq C_{\lambda, \Lambda} (1 + ||\nabla_\e\eta||^2_{L^\infty}).$
 Set $t_1 = t_0-r^2 $.  
There exists a constant $C_{\ddelta, \lambda, \Lambda}$, also depending on $\ddelta$, such that 
for all $\beta\ge 2$  one has
{\allowdisplaybreaks
\begin{align}\label{corlongwaydown}
& \int_{t_0-r^2}^{t_0} \int_\Om \eta^{\beta+2}(\ddelta+|\nabla_\e u^\e|^2)^{\frac{p-2}{2}} |Zu^\e|^\beta \sum_{i,j=1}^{2n+1} |X^\e_iX^\e_j u^\e|^2 + \max_{t \in (t_0-r^2, t_0]} \int_\Om
 \eta^{\beta+2}|Zu^\e|^\beta |\nabla_\e u^\e|^2 
\\ 
& +  (\beta+ 1)^2\int_{t_0-r^2}^{t_0} \int_\Om (\ddelta+|\nabla_\e u^\e|^2)^{\frac{p-2}{2}} |\nabla_\e Zu^\e|^2   |Zu^\e|^{\beta-2} \eta^{\beta+2} 
|\nabla_\e u^\e|^2
\notag\\
& \leq C_{\lambda, \Lambda}  (\beta+1)^2(1+ |\nabla_\e \eta||_{L^\infty}^2)\int_{t_0-r^2}^{t_0} \int_\Om \eta^{\beta}  (\ddelta+|\nabla_\e u^\e|^2)^{\frac{p}{2}} |Zu^\e|^{\beta-2} \sum_{i,j=1}^{2n+1} |X^\e_iX^\e_j u^\e|^2
\notag\\
& + C_{\lambda, \Lambda} (\beta+1)^2
\int_{t_0-r^2}^{t_0}  \int_\Om
(\ddelta+|\nabla_\e u^\e|^2)^{\frac{p+2}{2}} |Zu^\e|^{\beta-2}    \eta^{\beta+2}.
\notag
\end{align}}
\end{cor}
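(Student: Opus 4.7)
The plan is to deduce the corollary from Lemma \ref{cacciopoXZ} by choosing $t_1=t_0-r^2$ and letting $t_2\in(t_0-r^2, t_0]$ vary, exploiting the fact that $\eta$ vanishes on the parabolic boundary. Since $\eta(\cdot, t_1)\equiv 0$, the initial-value contributions $\int_\Om \eta^{\beta+2}|Zu^\e|^\beta|\nabla_\e u^\e|^2\big|_{t=t_1}$ on the LHS and $\frac{\al}{(\beta+2)^2}\int_\Om |Zu^\e|^{\beta+2}\eta^{\beta+4}\big|_{t=t_1}$ on the RHS of \eqref{longwaydown} vanish. Since the two remaining time-integrals on the LHS are non-decreasing in $t_2$, one obtains for every $t_2\in(t_0-r^2, t_0]$ a bound on $\int_\Om\eta^{\beta+2}|Zu^\e|^\beta|\nabla_\e u^\e|^2(\cdot, t_2)$ by the full RHS over $[t_0-r^2, t_0]$; taking the supremum then yields the $\max_t$ contribution in the LHS of \eqref{corlongwaydown}.

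The crucial step is to absorb the two contributions in \eqref{longwaydown} that involve $\partial_t\eta$, namely the terms with $|Zu^\e|^{\beta+2}\eta^{\beta+3}|\partial_t\eta|$ and $|Zu^\e|^\beta|\nabla_\e u^\e|^2\,\partial_t(\eta^{\beta+2})$. Using the hypothesis $\|\partial_t\eta\|_{L^\infty}\le C_{\lambda,\Lambda}(1+\|\nabla_\e\eta\|^2_{L^\infty})$, they reduce respectively to multiples of $\int\!\!\int|Zu^\e|^{\beta+2}\eta^{\beta+3}$ and of $(\beta+1)(1+\|\nabla_\e\eta\|^2_{L^\infty})\int\!\!\int\eta^{\beta+1}|Zu^\e|^\beta|\nabla_\e u^\e|^2$. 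The key structural fact is that $Z=[X_\ell, X_{\ell+s_\ell n}]$ is a commutator of purely horizontal fields and $X^\e_i=X_i$ for $i\le 2n$, whence
\[
|Zu^\e|^2 \le C_n\sum_{i,j=1}^{2n}|X^\e_iX^\e_j u^\e|^2.
\]
Combined with $\ddelta>0$ and $p\ge 2$, which yield $(\ddelta+|\nabla_\e u^\e|^2)^{(p-2)/2}\ge \ddelta^{(p-2)/2}$ and $|\nabla_\e u^\e|^2\le \ddelta^{(2-p)/2}(\ddelta+|\nabla_\e u^\e|^2)^{p/2}$, the first reduced integral is dominated by $C_{\ddelta}\,\al\int\!\!\int\eta^{\beta+2}(\ddelta+|\nabla_\e u^\e|^2)^{(p-2)/2}|Zu^\e|^\beta\sum|X^\e_iX^\e_ju^\e|^2$, which is absorbed into the first term of the LHS of \eqref{corlongwaydown} by choosing $\al$ small; the second becomes $\le C_\ddelta(\beta+1)(1+\|\nabla_\e\eta\|^2_{L^\infty})\int\!\!\int\eta^\beta(\ddelta+|\nabla_\e u^\e|^2)^{p/2}|Zu^\e|^{\beta-2}\sum|X^\e_iX^\e_ju^\e|^2$, matching the first term on the RHS of \eqref{corlongwaydown}.

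The remaining terms from \eqref{longwaydown} already have the desired form, using $\eta\le 1$ to replace $\eta^\beta+\eta^{\beta+4}$ by $2\eta^\beta$. The main obstacle is precisely the absorption above, which relies essentially on the non-degeneracy $\ddelta>0$ --- the same usage recorded at \eqref{need delta>0 } preceding Corollary \ref{lemma3.4bis}. Without $\ddelta>0$, the weight $(\ddelta+|\nabla_\e u^\e|^2)^{(p-2)/2}$ would vanish where $\nabla_\e u^\e=0$, obstructing the conversion of the pointwise bound $|Zu^\e|^2\le C_n\sum|X^\e_iX^\e_j u^\e|^2$ into a weighted estimate of the type appearing on the LHS of \eqref{corlongwaydown}.
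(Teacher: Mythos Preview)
Your proposal is correct and follows essentially the same approach as the paper: the paper's own proof simply says the statement follows ``by standard parabolic pde arguments, after choosing $\alpha$ appropriately small in \eqref{longwaydown} and applying \eqref{need delta>0 }, once one notes that $|Zu^\e|\le \sum_{i,j=1}^{2n+1} |X^\e_i X^\e_j u^\e|$.'' You have spelled out exactly these steps---varying $t_2$ and taking the supremum, using the commutator identity to bound $|Zu^\e|^2$ by the horizontal Hessian, invoking $\ddelta>0$ to insert the weight, and absorbing via small $\alpha$---so your argument is a faithful expansion of the paper's sketch.
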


\begin{proof} The statement follows at once by standard parabolic pde arguments, after choosing $\alpha$ appropriately small in  \eqref{longwaydown} and applying  \eqref{need delta>0 }, once one notes that $|Zu^\e|\le  \sum_{i,j=1}^{2n+1} |X^\e_i X^\e_j u^\e|$.

\end{proof}

\begin{cor}\label{CCGMc2.7} 
In the hypotheses of the previous corollary we have 
\begin{align*}
& \sum_{i,j=1}^{2n+1}\int_{t_0}^{t_0-r^2} \int_\Om \eta^{\beta+2}(\ddelta+|\nabla_\e u^\e|^2)^{\frac{p-2}{2}} |Zu^\e|^\beta|X^\e_iX^\e_j u^\e|^2 + \max_{t \in (t_0-r^2, t_0]} \int_\Om
 \eta^{\beta+2}|Zu^\e|^\beta |\nabla_\e u^\e|^2
\\  
& + (\beta+1)^2 \int_{t_0}^{t_0-r^2} \int_\Om (\ddelta+|\nabla_\e u^\e|^2)^{\frac{p-2}{2}} |\nabla_\e u^\e|^2   |Zu^\e|^{\beta-2} \eta^{\beta+2} (X^\e_lu^\e)^2 
 \\
 & \le C^{\beta/2}(\beta+1)^\beta ( ||\nabla_\e \eta ||^2_{L^\infty} +1))^{\beta/2} \sum_{i,j=1}^{2n+1}\int_{t_0}^{t_0-r^2} \int_{\Om}
 \eta^\beta \Big(
\ddelta + |\nabla_\e u^\e|^2 \Big)^{\frac{p-2+\beta}{2}}
|X^\e_i X^\e_j u^\e|^2,
\end{align*}
where $c = c(n, p,L) > 0$.
\end{cor}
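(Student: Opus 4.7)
The plan is to iterate Corollary \ref{nablagrande} a total of $\beta/2$ times, exploiting at each step the commutator bound
\[
|Zu^\e|^2 \le c_0 \sum_{i,j=1}^{2n+1}|X^\e_i X^\e_j u^\e|^2,
\]
which follows from $Z=[X_i,X_{n+i}]$. Each iteration trades two powers of $|Zu^\e|$ for one extra power of $\ddelta + |\nabla_\e u^\e|^2$ on the right-hand side, so $\beta/2$ iterations suffice to convert the factor $|Zu^\e|^\beta(\ddelta+|\nabla_\e u^\e|^2)^{(p-2)/2}$ into $(\ddelta+|\nabla_\e u^\e|^2)^{(p-2+\beta)/2}$, as required.

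Formally I would set up an induction on even integers $\beta\ge 2$. For the inductive step (from $\beta-2$ to $\beta$) apply Corollary \ref{nablagrande} at parameter $\beta$: the LHS there dominates the three terms in the LHS of our corollary, while its RHS consists of two pieces, both carrying the factor $|Zu^\e|^{\beta-2}$. On the residual
\[
C(\beta+1)^2\int \eta^{\beta+2}(\ddelta+|\nabla_\e u^\e|^2)^{(p+2)/2}|Zu^\e|^{\beta-2},
\]
apply the commutator bound to a single factor of $|Zu^\e|^2$; this produces a factor $\sum|X^\e_iX^\e_j u^\e|^2$ at the cost of $c_0$ and reduces $|Zu^\e|^{\beta-2}$ to $|Zu^\e|^{\beta-4}$. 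The resulting integral then matches, after using $\eta\le 1$ to replace $\eta^{\beta+2}$ by $\eta^\beta$, the LHS of the inductive hypothesis at parameter $\beta-2$, up to one multiplicative factor of $\ddelta+|\nabla_\e u^\e|^2$ which raises the gradient exponent to $(p-2+\beta)/2$. The first RHS term of Corollary \ref{nablagrande}, which already carries $(\ddelta+|\nabla_\e u^\e|^2)^{p/2}|Zu^\e|^{\beta-2}\sum|X^\e_i X^\e_j u^\e|^2$, is handled in the same way.

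The base case $\beta=2$ requires a separate argument, since $|Zu^\e|^{\beta-2}=1$ leaves no factor of $|Zu^\e|^2$ on which to apply the commutator bound. Here one invokes Corollary \ref{lemma3.4bis} together with the non-degeneracy inequality \eqref{need delta>0 }, which use $\ddelta>0$ to control $\int\eta^4(\ddelta+|\nabla_\e u^\e|^2)^{(p+2)/2}$ by $\int\eta^2(\ddelta+|\nabla_\e u^\e|^2)^{p/2}\sum|X^\e_iX^\e_j u^\e|^2$ up to a $\delta$-dependent constant. Accumulating constants $C(\beta_k+1)^2(1+\|\nabla_\e\eta\|_\infty^2)$ over $k=0,\ldots,\beta/2-1$ produces the claimed $C^{\beta/2}(\beta+1)^\beta(1+\|\nabla_\e\eta\|_\infty^2)^{\beta/2}$. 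The principal obstacle is the structural mismatch between the LHS of Corollary \ref{nablagrande} (of the form $(\ddelta+|\nabla_\e u^\e|^2)^{(p-2)/2}|Zu^\e|^\beta\sum|X^\e_iX^\e_j u^\e|^2$) and its first RHS term (of the form $(\ddelta+|\nabla_\e u^\e|^2)^{p/2}|Zu^\e|^{\beta-2}\sum|X^\e_iX^\e_j u^\e|^2$); reconciling these via the commutator bound at every step, so that the inductive hypothesis is applicable, is the crux of the iteration.
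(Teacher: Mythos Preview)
Your iteration does not close, and the gap is precisely the one you flag at the end. After applying Corollary~\ref{nablagrande} at level $\beta$ you obtain on the right-hand side integrals of the form
\[
\int \eta^{\beta}(\ddelta+|\nabla_\e u^\e|^2)^{p/2}|Zu^\e|^{\beta-2}\sum_{i,j}|X^\e_iX^\e_j u^\e|^2,
\]
whereas the inductive hypothesis at level $\beta-2$ controls
\[
\int \eta^{\beta}(\ddelta+|\nabla_\e u^\e|^2)^{(p-2)/2}|Zu^\e|^{\beta-2}\sum_{i,j}|X^\e_iX^\e_j u^\e|^2.
\]
The difference is the pointwise weight $(\ddelta+|\nabla_\e u^\e|^2)$. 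You propose to carry this along as ``one multiplicative factor\ldots which raises the gradient exponent to $(p-2+\beta)/2$'', but that maneuver is illegitimate: the inductive hypothesis is an inequality between \emph{integrals}, not a pointwise bound, so you cannot multiply through by a nonconstant weight and preserve it. At this stage of the argument $|\nabla_\e u^\e|$ is not yet known to be bounded (that bound comes only later in Theorem~\ref{moser}, which uses the very corollary you are trying to prove), so there is no way to pull the factor out. The same obstruction arises for the residual term after you apply the commutator bound: you land on an integral with weight $(\ddelta+|\nabla_\e u^\e|^2)^{(p+2)/2}|Zu^\e|^{\beta-4}$, which again does not match any available inductive hypothesis.

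The paper avoids this entirely by a single application of H\"older's inequality rather than an iteration. One factors the integrand on the right-hand side of Corollary~\ref{nablagrande} as a product,
\[
\eta^{\beta}(\ddelta+|\nabla_\e u^\e|^2)^{p/2}|Zu^\e|^{\beta-2}\sum|X^\e_iX^\e_j u^\e|^2
= \Big[\eta^{\beta-2}(\ddelta+|\nabla_\e u^\e|^2)^{\frac{(p-2)(\beta-2)}{2\beta}}|Zu^\e|^{\beta-2}(\textstyle\sum|X^\e_iX^\e_ju^\e|^2)^{\frac{\beta-2}{\beta}}\Big]
\cdot\Big[\eta^{2}(\ddelta+|\nabla_\e u^\e|^2)^{\frac{p+\beta-2}{\beta}}(\textstyle\sum|X^\e_iX^\e_ju^\e|^2)^{\frac{2}{\beta}}\Big],
\]
and applies H\"older with exponents $\frac{\beta}{\beta-2}$ and $\frac{\beta}{2}$. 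The first factor, raised to $\frac{\beta}{\beta-2}$, reproduces the left-hand side of the corollary and is absorbed; the second factor, raised to $\frac{\beta}{2}$, gives exactly the target right-hand side together with the constant $\big(C(\beta+1)^2(1+\|\nabla_\e\eta\|_\infty^2)\big)^{\beta/2}$. The second term of Corollary~\ref{nablagrande} is handled the same way. No induction is needed; the commutator bound never enters.
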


\begin{proof} In order to handle the 
first term in the right-hand side of the sought for conclusion, it suffices to observe that 
\begin{align*}
& C(\beta+1)^2 ( ||\nabla_\e \eta ||^2_{L^\infty} +1)
 \eta^\beta \Big(
\ddelta + |\nabla_\e u^\e|^2 \Big)^{p/2}
|Zu|^{\beta -2}\sum_{i,j=1}^{2n+1}|X^\e_i X^\e_j u^\e|^2 =
\\
& =
 \eta^{\beta-2} \Big(
\ddelta + |\nabla_\e u|^2 \Big)^{(p-2)(\beta-2)/2 \beta}
|Zu|^{\beta -2}(|\sum_{i,j=1}^{2n+1}X^\e_i X^\e_j u^\e|^2 )^{(\beta -2)/\beta} 
\\
& + \eta^{2} \Big( \ddelta + |\nabla_\e u^\e|^2 \Big)^{(p+\beta -2)/\beta}
(\sum_{i,j=1}^{2n+1}|X^\e_i X^\e_j u^\e|^2 )^{2/\beta}
C(\beta+1)^2 ( ||\nabla_\e \eta ||^2_{L^\infty} +1).
\end{align*}
The conclusion then follows from  H\"older's inequality.
We can handle the second term in the same way 
{\allowdisplaybreaks
\begin{align*}
& C(\beta+1)^2 ( ||\nabla_\e \eta ||^2_{L^\infty} )
 \eta^\beta \Big(
\ddelta + |\nabla_\e u^\e|^2 \Big)^{(p+2)/2}
|Zu|^{\beta -4}\sum_{i,j=1}^{2n+1}|X^\e_i X^\e_j u^\e|^2
\\
& =
 \eta^{\beta-2} \Big(
\ddelta + |\nabla_\e u|^2 \Big)^{\frac{(p-2)(\beta-4)}{2 \beta}}
|Zu|^{\beta -4}(|\sum_{i,j=1}^{2n+1}X^\e_i X^\e_j u^\e|^2 )^{(\beta - 4)/\beta} 
\\
& \times \eta^{2} \Big( \ddelta + |\nabla_\e u^\e|^2 \Big)^{2(p+\beta -2)/\beta}
(\sum_{i,j=1}^{2n+1}|X^\e_i X^\e_j u^\e|^2 )^{4/\beta}
C(\beta+1)^2 ( ||\nabla_\e \eta ||^2_{L^\infty} +1).
\end{align*}}

\end{proof}

The key step in the proof of the Lipschitz regularity of solutions is the following Caccioppoli type inequality which is a parabolic analogue of \cite[Theorem 3.1]{Zhong}.

\begin{thrm}\label{CCGMt2.8} 
Let $u^\e$ be a  solution of \eqref{approx1} in $\Om\times (0,T)$ and 
$B_{\e}(x_0,r) \times (t_0-r^2, t_0)$ a parabolic cylinder. 
Let  $\eta\in C^{\infty}( B_{\e}(x_0,r) \times (t_0-r^2, t_0])$
be a non-negative test function $\eta\leq 1,$ which vanishes on the parabolic boundary
such that  there exists a constant $C_{\lambda, \Lambda}>1$ for which
$||\partial_t \eta||_{L^\infty}\leq C_{\lambda, \Lambda} (1 + ||\nabla_\e\eta||^2_{L^\infty}).$
 Set $t_1 = t_0-r^2, t_2=t_0$. There exist constants $C,K>0$ depending on $\ddelta$ such that 
for all $\beta\ge 2$  one has
{\allowdisplaybreaks
\begin{align*}
& \int_{t_1} ^{t_2} \int_\Omega
\eta^2(\ddelta + |\nabla_\e u^\e|^2)^{(p-2+\beta)/2} \sum_{i,j=1}^{2n+1} |X^\e_i X^\e_j u^\e|^2 dx dt
 + \frac{1}{\beta+2} \max_{t \in (t_0-r^2, t_0]}\int_\Omega  (\ddelta+ |\nabla_\e u^\e|^2)^{\frac{\beta}{2}+1}\eta^2
\\
& \leq C (\beta + 1)^5 (||\nabla_\e\eta||^2_{
L^{\infty}} + ||\eta Z\eta||_{L^{\infty}} + 1)
\int_{t_1} ^{t_2} \int_{spt(\eta)}
(\ddelta + |\nabla_\e u^\e|^2)^{(p+\beta)/2}.
\end{align*}}
Here, $C$ depends only on $ p,$ and $\Lambda$.
\end{thrm}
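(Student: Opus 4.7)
My plan starts from the basic Caccioppoli inequality \eqref{caccio2} of Lemma~\ref{lemma3.4}, which bounds the left-hand side of the theorem by
$$C_0\int(\eta^2+|\nabla_\e\eta|^2+\eta|Z\eta|)(\ddelta+|\nabla_\e u^\e|^2)^{(p+\beta)/2}+C_0(\beta+1)^4 J_\beta,$$
with $J_\beta:=\int_{t_1}^{t_2}\int_\Omega\eta^2(\ddelta+|\nabla_\e u^\e|^2)^{(p+\beta-2)/2}|Zu^\e|^2$. The first summand is already of the form claimed by the theorem, so the entire difficulty lies in absorbing the ``commutator'' term $(\beta+1)^4 J_\beta$. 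The naive pointwise bound $|Zu^\e|\lesssim\sum|X_i^\e X_j^\e u^\e|$ only yields $(\beta+1)^4 H_\beta$, where $H_\beta$ is the horizontal-Hessian integral already on the left-hand side, and is therefore useless because of its large prefactor.

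To break this, I would integrate $J_\beta$ by parts using $Z=[X_1^\e,X_{n+1}^\e]$ on one of the two copies of $Zu^\e$: setting $f:=\eta^2(\ddelta+|\nabla_\e u^\e|^2)^{(p+\beta-2)/2}$, one has
$$\int f(Zu^\e)^2=-\int X_1^\e(f\,Zu^\e)\,X_{n+1}^\e u^\e+\int X_{n+1}^\e(f\,Zu^\e)\,X_1^\e u^\e.$$
Expanding the derivatives of $f$ and using $|\nabla_\e f|\lesssim \eta|\nabla_\e\eta|(\ddelta+|\nabla_\e u^\e|^2)^{(p+\beta-2)/2}+(\beta+1)\eta^2(\ddelta+|\nabla_\e u^\e|^2)^{(p+\beta-4)/2}|\nabla_\e u^\e||\nabla_\e^2 u^\e|$ reduces the bound on $J_\beta$ to three families of integrals: a $|\nabla_\e\eta|^2$-weighted integral already of the desired form; an $\eta^2(\ddelta+|\nabla_\e u^\e|^2)^{(p+\beta-2)/2}|\nabla_\e^2 u^\e||Zu^\e|$ integral which Young's inequality splits into a small multiple of $H_\beta$ plus a small multiple of $J_\beta$ (reabsorbed); and the crucial term $\int\eta^2(\ddelta+|\nabla_\e u^\e|^2)^{(p+\beta-2)/2}|\nabla_\e Zu^\e|^2$.

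The main technical step is the control of this last integral. For this I would invoke the weighted Caccioppoli of Corollary~\ref{nablagrande}, which bounds precisely an integral of $|\nabla_\e Zu^\e|^2$ weighted by $|Zu^\e|^{\beta-2}\eta^{\beta+2}|\nabla_\e u^\e|^2$, combined with Corollary~\ref{CCGMc2.7}, whose purpose is to exchange a weight $|Zu^\e|^\beta$ for $(\ddelta+|\nabla_\e u^\e|^2)^{\beta/2}$ at the cost of a factor $C^{\beta/2}(\beta+1)^\beta(\|\nabla_\e\eta\|^2+1)^{\beta/2}$. Combining these through H\"older's inequality and choosing the Young parameters carefully yields
$$(\beta+1)^4 J_\beta\le \tfrac12 H_\beta+\tfrac12\max_{t\in[t_1,t_2]}\!\int_\Omega\eta^2(\ddelta+|\nabla_\e u^\e|^2)^{(\beta+2)/2}+C(\beta+1)^5(\|\nabla_\e\eta\|^2_{L^\infty}+\|\eta Z\eta\|_{L^\infty}+1)\int_{spt(\eta)}(\ddelta+|\nabla_\e u^\e|^2)^{(p+\beta)/2},$$
which is exactly what is needed to conclude upon reinserting into \eqref{caccio2}. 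The $\max_t$ factor in the statement is then obtained by running the whole argument on the cylinder $(t_1,t)$ for arbitrary $t\in[t_1,t_2]$ and taking the supremum. The base case $\beta=0$ is separately recorded as Corollary~\ref{lemma3.4bis}, and the $\ddelta>0$ hypothesis enters the scheme through the inequality \eqref{need delta>0 }.

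The hardest part of the argument is the bookkeeping of the $\beta$-dependent constants: each application of Corollary~\ref{CCGMc2.7} loses a factor growing like $(\beta+1)^\beta$, and the Young parameters used to match exponents must be tuned to compensate that growth, so that the final estimate exhibits the polynomial rate $(\beta+1)^5$ rather than an exponential one. This is the delicate combinatorial point, which mirrors the elliptic scheme of Zhong~\cite{Zhong} but is complicated here by the presence of the time derivative, requiring the use of the sup-in-time term from Corollary~\ref{nablagrande} to close the estimate.
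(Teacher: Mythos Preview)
Your plan diverges from the paper's at the key step of estimating $J_\beta=\int\eta^2(\ddelta+|\nabla_\e u^\e|^2)^{(p+\beta-2)/2}|Zu^\e|^2$, and the divergence introduces a real difficulty. The paper does \emph{not} integrate by parts via $Z=[X_1^\e,X_{n+1}^\e]$; instead it applies H\"older's inequality directly, writing
\[
J_\beta\le\Big(\int\eta^{\beta+2}(\ddelta+|\nabla_\e u^\e|^2)^{\frac{p-2}{2}}|Zu^\e|^{\beta+2}\Big)^{\frac{2}{\beta+2}}\Big(\int_{spt(\eta)}(\ddelta+|\nabla_\e u^\e|^2)^{\frac{p+\beta}{2}}\Big)^{\frac{\beta}{\beta+2}},
\]
then uses $|Zu^\e|^{\beta+2}\le|Zu^\e|^\beta\sum_{i,j}|X_i^\e X_j^\e u^\e|^2$ and applies Corollary~\ref{CCGMc2.7} to the first factor, followed by Young's inequality with parameters tuned so that exactly $\tfrac{1}{2C_0(\beta+1)^4}$ of the Hessian integral is produced and can be absorbed into the left side of \eqref{caccio2}. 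The crucial feature of this route is that the H\"older step removes $\beta$ from the exponent of the weight $(\ddelta+|\nabla_\e u^\e|^2)$ in the first factor before any differentiation occurs; consequently no $(\beta+1)$ arises from differentiating that weight, and the final constant is the polynomial $(\beta+1)^5$.

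Your integration-by-parts route, by contrast, differentiates $f=\eta^2(\ddelta+|\nabla_\e u^\e|^2)^{(p+\beta-2)/2}$ and therefore produces a term of order
\[
(\beta+1)\int\eta^2(\ddelta+|\nabla_\e u^\e|^2)^{(p+\beta-2)/2}|\nabla_\e^2 u^\e|\,|Zu^\e|.
\]
Young's inequality on this term cannot yield \emph{both} a small multiple of $H_\beta$ and a reabsorbable multiple of $J_\beta$: any split $\tau H_\beta+\frac{C(\beta+1)^2}{\tau}J_\beta$ forces either $\tau\gtrsim(\beta+1)^2$ (so that after the prefactor $(\beta+1)^4$ from \eqref{caccio2} one is left with $(\beta+1)^6 H_\beta$, which cannot be absorbed) or $\frac{(\beta+1)^2}{\tau}\ge 1$ (so $J_\beta$ cannot be reabsorbed). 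This is precisely the loss that the paper's H\"older-first strategy sidesteps. In addition, the term $\int\eta^2(\ddelta+|\nabla_\e u^\e|^2)^{(p+\beta-2)/2}|\nabla_\e Zu^\e|^2$ that you single out does not match the weight structure in Corollary~\ref{nablagrande}, which carries $(\ddelta+|\nabla_\e u^\e|^2)^{(p-2)/2}|Zu^\e|^{\beta-2}\eta^{\beta+2}|\nabla_\e u^\e|^2$ rather than $(\ddelta+|\nabla_\e u^\e|^2)^{(p+\beta-2)/2}\eta^2$; bridging the two would require a further H\"older step that reintroduces the same $\beta$-tracking obstruction.
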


\begin{proof}  In view of Lemma \ref{lemma3.4}, the conclusion will follow once we provide an appropriate estimate of the
term
{\allowdisplaybreaks
\begin{align*}
\int_{t_1} ^{t_2} \int_\Omega
\eta^2(
\ddelta + |\nabla_\e u^\e|^2)^{(p-2+\beta)/
2} |Zu^\e|^2.
\end{align*}}
The first step is to apply H\"older's inequality to obtain
{\allowdisplaybreaks
\begin{align*}
& \int_{t_1} ^{t_2}\int_\Omega
\eta^2(
\ddelta + |\nabla_\e u^\e|^2)^{(p-2+\beta)/
2} |Zu^\e|^2 
\\
& \leq\Big(
\int_{t_1}^{t_2}\int
\eta^{\beta +2}(
\ddelta + |\nabla_\e u^\e|^2)^{\frac{p-2}{2}}
 |Zu^\e|^{\beta +2} dx dt\Big)^{\frac{2}{\beta +2}}\Big(\int_{t_1}^{t_2}
\int_{spt(\eta)}
(
\ddelta + |\nabla_\e u^\e|^2)^{\frac{p+\beta }{2}}\Big)^{
\frac{\beta }{\beta +2}}
\\ 
& (\text{since}\ |Zu^\e| \leq \sum_{i,j=1}^n | X_i^\e X_j^\e u^\e|)
\\
& \leq\Big(\int_{t_1}^{t_2}
\int
\eta^{\beta +2}(
\ddelta + |\nabla_\e u^\e|^2)^{\frac{p-2}{2}}
 |Zu^\e|^{\beta}\sum_{i,j=1}^n | X_i^\e X_j^\e u^\e|^2\Big)^{\frac{2}{
\beta +2}}\Big(\int_{t_1}^{t_2}
\int_{spt(\eta)}
(
\ddelta + |\nabla_\e u^\e|^2)^{\frac{p+\beta }{2}}\Big)^{
\frac{\beta }{\beta +2}}
\\
& (\text{the first integral in the right-hand side can be bounded by applying Corollary \ref{CCGMc2.7}, resulting in the estimate})
\\
& \le C^{\frac{\beta}{\beta+2}}(\beta+1)^{\frac{2\beta }{\beta +2}} ( ||\nabla_\e \eta ||^2_{L^\infty} +  1)^{\frac{\beta }{\beta +2}} \Big( \int_{t_1}^{t_2} \int_{\Om}
 \eta^\beta \Big(
\ddelta + |\nabla_\e u^\e|^2 \Big)^{\frac{p-2+\beta}{2}}
\sum_{i,j=1}^{2n+1}|X^\e_i X^\e_j u|^2\Big)^{
\frac{2}{
\beta +2}}
\\
& \times \Big(\int_{t_1}^{t_2}
\int_{spt(\eta)}
(
\ddelta + |\nabla_\e u^\e|^2)^{\frac{p+\beta}{2}}\Big)^{
\frac{\beta }{\beta +2}}
\\
& (\text{by Young' s inequality, recalling}\ C_0\ \text{from the statement of Lemma \ref{lemma3.4}}) 
\\
& \le C \frac{\beta}{\beta+2} \Big(\frac{4 C_0(\beta + 1)^4}{(\beta +2)}\Big)^{ \frac{2}{\beta}} (\beta+1)^{2} (||\nabla_\e \eta ||^2_{L^\infty} +  1)
\int_{t_1}^{t_2}
\int_{spt(\eta)}
(
\ddelta + |\nabla_\e u^\e|^2)^{\frac{p+\beta}{2}}
\\
& + \frac{1}{2 C_0(\beta+1)^4}\int_{t_1}^{t_2} \int_{\Om}
 \eta^\beta \Big(
\ddelta + |\nabla_\e u^\e|^2 \Big)^{\frac{p-2+\beta}{2}}
\sum_{i,j=1}^{2n+1}|X^\e_i X^\e_j u^\e|^2.
\end{align*}}
Now we note that 
$$\frac{\beta}{\beta+2} \Big(\frac{4 C_0(\beta + 1)^4}{(\beta +2)}\Big)^{ \frac{2}{\beta}} (\beta+1)^{2} \leq C_{\lambda, \Lambda} (\beta +1)^5. $$
Substituting the previous estimate in Lemma \ref{lemma3.4}, we conclude
{\allowdisplaybreaks
\begin{align*}
& \int_{t_1} ^{t_2} \int_\Omega
\eta^2(\ddelta + |\nabla_\e u^\e|^2)^{\frac{p-2+\beta}{2}}|X^\e_i X^\e_j u^\e|^2 
\\
& \leq C_{\lambda, \Lambda}(\beta+1)^{5} (||\nabla_\e \eta ||^2_{L^\infty} + ||\eta Z \eta ||_{L^\infty} + 1)
\int_{t_1} ^{t_2} \int_{spt(\eta)}
(\ddelta + |\nabla_\e u^\e|^2)^{\frac{p+\beta}{2}}.
\end{align*}}
This completes the proof of the theorem.
 
\end{proof}

In the next result, from Lemma \ref{SobXU} and Theorem \ref{CCGMt2.8} we will establish local integrability of $\nabla_\e u^\e$ in  $L^q$ for every $q\ge p$.

\begin{lemma}\label{GainDu}
Let $u^\e$ be a solution of \eqref{approx1} in $Q$. 
For any open ball $B\subset \subset \Om$ and $T>t_2\ge t_1\ge 0$, consider a test  function 
$\eta\in C^{^\infty}([0,T] \times B)$, vanishing on the 
parabolic boundary,  such that $\eta \le1$, 
$ ||\partial_t\eta||\leq C ||\nabla_\e\eta||^2$.
For every $\beta\ge 0 $, there exists a constant 
$C=C(n, p, \lambda, \Lambda, d(B, \p\Om), T-t_2, \ddelta)>0$, 
such that 
\begin{align*}
& \int_{t_1} ^{t_2}\int_\Omega
(\ddelta + |\nabla_\e u^\e|^2)^{(\beta+p+2)/2} |\eta|^{\beta+2}
\leq C^\beta (\beta+1)^\beta
\int_{t_1} ^{t_2}\int_B
(\ddelta + |\nabla_\e u^\e|^2)^{p/2}.
\end{align*}
\end{lemma}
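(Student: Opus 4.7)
The approach is a Moser-type iteration that couples the interpolation estimate of Lemma \ref{SobXU} with the Caccioppoli estimate of Theorem \ref{CCGMt2.8}. The proof naturally splits into a one-step integrability gain and a telescoping iteration over nested cutoffs.

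First, I would derive a one-step reverse inequality: for any smooth cutoff $\psi\le 1$ vanishing on the parabolic boundary of a cylinder and satisfying $\|\partial_t\psi\|_\infty\lesssim\|\nabla_\e\psi\|_\infty^2$, and for any $\beta\ge 0$,
$$\int \psi^{\beta+2}(\ddelta+|\nabla_\e u^\e|^2)^{(\beta+p+2)/2}\le K(\beta+1)^{a}\bigl(\|\nabla_\e\psi\|_\infty^2+\|\psi Z\psi\|_\infty+1\bigr)\int_{\operatorname{spt}\psi}(\ddelta+|\nabla_\e u^\e|^2)^{(\beta+p)/2},$$
where $a$ is a universal exponent and $K=K(n,p,\lambda,\Lambda,\ddelta)$. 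To establish this, apply Lemma \ref{SobXU} with $\eta=\psi$: the left side is then controlled by a Hessian term $\lesssim(\beta+1)^2\int\psi^{\beta+2}(\ddelta+|\nabla_\e u^\e|^2)^{(p+\beta-2)/2}\sum_{i,j}|X_i^\e X_j^\e u^\e|^2$ plus a lower-order remainder already of the desired form. The Hessian term is in turn bounded via Theorem \ref{CCGMt2.8} applied with the replacement cutoff $\psi^{(\beta+2)/2}$, whose derivative norms are controlled, via $\psi\le 1$, by $O(\beta+1)$ times those of $\psi$.

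Second, I would iterate this estimate along a nested family of cutoffs all supported inside $B\times[t_1,t_2]$. Let $d_0=\operatorname{dist}(\operatorname{spt}\eta,\partial(B\times[t_1,t_2]))$, bounded below in terms of $d(B,\partial\Omega)$ and $T-t_2$. Set $N=\lceil\beta/2\rceil$ and linearly interpolate nested parabolic cylinders $Q_N\subset Q_{N-1}\subset\cdots\subset Q_0\subset B\times[t_1,t_2]$ with $Q_N\supset\operatorname{spt}\eta$ and $\operatorname{dist}(\partial Q_k,\partial Q_{k-1})\sim d_0/N$. Choose cutoffs $\zeta_k\le 1$ equal to $1$ on $Q_k$, vanishing outside $Q_{k-1}$, and satisfying $\|\nabla_\e\zeta_k\|_\infty,\|\zeta_k Z\zeta_k\|_\infty^{1/2}\lesssim N/d_0$ together with the parabolic compatibility. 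Applying the one-step estimate to $\zeta_k$ at exponent $\beta_k=2(N-k)$, and using $\zeta_k\equiv 1$ on $Q_k$, yields
$$\int_{Q_k}(\ddelta+|\nabla_\e u^\e|^2)^{(p+2(N-k)+2)/2}\le K(2(N-k)+1)^a(N/d_0)^2\int_{Q_{k-1}}(\ddelta+|\nabla_\e u^\e|^2)^{(p+2(N-k))/2}.$$
Telescoping from $k=N$ down to $k=1$ and invoking Stirling's formula $\prod_{j=1}^{N}(2j-1)\sim(2N/e)^N$, the accumulated constant $\prod K(2j-1)^a(N/d_0)^2$ is dominated by $C^\beta(\beta+1)^\beta$ for a suitable $C$ depending on $d_0$ and the structure data.

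Finally, since $\eta^{\beta+2}\le\chi_{\operatorname{spt}\eta}\le\chi_{Q_N}$ by construction and $Q_0\subset B\times[t_1,t_2]$, the chain of inequalities delivers the claimed bound. The principal technical obstacle is the precise tracking of the accumulated polynomial and cutoff-norm factors through the $N\sim\beta/2$ iterations: the choice of a \emph{linear} (rather than geometric) rate of shrinkage for the cylinders $Q_k$ is essential, since only this choice, matched with Stirling's asymptotic, yields the tight growth rate $C^\beta(\beta+1)^\beta$ stated in the lemma—a geometric shrinkage would produce an accumulated factor of the super-exponential size $\exp(c\beta^2)$ instead.
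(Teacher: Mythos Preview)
Your approach is essentially the paper's: combine Lemma~\ref{SobXU} with Theorem~\ref{CCGMt2.8} for a one-step gain in integrability, then telescope $\lceil\beta/2\rceil$ times over nested cutoffs. The paper compresses the entire iteration into the phrase ``iterating the latter $[\beta]/2$ times, the conclusion follows,'' whereas you spell out the linear shrinkage explicitly.

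Two minor remarks. First, the replacement cutoff $\psi^{(\beta+2)/2}$ is unnecessary: since $\psi\le1$ one has $\psi^{\beta+2}\le\psi^2$, so the Hessian term coming out of Lemma~\ref{SobXU} is already dominated by the left side of Theorem~\ref{CCGMt2.8} with the original cutoff $\psi$. Second, your Stirling bookkeeping does not quite deliver $(\beta+1)^\beta$: the one-step carries a factor $(\beta+1)^a$ with $a\ge7$, and $N\sim\beta/2$ iterations with cutoff norms $\sim(N/d_0)^2$ accumulate to $C^\beta(\beta+1)^{(a+2)\beta/2}$, so the exponent on $(\beta+1)$ is of order $\tfrac{9}{2}\beta$ rather than $\beta$. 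The paper's terse argument has the same slippage; it is immaterial since the lemma is used only qualitatively downstream.
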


\begin{proof}
We begin by examining the case $\beta=0$. Applying Lemma \ref{SobXU} and Corollary \ref{lemma3.4bis} one can find positive constants $C_1,C_2, C_3$, depending on $n, p, \lambda, \Lambda, d(B, \p\Om), T-t_2, \ddelta$, such that
\begin{align*}
& \int_{t_1} ^{t_2}\int_\Omega
(\ddelta + |\nabla_\e u^\e|^2)^{(p+2)/2} |\eta|^{2} \leq 
C _1(p +1)^2 \int_{t_1} ^{t_2}\int_\Omega
(\ddelta+  |\nabla_\e u^\e|^2)^{\frac{p-2}{2}} \sum_{i,j}| X_j^\e X_i^\e u^\e|^2 |\eta|^{ 2}
\\
& + C_2\beta^2 \int_{t_1} ^{t_2}\int_\Omega
(\ddelta + |\nabla_\e u^\e|^2)^{p/2} ( |\eta|^{2}
+ |\nabla_\e \eta |^2) \le
C_3\int_{t_1}^{t_2}\int_\Om (\ddelta+|\nabla_\e u^\e|^2)^{p/2}\Big(\eta^2 + |\nabla_\e \eta|^2 + |\eta Z\eta|\Big),
\end{align*}
concluding the proof in the case $\beta=0$.
Next, we consider the range $\beta\ge 2$. The interpolation inequality Lemma \ref{SobXU} and   Theorem \ref{CCGMt2.8} imply the existence of positive constants $C_4,..., C_7$, depending on $n, p, \lambda, \Lambda, d(B, \p\Om), T-t_2,$ and $ \ddelta$, 
such that 
\begin{align}\label{caccio3-bis}
& \int_{t_1} ^{t_2}\int_\Omega
(\ddelta + |\nabla_\e u^\e|^2)^{(\beta+p+2)/2} |\eta|^{\beta+2} 
\\
& \leq 
C_4(\beta +p +1)^2 \int_{t_1} ^{t_2}\int_\Omega
(\ddelta+  |\nabla_\e u^\e|^2)^{\frac{p+\beta-2}{2}} \sum_{i,j}| X_j^\e X_i^\e u^\e|^2 |\eta|^{\beta + 2}
\notag\\
& + C_5\beta^2 \int_{t_1} ^{t_2}\int_\Omega
(\ddelta + |\nabla_\e u^\e|^2)^{(\beta+p)/2} |\eta|^{\beta} ( |\eta|^{2}
+ |\nabla_\e \eta |^2)
\notag\\
& \leq C_6 (\beta +p+ 1)^7 
\int_{t_1} ^{t_2} \int_{\Om}
(\ddelta + |\nabla_\e u^\e|^2)^{(p+\beta)/2} \Big(\eta^2 + |\nabla_\e \eta|^2 + |\eta Z\eta|\Big) 
\notag\\
& \le
C_7 (\beta+1)^7 (||\nabla_\e \eta ||^2_{L^\infty} + ||\eta Z \eta ||_{L^\infty} + 1)
\int_{B}
(\ddelta + |\nabla_\e u^\e|^2)^{(p+\beta)/2}.
\notag
\end{align}
Iterating the latter $[\beta]/2$ times, the conclusion follows.

\end{proof}

In the next result we establish Lipschitz bounds that are uniform in $\e$. The argument consists in implementing Moser iterations, and rests on the observation  that  the quantity  $\ddelta + |\nabla_\e u^\e|^2$ is bounded from below by $\delta>0$, and that for every $\beta\ge 0$  it
is bounded in $L^{p+\beta}$ in a parabolic cylinder, uniformly in $\e$.

In the iteration itself, we will consider metric balls $B_\e$ defined through the Carnot-Caratheodory metric associated to the Riemannian structure $g_\e$ defined by the orthonormal frame $X_1^\e,...,X_{2n+1}^\e$. We recall here that $g_\e$ converges to the sub-Riemannian structure of the Heisenberg group in the Gromov-Hausdorff sense \cite{Koranyi}, and in particular $B_\e\to B_0$ in terms of Hausdorff distance. These considerations should make it clear that the estimates in the following theorem are stable as $\e\to 0$.

\begin{thrm}\label{moser}
Let $u^\e$ be a  solution of \eqref{approx1} in $\Om\times (0,T)$ and 
$Q_0^\e= B_{\e}(x_0,r) \times (t_0-r^2, t_0)$ a parabolic cylinder contained in $\Om\times (0,T)$. For given $\sigma \in (0,1)$, there exists a constant $C=C(p,\sigma,\beta_0,\lambda,\Lambda, \ddelta)>0$ such that 
\begin{equation}\label{moser-conclusion}
\sup_{B(x_0,\sigma r)\times (t_0-(\sigma r)^2,t_0)}  (\ddelta+|\nabla_\e u^\e|^2)^{\frac{p}{2}}
\le C 
\fint_{t_0- r^2}^{t_0} \fint_{B(x_0, r)} (\ddelta+|\nabla_\e u^\e|^2)^{\frac{p}{2}}.
\end{equation}
\end{thrm}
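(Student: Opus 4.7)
The plan is to run a standard parabolic Moser iteration, using Theorem \ref{CCGMt2.8} as the Caccioppoli inequality and Lemma \ref{lemma-sobolev}(ii) as the Sobolev embedding. Throughout, set $v^\e = (\ddelta+|\nabla_\e u^\e|^2)^{1/2}$, fix a radius $\sigma < \tau < \tau' < 1$, and work with nested parabolic cylinders $Q^\e_{\tau r}\subset Q^\e_{\tau' r}\subset Q^\e_0$ with cutoff functions $\eta\in C^\infty$ which equal $1$ on $Q^\e_{\tau r}$, are supported in $Q^\e_{\tau' r}$, and satisfy $|\nabla_\e\eta|\lesssim 1/((\tau'-\tau)r)$, $|\partial_t\eta|+|\eta Z\eta|\lesssim 1/((\tau'-\tau)r)^2$ (so that the hypothesis $||\partial_t\eta||\le C(1+||\nabla_\e\eta||^2)$ required by Theorem \ref{CCGMt2.8} is satisfied up to a harmless constant). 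The non-degeneracy $\ddelta>0$ is what makes $(v^\e)^p$ bounded away from zero and makes the iteration self-contained in powers of $v^\e$; it enters only through the constant of Corollary \ref{lemma3.4bis}, which is already incorporated in Theorem \ref{CCGMt2.8}.

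The main step is to derive a reverse-H\"older/gain-of-integrability inequality. Given $\beta\ge 0$, define $w = (\ddelta+|\nabla_\e u^\e|^2)^{(p+\beta)/4} = (v^\e)^{(p+\beta)/2}$. A direct computation using the chain rule gives the pointwise bound
\[
|\nabla_\e w|^2 \;\le\; C(p+\beta)^2 (\ddelta+|\nabla_\e u^\e|^2)^{(p+\beta-2)/2} \sum_{i,j=1}^{2n+1}|X_i^\e X_j^\e u^\e|^2.
\]
Hence Theorem \ref{CCGMt2.8} (applied to $\beta$ when $\beta\ge 2$, and via Corollary \ref{lemma3.4bis} when $0\le\beta<2$) bounds both $\max_t\int(\eta w)^2$ and $\int|\nabla_\e(\eta w)|^2+\int|\partial_t\eta|^2 w^2$ by $C(\beta+1)^7\big((\tau'-\tau)r\big)^{-2}\int_{Q^\e_{\tau'r}}(v^\e)^{p+\beta}$. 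Plugging these estimates into the parabolic Sobolev embedding Lemma \ref{lemma-sobolev}(ii), whose constants are $\e$-independent by \cite{CM, CC}, yields
\[
\left(\int_{Q^\e_{\tau r}}(v^\e)^{\kappa(p+\beta)}\right)^{1/\kappa} \;\le\; \frac{C(\beta+1)^7}{((\tau'-\tau)r)^{2}}\int_{Q^\e_{\tau' r}}(v^\e)^{p+\beta}, \qquad \kappa=\frac{N_1}{N_1-2}=\frac{n+2}{n+1}>1.
\]

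With the reverse-H\"older inequality in hand, I would run the iteration on the sequence of exponents $q_k := \kappa^k p$ (so $q_{k+1}=\kappa q_k$, $\beta_k = q_k - p$) and radii $r_k := \sigma r + (1-\sigma)2^{-k}r$, taking $\tau=r_{k+1}/r$, $\tau'=r_k/r$ so that $(\tau'-\tau)r\gtrsim (1-\sigma)2^{-k}r$. Raising the previous display to the $1/q_k$ power gives
\[
\|v^\e\|_{L^{q_{k+1}}(Q^\e_{r_{k+1}})} \;\le\; \left(\frac{C\, 4^{k}(q_k+1)^7}{(1-\sigma)^{2}r^{2}}\right)^{\!1/q_k}\|v^\e\|_{L^{q_k}(Q^\e_{r_k})}.
\]
Telescoping from $k=0$ and letting $k\to\infty$ (so $q_k\to\infty$, $r_k\to\sigma r$), the series $\sum_k q_k^{-1}\log\!\big(4^{k}(q_k+1)^7\big)$ converges because $q_k$ grows geometrically, so the infinite product of constants is finite and yields
\[
\sup_{Q^\e_{\sigma r}} v^\e \;\le\; \frac{C(p,\sigma,\lambda,\Lambda,\ddelta)}{r^{(N+2)/p}}\,\|v^\e\|_{L^{p}(Q^\e_{r})},
\]
which is exactly \eqref{moser-conclusion} after rewriting the right-hand side as an averaged integral and raising to the $p$-th power.

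The main technical obstacle is to keep the whole scheme uniform in $\e$. This relies on two facts already recorded earlier: the Sobolev inequality of Lemma \ref{lemma-sobolev} with $\e$-independent constants, and the Caccioppoli estimate of Theorem \ref{CCGMt2.8} whose constants depend only on $p,\lambda,\Lambda,\ddelta$. A secondary subtlety is the base case $\beta=0$, where Theorem \ref{CCGMt2.8} is not directly applicable; it is covered by Corollary \ref{lemma3.4bis} (or equivalently by Lemma \ref{GainDu} with $\beta=0$), producing the same form of estimate. Once the iteration is set up the polynomial dependence $(\beta+1)^7$ is harmless because the telescoping exponents $1/q_k$ decay geometrically.
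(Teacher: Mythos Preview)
Your overall scheme---parabolic Moser iteration combining Theorem~\ref{CCGMt2.8} with Lemma~\ref{lemma-sobolev}(ii)---is exactly the paper's approach. However, there is a genuine exponent mismatch in your choice of auxiliary function. With $w=(v^\e)^{(p+\beta)/2}$ you need the sup-in-time bound $\max_t\int(\eta w)^2=\max_t\int\eta^2(v^\e)^{p+\beta}$, but Theorem~\ref{CCGMt2.8} applied with parameter $\beta$ only yields $\max_t\int\eta^2(v^\e)^{\beta+2}$; applying it instead with parameter $\beta'=p+\beta-2$ produces the correct sup-term on the left but forces the right-hand side to be $\int(v^\e)^{2p-2+\beta}$, which is too large to close the iteration when $p>2$. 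This mismatch is structural: the time derivative in the differentiated equation carries no weight $(v^\e)^{p-2}$, whereas the spatial ellipticity does.

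The paper resolves this by applying the Sobolev embedding to $w_\beta=(v^\e)^{(\beta+2)/2}$ rather than to $(v^\e)^{(p+\beta)/2}$, so that the time-sup term matches Theorem~\ref{CCGMt2.8} exactly. The price is that $|\nabla_\e w_\beta|^2$ then carries only the weight $(v^\e)^\beta$, not the $(v^\e)^{p-2+\beta}$ that Theorem~\ref{CCGMt2.8} controls; here the non-degeneracy $\delta>0$ is used \emph{directly}, via $(v^\e)^\beta\le\delta^{-(p-2)/2}(v^\e)^{p-2+\beta}$, to close the gradient estimate. So, contrary to your remark, $\delta>0$ enters the Moser step itself and not merely through Corollary~\ref{lemma3.4bis}. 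With this adjustment the reverse-H\"older inequality reads $\bigl(\int(v^\e)^{\kappa(\beta+2)}\bigr)^{1/\kappa}\lesssim\int(v^\e)^{p+\beta}$, and the remainder of your iteration (geometric radii, convergence of the telescoping product) goes through unchanged.
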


\begin{proof}
We recall the main steps. 
Let  us consider a family of cylinders 
$Q^\e_i= B_{\e}(x_0, r_i) \times (t_0-r_i^2, t_0)\subset\subset Q_0^\e$ and with $r_i<r_{i-1}$. 
Applying (ii) in Lemma \ref{lemma-sobolev} to the function 
$w_\beta= (\ddelta+ |\nabla_\e u^\e|^2)^{\frac{\beta+2}{4}}$, 
one obtains 
\begin{align*}
& \Bigg(\int_{t_0-r_i^2} ^{t_0} \int_{B_\e(x_0, r_i)}
(\ddelta +|\nabla_\e u^\e|^2)^{ \frac{ (\beta+2) N_1}{2(N_1-2)} } \Bigg)^\frac{N_1 - 2}{N_1}=
||w_\beta||^2_{\frac{2N_1}{N_1-2}, \frac{2N_1}{N_1-2}, Q^\e_i}
\\
& \le  ||w_\beta||^2_{2,\infty, Q^\e_i }+||\nabla_\e w_\beta||^2_{2,2,Q^\e_{i}}  
\\
& \le \int_{t_0-r_i^2} ^{t_0} \int_{B_\e(x_0, r_i)}
\eta^2(\ddelta + |\nabla_\e u^\e|^2)^{\beta/2} \sum_{i,j=1}^{2n+1} |X^\e_i X^\e_j u^\e|^2 
 + \frac{1}{\beta+2} \max_{t \in (t_0-r^2, t_0]}\int_{B_\e(x_0, r_i)} (\ddelta+ |\nabla_\e u^\e|^2)^{\frac{\beta+2}{2}}\eta^2. 
\end{align*}
Next, we set $g=(\ddelta + |\nabla_\e u^\e|^2)^{(p-2)/2}$. 
Using Theorem \ref{CCGMt2.8}, along with the fact that $(\ddelta+|\nabla_\e u^\e|)\ge \ddelta>0$, we obtain
$$\Bigg(\int_{t_0-r_i^2} ^{t_0} \int_{B_\e(x_0, r_i)}
(\ddelta+|\nabla_\e u^\e|^2)^{ \frac{ (\beta+2) N_1}{2(N_1-2)} } \Bigg)^\frac{N_1 - 2}{N_1}\le 
\frac{C(\beta+p)^6 }{(r_i-r_{i-1})^2} 
\int_{t_0-r_i^2} ^{t_0} \int_{B_\e(x_0, r_i)} g 
(\ddelta + |\nabla_\e u^\e|^2)^{ (\beta+2)/2}.
$$
Setting $q =  \frac{(\beta + 2) N_1}{N_1-1}$ and $k= \frac{N_1-1}{N_1-2} $ in the latter inequality, we deduce
\begin{align*}
& \Bigg(  \fint_{t_0-r_i^2}^{t_0}   \fint_{B(x_0,r_i)}
(\sqrt{\ddelta +|\nabla_\e u^\e|^2})^{q k}\Bigg)^\frac{1}{q\, k}
\\
& \le 
C^{\frac{1}{\beta+2}}(\beta+p)^{\frac{6}{\beta+2}} \Bigg(  \frac{r_{i-1}^{2+N}}{r_i^{N}(r_i-r_{i-1})^2}
\Bigg)^{\frac{1}{2+\beta}}
  \Bigg(  \fint_{t_0-r_{i-1}^2}^{t_0}  \fint_{B(x_0,r_{i-1})}
g (\sqrt{\ddelta +|\nabla_\e u^\e|^2})^{\beta +2 }\Bigg)^{\frac{1}{\beta + 2}}
\\
& \le 
C^{\frac{1}{\beta+2}}(\beta+p)^{\frac{6}{\beta+2}} \Bigg(  \frac{r_{i-1}^{2+N}}{r_i^{N}(r_i-r_{i-1})^2}
\Bigg)^{\frac{1}{2+\beta}}
  \Bigg(\fint_{t_0-r_{i-1}^2}^{t_0}  \fint_{B(x_0,r_{i-1})}
(\sqrt{\ddelta +|\nabla_\e u^\e|^2})^{q}\Bigg)^\frac{1}{q}.
\end{align*}
The classical Moser iteration
scheme in see \cite{moser} now applies, leading to the sought for conclusion.

\end{proof}


\section{H\"older regularity of derivatives of $u^\e$.}\label{S:hold}

This section focuses on  the proof of the second part of Theorem \ref{main epsilon}. I.e., we want to show that  for each $\ddelta, \e>0$ a weak solution $$u^\e\in L^p ((0,T), W^{1,p,\e}(\Om))\cap C^2(Q)$$  of the approximating PDE \eqref{approx1} in $Q=\Om\times (0,T)$ satisfies the H\"older estimates 
\[
||\nabla_\e u^\e ||_{C^{\alpha} (B\times (t_1,t_2) )} + ||Zu^\e||_{C^{\alpha} (B\times (t_1,t_2) )}\le C
\bigg(\int_0^T \int_\Om (\ddelta+|\nabla_\e u^\e|^2)^{\frac{p}{2}} dx dt \bigg)^{\frac{1}{p}},
\]
for any open ball $B\subset \subset \Om$ and $T>t_2\ge t_1\ge 0$,
and for some 
constants $C=C(n, p, \lambda, \Lambda, d(B, \p\Om), T-t_2, \ddelta)>0$  and 
$\al=\al (n, p, \lambda, \Lambda, d(B, \p\Om), T-t_2, \ddelta) \in (0,1)$ independent of $\e$. It is clear that the above estimate represents the $\e$-version of \eqref{c1alpha}.

We begin by studying  the regularity of the derivatives of $u^\e$ along the second layer of the Lie algebra of $\Hn$. 
First of all, we observe that  since $\ddelta>0$ is fixed,  Lemma \ref{GainDu}   and Theorem \ref{CCGMt2.8} imply that  for all $i,j=1,...,2n$, one has $|X_iX_j u^\e|$ is bounded in $L^2$ uniformly in $\e>0$.  It follows that  $Zu^\e\in L^2_{loc}(Q)$ uniformily in $\e>0$. 
In view of Lemma \ref{eqZ} we can actually obtain more.

\begin{prop}\label{Zu-reg} 
Let $u^\e$ be a  solution of \eqref{approx1} in $\Om\times (0,T)$ and 
$Q= B(x_0,r) \times (t_0-r^2, t_0)$ a parabolic cylinder contained in $\Om\times (0,T)$. There exists  constants $C=C(p,\sigma,\beta_0,\lambda,\Lambda, \ddelta)>0$ and $\al=\al(p,\sigma,\beta_0,\lambda,\Lambda, \ddelta)\in (0,1)$ such that 
 $$||Zu^\e||_{C^{\al}(Q)} + ||XZu^\e||_{L^2(Q)} \le C \bigg(||u^\e||_{L^p(2Q)}+ ||\nabla_\e u^\e||_{L^p(2Q)}\bigg).$$
 \end{prop}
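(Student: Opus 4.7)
The plan is to reduce the claim to linear parabolic regularity theory. First I would invoke the Moser estimate of Theorem \ref{moser} to obtain a constant $M$, depending on $p,\lambda,\Lambda,\ddelta$ and the geometry of $Q\subset 2Q$ but independent of $\e$, such that $|\nabla_\e u^\e|\le M$ on a cylinder $Q'$ sandwiched strictly between $Q$ and $2Q$. Once this bound is in hand, the equation satisfied by $v=Zu^\e$, which by Lemma \ref{eqZ} reads
\[
\partial_t v = \sum_{i,j=1}^{2n+1} X^\e_i\bigl(a^\e_{ij}(x,t)\,X^\e_j v\bigr) + \sum_{i=1}^{2n+1} X^\e_i\bigl(b^\e_i(x,t)\bigr),
\]
with $a^\e_{ij}(x,t)=A^\e_{i,\xi_j}(x,\nabla_\e u^\e(x,t))$ and $b^\e_i(x,t)=A^\e_{i,x_{2n+1}}(x,\nabla_\e u^\e(x,t))$, becomes a linear, uniformly subelliptic parabolic equation in divergence form on $Q'$. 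Indeed, the structure conditions \eqref{structure-epsilon} combined with $\ddelta\le \ddelta+|\nabla_\e u^\e|^2\le \ddelta+M^2$ give
\[
\lambda\,\ddelta^{(p-2)/2}|\eta|^2\ \le\ a^\e_{ij}\eta_i\eta_j\ \le\ \Lambda(\ddelta+M^2)^{(p-2)/2}|\eta|^2,\qquad |b^\e_i|\le \Lambda(\ddelta+M^2)^{(p-1)/2},
\]
with constants independent of $\e$; this is precisely where the non-degeneracy $\ddelta>0$ is crucial.

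Next I would prove the $L^2$ bound on $XZu^\e$. Choosing $\beta=0$ in Lemma \ref{stimaZu} and a standard cutoff $\eta$ supported in $Q'$, the right-hand side is controlled by $\|\nabla_\e u^\e\|_{L^\infty(Q')}$ and $\int|Zu^\e|^2$; the latter is in turn bounded by Corollary \ref{lemma3.4bis} and the pointwise inequality $|Zu^\e|\le \sum_{i,j}|X^\e_iX^\e_j u^\e|$, yielding the desired estimate in terms of $\int_{2Q}(\ddelta+|\nabla_\e u^\e|^2)^{p/2}$, which in turn is dominated by $\|u^\e\|_{L^p(2Q)}^p+\|\nabla_\e u^\e\|_{L^p(2Q)}^p$ via the bound $\nabla_\e u^\e \in L^\infty(Q')$ proven in Theorem \ref{moser}.

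For the H\"older estimate I would apply the parabolic De~Giorgi--Nash--Moser theory for linear equations of the above divergence form in the sub-Riemannian (and Riemannian-approximation) setting, as recorded in \cite{ACCN}. The prerequisite $L^\infty$ bound on $v=Zu^\e$ is obtained by a Moser iteration that uses the Caccioppoli inequality of Lemma \ref{stimaZu} coupled with the parabolic Sobolev embedding of Lemma \ref{lemma-sobolev}(ii), exactly as for \eqref{moser-conclusion}. Once $v$ is known to be locally bounded, the Harnack inequality of \cite{ACCN} applies to $v\pm\|v\|_{L^\infty}$ to give $v\in C^\alpha$ with constants depending only on $p,\lambda,\Lambda,\ddelta,M$ and the geometry, hence independent of $\e$.

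The main obstacle is not the individual estimate but rather ensuring that every step -- the Sobolev embedding, the Caccioppoli inequality, the Moser iteration, and finally the parabolic Harnack inequality -- is carried out with constants uniform in $\e$ as the Riemannian metrics $g_\e$ collapse to the sub-Riemannian structure. This uniformity is built into Lemma \ref{lemma-sobolev} and into the Harnack inequality of \cite{ACCN}, and propagates through the whole argument thanks to the lower bound $\ddelta>0$ on the ellipticity.
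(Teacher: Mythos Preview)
Your proposal is correct and follows essentially the same route as the paper: bound $|\nabla_\e u^\e|$ via Theorem \ref{moser}, rewrite the equation for $Zu^\e$ from Lemma \ref{eqZ} as a linear uniformly parabolic divergence-form equation with bounded coefficients (crucially using $\ddelta>0$), obtain the $L^2$ gradient bound from the Caccioppoli inequality, and then invoke the $\e$-uniform Harnack theory of \cite{CCR,ACCN} for the H\"older estimate. The paper is somewhat terser, citing Theorems \ref{CCGMt2.8} and \ref{moser} directly for the initial $L^2$ bound on $Zu^\e$ and leaving the Moser iteration for local boundedness of $Zu^\e$ implicit in the reference to \cite{ACCN}, but the substance is the same.
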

\begin{proof}
In view of Theorem \ref{moser}, we observe that  $|\nabla_\e u^\e|$ is bounded 
and recalling \ref{eqZ}, one deduces  that for each $\e>0$ the smooth function $w^\e=Zu^\e$ satisfies the PDE
\begin{equation}\label{Z-eq}
\partial_t w^\e=
\sum_{i=1}^{2n+1}
X^\e_i\bigg( \sum_{j=1}^{2n+1}
a_{ij}^\e(x,t) X^\e_j w^\e +f^\e_i(x,t)\bigg),
\end{equation}
where
$$a_{ij}^\e(x,t) = A^\e_{i, \xi_j}(x, \nabla_\e u^\e)\ \ \ \text{and}\ \ \  f^\e_i(x,t)=A^\e_{i, x_{2n+1}}(x, \nabla_\e u^\e),
$$
are locally bounded in $Q$,  unformly in $\e>0$, and for every $\eta\in \R^{2n+1}$ and for a.e. $x\in \Omega$ satisfy
$$\lambda |\eta|^2 \le \sum_{i,j=1}^{2n+1} a^\e_{ij}(x,t) \eta_i \eta_j \le \Lambda |\eta|^2.$$
Thanks to Theorems \ref{CCGMt2.8} and \ref{moser},  and observing that $\delta>0$, one immediately infers that 
 $w^0=Zu$ is locally in $L^2$. But then, the Caccioppoli inequality implies  that $\nabla_\e Zu^\e$ is in $L^2_{loc}(Q)$, uniformly in $\e>0$.  The Harnack inequality established in \cite{CCR} and \cite{ACCN} yields interior H\"older estimates for $w^\e$ in $Q$, which are stable as  $\e\to 0$.
 \end{proof}
 
 \begin{rmrk} Actually, a stronger result holds. Let $\alpha\in (0,1)$ denote the H\"older exponent of $Zu^\e$ (which is uniform in $\e>0$). By observing that $w^\e-w^\e(x_0,t_0)$ is also a solution of \eqref{Z-eq}, then a standard Caccioppoli type argument yields
 \begin{equation}\label{Zu in Morrey}\int_{t_0-r^2}^{t_0}\int_{B} |\nabla_\e Zu |^2 dx dt \le C \frac{1}{r^{2}}\int_{t_0-(2r)^2}^{t_0}\int_{2B} |w^\e-w^\e(x_0,t_0)|^2 dx dt 
 \le C r^{\alpha-2} r^{2n+2+2}.\end{equation}
This shows, in particular, that $|\nabla_\e Zu|$ belongs to the parabolic Morrey class $M^{2,\alpha/2}(Q)$, where for $\lambda\in (0,1)$ and $q\ge 1$ we have indicated with $M^{q,\lambda}(Q)$ the space of all functions $f\in L^q(Q)$ such that for all $B\subset \Omega$, and $0<t_0<T$, one has
\begin{equation}\label{morrey-d}
\sup_{r>0} r^{-(2n+4)}\int_{ \min(t_0-r^2,0)}^{t_0}
\int_{B\cap \Omega}  |f|^q dx dt \le Cr^{q(\lambda-1)}. 
\end{equation}
\end{rmrk}
We also recall that the parabolic Campanato spaces $\mathscr{L}^{q,\lambda}(Q)$ is the collection of all $f\in L^q(Q)$ such that for all $B=B(x_0,r) \subset \Omega$, and $0<t_0<T$, one has
\begin{equation}\label{campanato-d}
\sup_{r>0} r^{-(2n+4)}\int_{ \min(t_0-r^2,0)}^{t_0}
\int_{B\cap \Omega}  |f-f_{(x_0,t_0),r}|^q dx dt \le Cr^{q(\lambda-1)}. 
\end{equation}
Here, we have set $$f_{(x_0,t_0)} = r^{-(2n+4)}\int_{ \min(t_0-r^2,0)}^{t_0}
\int_{B\cap \Omega} f(x,t) dx dt.$$
A standard argument, see for instance \cite{daprato}, shows that the Campanato space is isomorphic to the space of H\"older continuous functions. In particular, we rely on the following instance of this general result.

\begin{lemma}\label{campanato-holder}
Let $K\subset \subset Q$. There exists $M,r_0>0$ such that for any $(x_0,t_0)\in K$ and $0<r<r_0$,  if $f \in \mathscr{L}^{q,\lambda}(B(x_0,r)\times (t_0-r^2,t_0))$ then $f\in C_\e^\lambda (B(x_0,r/M)\times (t_0-r^2/M^2,t_0))$.
\end{lemma}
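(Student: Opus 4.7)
The plan is to adapt the classical Campanato embedding theorem to the parabolic sub-Riemannian setting. The geometric foundation is that the parabolic cylinders $B_\e(x,r)\times(t-r^2,t)$ form a basis for a doubling quasi-metric structure on $Q$ with homogeneous dimension $N_1=2n+4$, and that the doubling constant is uniform in $\e$ thanks to the Gromov--Hausdorff stability already invoked in the proof of Theorem~\ref{moser}. Once this is set up, the standard dyadic telescoping argument will go through; the only subtlety is the asymmetric, backward-in-time shape of the cylinders, which is exactly what forces the dilation constant $M$ in the statement.

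First I would fix $(x_0,t_0)\in K$ and, for $r<r_0$ small enough that $B_\e(x_0,2r)\times(t_0-4r^2,t_0)\subset Q$, consider the dyadic sequence $C_k = B_\e(x_0,r2^{-k})\times(t_0-(r2^{-k})^2,t_0)$. A direct application of the Campanato condition \eqref{campanato-d} on $C_k$ and $C_{k+1}$, combined with H\"older's inequality to pass from $L^q$ to $L^1$ oscillation, gives a geometric estimate of the form $|f_{C_k}-f_{C_{k+1}}|\le C(r2^{-k})^\lambda$. The telescoping series is summable, so the averages $\{f_{C_k}\}$ are Cauchy and their limit $\tilde f(x_0,t_0)$ agrees with $f(x_0,t_0)$ at every Lebesgue point; after modification on a null set, $\tilde f$ is the desired pointwise representative, with the rate $|f_{C_k}-\tilde f(x_0,t_0)|\le C(r2^{-k})^\lambda$.

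The central step is then to compare $\tilde f(x_0,t_0)$ and $\tilde f(x_1,t_1)$ for nearby points, where the parabolic distance is $d\simeq d_\e(x_0,x_1)+|t_0-t_1|^{1/2}$. Here the backward orientation of the cylinders is the main technical obstacle: the cylinders $C_d^{(0)}$ and $C_d^{(1)}$ centered at the two points may fail to overlap if $t_1<t_0-d^2$, so one cannot directly average $f$ over a common set. The standard remedy is to introduce an enlarged cylinder $C^*$ of radius $Md$ centered slightly forward in time, chosen so that $C^*\supset C_d^{(0)}\cup C_d^{(1)}$ with $|C^*|\le K|C_d^{(i)}|$. Applying \eqref{campanato-d} on $C^*$ and using H\"older's inequality then gives $|f_{C_d^{(i)}}-f_{C^*}|\le Cd^\lambda$ for $i=0,1$, and combining with the first step produces $|\tilde f(x_0,t_0)-\tilde f(x_1,t_1)|\le Cd^\lambda$, which is the H\"older estimate with respect to the $g_\e$-parabolic distance.

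The hard part, as indicated above, is precisely the enlargement construction: near the boundary of $K$ one needs room to dilate cylinders by a fixed factor without leaving $Q$, which is why the conclusion is phrased on $B(x_0,r/M)\times(t_0-r^2/M^2,t_0)$ rather than on the original cylinder. The remaining bookkeeping is routine and is carried out in full generality in \cite{daprato}, requiring nothing beyond the doubling property and the definition \eqref{campanato-d}; in our setting uniformity in $\e$ follows from the $\e$-stability of the $g_\e$-doubling constants.
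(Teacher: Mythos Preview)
Your sketch is a correct outline of the classical Campanato embedding argument adapted to the parabolic, doubling, quasi-metric setting, and it is essentially what the paper is invoking. Note, however, that the paper does not supply its own proof of this lemma: it simply records it as ``a standard argument, see for instance \cite{daprato},'' so there is no in-paper proof to compare against---your write-up is in fact more detailed than what the paper provides.
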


Next, we return to the study of the regularity of horizontal derivatives of solutions.  By virtue of  Lemma \ref{diff-PDE} we recall that if  $u^\e$ is a solution of \eqref{approx1} in $Q$, if for a fixed $\ell=1,...,2n$ we set $v^\e = X^\e_\ell u^\e$, and $s_\ell = (-1)^{[\ell/n]}$, then the function $v^\e$ is  a  solution in $Q$ of
\begin{equation}\label{eq-X}
\p_t v^\e = \sum_{i=1}^{2n+1} X^\e_i\Big(  \sum_{j=1}^{2n+1}a^\e_{ij}(x,t) X^\e_j v^\e + a^\e_i(x,t)\Big) + a^\e(x,t), 
\end{equation}
where
$$a_{ij}^\e(x,t)=  A_{i, \xi_j}^\e (x, \nabla_\e u^\e)\in L^\infty_{loc}(Q), $$
$$a_i^\e(x,t)=  A_{i, x_\ell}^\e (x, \nabla_\e u^\e) - \frac{s_\ell x_{\ell+s_{\ell}n}}{2} A_{i, x_{2n+1}}^\e (x, \nabla_\e u^\e) \in L^\infty_{loc}(Q) ,$$
and
$$
a^\e(x,t)= s_\ell Z (A_{\ell+s_{\ell}n}^\e (x, \nabla_\e u^\e)),$$
with $$ |a^\e(x,t)| \le C |\nabla_\e Zu^\e|\in L^2_{loc}(Q)\cap M^{2,\alpha}(Q)$$
for some constant $C>0$ depending only on the structure constants and on $||u||_{L^{p,p}(Q)}.$
We need to invoke a standard result from the theory of Morrey-Campanato which adapts immediately to the Heisenberg group setting,  see \cite{morrey}, \cite{cap99}. In the statement of the next lemma we assume that $Q=\Omega\times (0,T)$ is a given cylinder, and that $[b_{ij}]_{i,j = 1}^{2n+1}$ is a uniformly elliptic matrix-valued function in $Q$, with coefficients in $L^\infty(Q)$. We also suppose that for some $\lambda \in (0,1)$ we are given functions $b_i\in M^{2,\lambda}(Q)$, and a function $b$ such that for each $2B\subset \Omega$ and $r>0$ sufficiently small, \begin{equation}\label{morrey-hyp}   r^{-N_1}\int_{ \min(t_0-r^2,0)}^{t_0}
\int_{B\cap \Omega}  |b| dx dt \le Cr^{\lambda-2} \end{equation} with $b\in L^{2N_1/(N_1+2)}_{loc}(Q)$, where we recall that $N_1=N+2=2n+4$ is the parabolic dimension with respect to the dilations $(x,t)\to (\delta_\lambda x,\lambda^2 t)$.

\begin{lemma}\label{morreycampanato} For each $\e\ge 0$, let $w\in  L^p((0,T), W_0^{1,p}(\Om))$ be a weak solution in $Q$  to the equation 
$$\p_t w = \sum_{i=1}^{2n+1} X_i^\e \bigg( \sum_{j=1}^{2n+1} b_{ij} (x,t) X_j^\e w + b_i(x,t)\bigg) + b(x,t).$$
Then, $|\nabla_\e w|\in M^{2,\lambda}\bigg(\frac{1}{2}B\times (t_0-(\frac{r}{2})^2, t_0) \bigg).$
\end{lemma}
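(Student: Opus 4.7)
The plan is to follow the classical freezing-plus-perturbation strategy of Campanato, adapted to the Riemannian approximation of $\Hn$, exactly in the spirit of \cite{morrey,cap99}. The target is a decay estimate of the form
\begin{equation*}
\int_{Q_\rho} |\nabla_\e w|^2 \;\le\; C\Bigl[(\rho/r)^{N_1} + \sigma(r)\Bigr]\int_{Q_r}|\nabla_\e w|^2 \;+\; C\,r^{N_1+2(\lambda-1)},
\end{equation*}
for $0<\rho\le r$ small, where $Q_r$ denotes a parabolic cylinder $B_\e(x_0,r)\times(t_0-r^2,t_0)$ and $\sigma(r)\to 0$ as $r\to 0^+$. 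Once this is established, a standard iteration (see e.g.\ Giaquinta or \cite{cap99}) converts the decay into the Morrey bound $|\nabla_\e w|\in M^{2,\lambda}$ on the half-cylinder.

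First I would freeze the leading coefficients. Fix $(x_0,t_0)$ and $r>0$ small with $Q_{2r}\subset Q$, and let $h$ solve the homogeneous constant-coefficient (in $x$) problem
\begin{equation*}
\p_t h = \sum_{i=1}^{2n+1} X_i^\e\!\Bigl(\sum_{j=1}^{2n+1} \bar b_{ij}(t)\,X_j^\e h\Bigr)\quad\text{in }Q_r,\qquad h=w\text{ on the parabolic boundary of }Q_r,
\end{equation*}
where $\bar b_{ij}(t)$ is the spatial average of $b_{ij}(\cdot,t)$ on $B_\e(x_0,r)$. Because $[\bar b_{ij}]$ is uniformly elliptic with $L^\infty$ entries, the linear theory for the approximating geometry $g_\e$ (and in particular the Cacciop\-poli and reverse-H\"older/De Giorgi-Nash estimates, which are stable in $\e$ by the results cited from \cite{CCR,ACCN}) yields the standard decay
\begin{equation*}
\int_{Q_\rho} |\nabla_\e h|^2 \;\le\; C\,(\rho/r)^{N_1}\!\int_{Q_r}|\nabla_\e h|^2,\qquad 0<\rho\le r.
\end{equation*}

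Next I would estimate the difference $\varphi := w-h$, which vanishes on the parabolic boundary of $Q_r$ and satisfies in weak form
\begin{equation*}
\p_t \varphi = \sum_i X_i^\e\!\Bigl(\sum_j \bar b_{ij}\,X_j^\e\varphi + \sum_j (b_{ij}-\bar b_{ij})X_j^\e w + b_i\Bigr) + b.
\end{equation*}
Testing against $\varphi$ itself, using ellipticity of $\bar b_{ij}$ and Young's inequality, gives
\begin{equation*}
\int_{Q_r}|\nabla_\e \varphi|^2 \;\le\; C\!\int_{Q_r}\!\bigl|b-\bar b\bigr|^2|\nabla_\e w|^2 + C\!\int_{Q_r} \sum_i |b_i|^2 + C\!\int_{Q_r} |b|\,|\varphi|,
\end{equation*}
where the last term is handled by the Sobolev-Poincar\'e embedding of Lemma \ref{lemma-sobolev} (applied to $\varphi$) combined with the hypothesis \eqref{morrey-hyp} on $b$ via H\"older's inequality with exponent $2N_1/(N_1+2)$; the middle term is controlled by $C\,r^{N_1+2(\lambda-1)}$ thanks to $b_i\in M^{2,\lambda}$. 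The first term contributes an $\e$-independent factor $\sigma(r)$ because the coefficients $b_{ij}=A^\e_{i\xi_j}(x,\nabla_\e u^\e)$ inherit uniform continuity from the H\"older continuity of $\nabla_\e u^\e$ on compacta (Theorem \ref{moser} gives local boundedness; continuity plus the structural bound \eqref{structure-epsilon} forces $\|b_{ij}-\bar b_{ij}\|_{L^\infty(Q_r)}\to 0$ as $r\to 0$).

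Combining $w=h+\varphi$ and the two estimates yields the desired decay; the standard iteration lemma for nonnegative superadditive set functions with a perturbation term $\sigma(r)$ then produces
\begin{equation*}
\int_{Q_\rho}|\nabla_\e w|^2 \;\le\; C\,\rho^{N_1+2(\lambda-1)},
\end{equation*}
uniformly for $(x_0,t_0)$ in the half-cylinder, which is exactly $|\nabla_\e w|\in M^{2,\lambda}$. The main obstacle I foresee is not the formal calculation but the \emph{uniformity in $\e$} of every constant: the decay estimate for $h$, the Sobolev-Poincar\'e constant, and the perturbation modulus $\sigma(r)$ must all be controlled independently of $\e$. This is where the Gromov-Hausdorff stability of $(M,g_\e)\to(M,\omega,g_0)$ alluded to in \cite{CM,CC,Koranyi}, together with the fact that $b_{ij}$ is already known to have an $\e$-independent modulus of continuity via Proposition \ref{Zu-reg} and Theorem \ref{moser}, is essential.
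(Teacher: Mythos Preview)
Your freezing-plus-perturbation outline is the classical Campanato template, but it has a genuine gap in this setting: the method requires a modulus of continuity for the leading coefficients $b_{ij}$ so that $\sigma(r)=\|b_{ij}-\bar b_{ij}\|_{L^\infty(Q_r)}\to 0$. The lemma, however, is stated (and is applied in the paper) under the sole hypothesis $b_{ij}\in L^\infty$. Your justification that ``$b_{ij}=A^\e_{i,\xi_j}(x,\nabla_\e u^\e)$ inherit uniform continuity from the H\"older continuity of $\nabla_\e u^\e$'' is circular: at the moment the lemma is invoked, only the $L^\infty$ bound on $\nabla_\e u^\e$ from Theorem~\ref{moser} is available, and the H\"older continuity of $\nabla_\e u^\e$ is precisely what the lemma is being used to establish. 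Even though each individual $u^\e$ is smooth, you need the modulus of continuity of $b_{ij}$ to be \emph{uniform in $\e$}, and that is exactly the content of the conclusion.

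The fix is to abandon freezing altogether and compare with the solution of the \emph{homogeneous} problem with the \emph{same} (merely bounded measurable) coefficients: let $h$ solve $\p_t h=\sum_i X_i^\e(\sum_j b_{ij}X_j^\e h)$ in $Q_r$ with $h=w$ on the parabolic boundary. The $\e$-stable De Giorgi--Nash--Moser theory of \cite{CCR,ACCN} gives $h\in C^{\alpha_0}$ for some $\alpha_0>0$ independent of $\e$; combining this with the Caccioppoli inequality for $h-h(x_0,t_0)$ yields the weaker (but sufficient) decay
\[
\int_{Q_\rho}|\nabla_\e h|^2 \le C\,(\rho/r)^{N_1-2+2\alpha_0}\int_{Q_r}|\nabla_\e h|^2.
\]
The difference $\varphi=w-h$ now satisfies $\p_t\varphi=\sum_i X_i^\e(\sum_j b_{ij}X_j^\e\varphi+b_i)+b$, with \emph{no} oscillation term $(b_{ij}-\bar b_{ij})X_j^\e w$, so testing with $\varphi$ and using $b_i\in M^{2,\lambda}$, \eqref{morrey-hyp}, and Lemma~\ref{lemma-sobolev} gives $\int_{Q_r}|\nabla_\e\varphi|^2\le Cr^{N_1+2(\lambda-1)}$ directly. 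The iteration then produces $|\nabla_\e w|\in M^{2,\mu}$ with $\mu=\min(\lambda,\alpha_0)$, which is what the application needs. This is the route taken in \cite{cap99} and is what the paper is citing.
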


We can now conclude the proof of the second part of Theorem \ref{main epsilon}. To begin, as we need to  apply  Lemma \ref{morreycampanato} to the linear equation  \eqref{eq-X},   we observe that  \eqref{Zu in Morrey} and H\"older inequality yield the needed hypothesis \eqref{morrey-hyp}.  At this point one can invoke Lemma \ref{morreycampanato} to conclude that for every $\ell=1,...,2n$ the function $\nabla_\e X_l^\e u^\e$ belongs locally to $M^{2,\lambda}$.  In view of the Poincar\'e inequality,  one then has that  $\nabla_\e u^\e$ belongs to the Campanato spaces $\mathscr{L}^{2,\lambda}$ and hence  by virtue of Lemma \ref{campanato-holder} it is H\"older continuous, concluding the proof.

\bigskip

\section{Proof of Theorem \ref{main1}}\label{S:main}

We will   need a simple form of the comparison principle, see \cite{Bieske1} and \cite{Bieske2}.
\begin{lemma}\label{comparison}
Let $u,w$ be weak solutions of \eqref{maineq} in a cylinder $B\times (t_1,t_2)$. If on the parabolic boundary $ B\times\{t_1\} \cup \p B \times (t_1,t_2)$ one has that $u\ge w$, then $u\ge w$ in $B\times (t_1,t_2)$.
\end{lemma}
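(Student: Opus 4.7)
The plan is to exploit the strict monotonicity of the map $\xi \mapsto (A_1(x,\xi),\ldots,A_{2n}(x,\xi))$ encoded in the first line of \eqref{structure}. Set $v = w-u$; subtracting the weak formulations \eqref{weak} for $w$ and $u$ one obtains, for every $\phi \in C^\infty_0(B\times(t_1,t_2))$,
\begin{equation*}
\int_{t_1}^{t_2}\int_B v\, \phi_t\, dx\, dt \;=\; \int_{t_1}^{t_2}\int_B \sum_{i=1}^{2n}\big(A_i(x,\nabla_0 w) - A_i(x,\nabla_0 u)\big)\, X_i\phi\, dx\, dt.
\end{equation*}
By a standard Steklov-averaging argument in time, this identity extends to test functions of the form $\phi(x,s) = v_+(x,s)\,\chi_{[t_1,t]}(s)$, where $v_+ = \max(v,0)$ and $t \in (t_1,t_2)$. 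The hypothesis $u \ge w$ on $B\times\{t_1\}\cup \p B\times(t_1,t_2)$ guarantees that $v_+(\cdot,s) \in W^{1,p}_0(B)$ for a.e.\ $s$ and that $v_+(\cdot,t_1)\equiv 0$, so no unwanted boundary contributions appear.

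For the spatial term I would rely on the elementary monotonicity inequality obtained from \eqref{structure} by integrating the Jacobian along the segment from $\nabla_0 u$ to $\nabla_0 w$:
\begin{equation*}
\sum_{i=1}^{2n}\big(A_i(x,\xi) - A_i(x,\eta)\big)(\xi_i-\eta_i) \;\ge\; \lambda\int_0^1 (\ddelta + |\eta+\tau(\xi-\eta)|^2)^{\frac{p-2}{2}}\,d\tau\;|\xi-\eta|^2 \;\ge\; 0.
\end{equation*}
Since $X_i v_+ = X_i v$ on $\{v>0\}$ and vanishes elsewhere, substituting $\xi = \nabla_0 w$ and $\eta = \nabla_0 u$ makes the spatial integral nonnegative. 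The time term, handled via Steklov averaging, evaluates to $\tfrac{1}{2}\int_B v_+^2(x,t)\,dx$ because $v_+(\cdot,t_1)\equiv 0$. Combining these two observations gives $\tfrac{1}{2}\int_B v_+^2(x,t)\,dx \le 0$ for every $t\in(t_1,t_2)$, hence $v_+ \equiv 0$, i.e.\ $u \ge w$ in $B\times(t_1,t_2)$.

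The main technical obstacle I anticipate is the rigorous justification of using $v_+$ as a test function, since \eqref{weak} only admits $\phi \in C^\infty_0(Q)$. This is handled in the usual manner: first replace $v$ by its Steklov average $v_h(x,s)=\tfrac{1}{h}\int_s^{s+h} v(x,\sigma)\,d\sigma$, so that $\p_s v_h$ is available in $L^p$ and the weak identity can be differentiated pointwise in $s$; then approximate $(v_h)_+$ in $W^{1,p}_0(B)$ by smooth compactly supported functions, using the boundary hypothesis to keep the support inside $B$; finally pass to the limit $h\to 0^+$ using the $L^p$-integrability of $\nabla_0 u$ and $\nabla_0 w$ together with dominated convergence. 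The procedure is routine but must be carried out with care because $u,w$ enjoy no a priori time regularity beyond $L^p$ in the functional framework of \eqref{weak}.
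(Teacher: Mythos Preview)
The paper does not actually supply a proof of this lemma; it merely cites \cite{Bieske1} and \cite{Bieske2} and states the result. Your argument---testing the difference of the two weak formulations against $(w-u)_+$ and exploiting the monotonicity of $\xi\mapsto A(x,\xi)$ furnished by the first line of \eqref{structure}---is precisely the standard energy proof of comparison for monotone parabolic operators, and it is correct. Your identification of the only genuine technical issue (justifying $(w-u)_+$ as an admissible test function via Steklov averaging and density, using the boundary hypothesis to place $(w-u)_+(\cdot,s)$ in $W^{1,p}_0(B)$) is accurate, and the outline you give for handling it is the standard one.
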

We now show how Theorem \ref{main1} follows from the comparison principle and from Theorem \ref{main epsilon}.

\begin{proof}[of Theorem \ref{main1}]
Recall from Lemma \ref{alpha} that  $u$ is H\"older continuous in any compact subdomain of $Q$, in particular in the closure of $B\times (t_1,t_2)$. For each $\e>0$ consider $u^\e$,  the  unique smooth solution of the quasilinear parabolic problem
\begin{equation}\label{ivp}
\Bigg\{ \begin{array}{ll}  \p_t u^\e  = \sum_{i=1}^{2n+1} X_i^\e A_i^\e (x,\nabla_\e u^\e), & \text{ in } B\times(t_1,t_2) \\
u^\e =u &\text{ in }  B\times\{t_1\} \cup \p B \times (t_1,t_2), \end{array}
\end{equation}
where $A_i^\e(x,\xi)$ satisfies the structure conditions \eqref{structure-epsilon}. By virtue of Theorem \ref{main-epsilon-lebesgue} and of the H\"older regularity from Theorem \ref{main epsilon}, one has that  for every $K\subset \subset Q$, and $q\ge 1$, there exist $M=M(p, q,\lambda,\Lambda,n,\delta)>0$  and $\al=\al(p, q,\lambda,\Lambda,n,\delta)\in (0,1)$, such that for every $\e>0$, $(x_0,t_0)\in K$ and $B(x_0,r)\times (t_0-r^2,t_0) \subset Q$, 
\begin{align*}
& ||\nabla_\e |\nabla_e u^\e|^q ||_{L^2(B(x_0,r)\times (t_0-r^2,t_0))}  \le M,
\\
& ||Z| \nabla_\e  u^\e|^q ||_{L^2(B(x_0,r)\times (t_0-r^2,t_0))}  \le M 
\\
& ||\nabla_\e u^\e||_{C^\al_\e (B(x_0,r)\times (t_0-r^2,t_0))} +
||Z  u^\e||_{C^\al_\e (B(x_0,r)\times (t_0-r^2,t_0))}   \le M.
\end{align*}
By the theorem of Ascoli-Arzel\`a, one can find  $u_0\in C^{1,\al}_{loc}(Q)$ and a sequence $\e_k\to 0$ such that 
$$u^{\e_k}\to u_0  \text{ and } \nabla_{\e_k} u^{\e_k} \to \nabla_0 u_0\ \text{ uniformly on compact subsets of Q.}$$

The latter implies that $u_0$ is a weak solution of \eqref{maineq}, in $B(x_0,r)\times (t_0-r^2,t_0)$, which agrees with the function $u$ on the parabolic boundary of $B(x_0,r)\times (t_0-r^2,t_0)$. By the comparison principle, the solution to this boundary values problem is unique, and hence we conclude that $u\in C^{1,\al}_{loc}(B(x_0,r)\times (t_0-r^2,t_0))$.

\end{proof}

\end{document}